\newtheorem*{thma}{Theorem~A}
\newtheorem*{thmb}{Theorem~B}
\newtheorem*{thmc}{Theorem~C}
\newtheorem*{thmd}{Theorem~D}
\newtheorem*{cora}{Corollary~A}
\newtheorem*{cord}{Corollary~D}
\newtheorem{thm}{Theorem}[section]
\newtheorem{fact}[thm]{Fact}
\newtheorem{lemma}[thm]{Lemma}
\newtheorem{cor}[thm]{Corollary}
\newtheorem{claim}{Claim}[thm]
\newtheorem{subclaim}{Subclaim}[claim]
\theoremstyle{definition}
\newtheorem{defn}[thm]{Definition}
\newtheorem{convention}[thm]{Convention}
\theoremstyle{remark}
\newtheorem{remark}[thm]{Remark}
\newcommand{\rud}{rud}
\DeclareMathOperator{\crit}{crit}
\DeclareMathOperator{\trcl}{trcl}
\DeclareMathOperator{\clps}{clps}
\DeclareMathOperator{\ord}{OR}
\DeclareMathOperator{\card}{Card}
\DeclareMathOperator{\cf}{cf}
\DeclareMathOperator{\otp}{otp}
\DeclareMathOperator{\reg}{Reg}
\DeclareMathOperator{\dom}{dom}
\DeclareMathOperator{\rng}{rng}
\DeclareMathOperator{\Dl}{Dl}
\DeclareMathOperator{\supp}{supp}
\renewcommand{\mid}{\mathrel{|}\allowbreak}
\renewcommand{\restriction}{\mathbin\upharpoonright}
\newcommand{\cN}{\mathcal{N}}
\newcommand{\cM}{\mathcal{M}}
\newcommand{\s}{\subseteq}
\newcommand{\forces}{\Vdash}
\newcommand*{\axiomfont}[1]{\textsf{\textup{#1}}}
\newcommand{\zf}{\axiomfont{ZF}}
\newcommand{\zfc}{\axiomfont{ZFC}}
\newcommand{\gch}{\axiomfont{GCH}}
\newcommand{\lcc}{\axiomfont{LCC}}
\title{On Local Club Condensation}
\author{Gabriel Fernandes}
\address{Department of Mathematics, Bar-Ilan University, Ramat-Gan 5290002, Israel.}
\urladdr{http://u.math.biu.ac.il/\textasciitilde zanettg}
\email{zanettg@macs.biu.ac.il}
\thanks{$^{*}$The author is funded by the European Research Council (grant
	agreement ERC-2018-StG 802756) as a postdoctoral fellow at Bar-Ilan
	University.}
\subjclass[2010]{03E45, 03E35, 03E55 }
\keywords{ local club condensation, extender models, square}
\begin{document}
\begin{abstract} We obtain results on the condensation principle called local club condensation. We prove that in extender models an equivalence between the failure of local club condensation and subcompact cardinals holds. This gives a characterization of $\square_{\kappa}$ in terms of local club condensation in extender models.  Assuming $\gch$, given an interval of ordinals $I$ we verify that iterating the forcing defined by Holy-Welch-Wu, we can preserve $\gch$, cardinals and cofinalities and obtain a model where local club condensation holds for  every ordinal in $I$ modulo those ordinals which cardinality is a singular cardinal. 
	We prove that if $\kappa$ is a regular cardinal in an interval $I$, the above iteration provides enough condensation for the combinatorial principle $\Dl_{S}^{*}(\Pi^{1}_{2})$, and in particular $\diamondsuit(S)$,  to hold for any stationary $S \subseteq \kappa$.
\end{abstract}
\date{\today}

\maketitle

\section{Introduction}


  \emph{Local club condensation} is a condensation principle that abstracts some of the condensation properties of  $L$, G\"odels constructible hierarchy. Local club condensation was first defined in \cite{FHl} and it is part of the outer model program which searches for forcing models that have $L$-like features.

  \begin{convention}
 	The class of ordinals is denoted by $\ord$. 
 	The transitive closure of a set $X$ is denoted by $\trcl(X)$,
 	and the Mostowski collapse of a structure $\mathfrak B$ is denoted by $\clps(\mathfrak B)$.
 \end{convention}

   In order to define condensation principles we define filtrations which is an abstraction of the stratfication $\langle L_{\alpha} \mid \alpha < \ord \rangle $
   of $L$.
  
  \begin{defn}
  	Given ordinals $\alpha < \beta$ we say that $\langle M_{\xi} \mid \alpha < \xi < \beta \rangle $ is a \emph{filtration} iff 
  	\begin{enumerate}
  		\item for all $\xi \in (\alpha,\beta)$, $M_{\xi}$ is transitive, $\xi \subseteq M_{\xi}$,
  		\item for all  $\xi \in (\alpha,\beta)$, $M_{\xi} \cap \ord = \xi $, 
  		\item for all  $\xi \in (\alpha,\beta)$, $ |M_{\xi}| \leq \max \{\aleph_0,|\xi|\}$,
  		\item if $\xi < \zeta$, then $M_{\xi} \subseteq M_{\zeta}$,
  		\item if $\xi$ is a limit ordinal, then $M_{\xi}=\bigcup_{\alpha < \xi} M_{\alpha}$. 
  	\end{enumerate}
  \end{defn}
  
 \begin{convention}\label{Union}
  Given a filtration $\langle M_{\xi} \mid \xi < \beta \rangle$, if $\beta$ is a limit ordinal we let $M_{\beta}:=\bigcup_{\gamma < \beta} M_{\gamma}$.
 \end{convention}
  
   The following is an abstract formulation of the Condensation lemma that holds for the constructible hirerarchy $\langle L_{\alpha} \mid \alpha \in \ord \rangle $:

  \begin{defn} Suppose that $\kappa$ and $\lambda$ are regular cardinals and that $ \vec{M} = \langle M_{\alpha} \mid \kappa < \alpha < \lambda \rangle $ is a filtration. We say that $\vec{M}$ satisfies \emph{strong condensation} iff for every $\alpha \in (\kappa,\lambda)$ and every $ (X,\in) \prec_{\Sigma_{1}} (M_{\alpha},\in)$ there exists $\bar{\alpha}$ such that $\text{clps}(X,\in) = (M_{\bar{\alpha}},\in)$.
  \end{defn}

   While strong condensation is not consistent with the existence of large cardinals, see \cite{FHl} and \cite{schvlck},   Local club condensation, which we define below, is consistent with any large cardinal, see \cite[Theorem1]{FHl}.

\begin{defn}[Holy,Welch,Wu,Friedman \cite{HWW},\cite{FHl}]	 \label{LCCupto}
	Let $ \kappa $ be a cardinal of uncountable cofinality.
	We say that $\vec{M}=\langle M_\beta \mid \beta < \kappa \rangle $ is a witnesses to the fact that \emph{local club condensation holds in $(\eta,\zeta)$},
	and denote this by $\langle H_{\kappa},{\in}, \vec M\rangle \models \lcc(\eta,\zeta)$,
	iff all of the following hold true:
	\begin{enumerate}
		\item $\eta < \zeta \leq \kappa+1$;
		\item $\vec M$ is a \emph{ filtration} such that $M_{\kappa}= H_\kappa$ \footnote{See Convention \ref{Union}},
		\item For every ordinal $\alpha$ in the interval $(\eta,\zeta)$ and every sequence $\mathcal{F} = \langle (F_{n},k_{n}) \mid n \in \omega \rangle$ such that, for all $n \in \omega$, $k_{n} \in \omega$ and $F_{n} \subseteq (M_{\alpha})^{k_{n}}$, there is a sequence 
		$\vec{\mathfrak{B}} = \langle \mathcal{B}_{\beta} \mid \beta < |\alpha| \rangle $ having the following properties: 
		\begin{enumerate}
			\item for all $\beta<|\alpha|$, $\mathcal{B}_{\beta}$ is of the form $\langle B_{\beta},{\in}, \vec{M} \restriction (B_{\beta} \cap\ord),  (F_n\cap(B_\beta)^{k_n})_{n\in\omega} \rangle$;
			\item for all $\beta<|\alpha|$, $\mathcal{B}_{\beta} \prec \langle M_{\alpha},{\in}, \vec{M}\restriction \alpha, (F_n)_{n\in\omega} \rangle$;\footnote{Note that the case $ \alpha= \kappa $ uses Convention~\ref{Union}.}
			\item for all $\beta<|\alpha|$, $\beta\s B_\beta$ and  $|B_{\beta}| < |\alpha|$;
			\item for all $\beta < |\alpha|$, there exists $\bar{\beta}<\kappa$ such that
			$$\clps(\langle B_{\beta},{\in}, \langle B_{\delta} \mid \delta \in B_{\beta}\cap\ord \rangle \rangle) = \langle M_{\bar{\beta}},{\in}, \langle M_{\delta} \mid \delta \in \bar{\beta} \rangle \rangle;$$
			\item $\langle B_\beta\mid\beta<|\alpha|\rangle$ is $\s$-increasing, continuous and converging to $M_\alpha$.
		\end{enumerate}
	\end{enumerate}
	For $\vec{\mathfrak{B}}$ as in Clause~(3) above we say that 
	\emph{$\vec{\mathfrak{B}}$ witnesses $\lcc(\eta,\zeta)$ at $\alpha$ with respect to $\mathcal{F}$}.
	We write $\lcc(\eta,\zeta]$ for $\lcc(\eta,\zeta+1)$.
\end{defn} 

 In section 2 we present our resutls regarding Local Club Condensation in extender models.
 An \emph{extender model} is an inner model of the form $L[E]$, it is a generalization of $L$ that can accommodate large  cardinals.  An inner model of the form $L[E]$ is the smallest transitive proper class that is a model of $\zf$ and is closed under the operator $x \mapsto x \cap E$, where $E:\ord \rightarrow V$ and each $E_{\alpha} = \emptyset $ or $E_{\alpha}$ is a partial extender. $L[E]$ models can  be stratified using the $L$-hirearchy and the $J$-hirearchy, for example: 
 \begin{itemize}
 	\item $J_{\empty}^{E}=\emptyset$,
 	\item $J_{\alpha+1}^{E}= \rud_{E}(J_{\alpha}^{E}\cup\{J_{\alpha}^{E}\})$,
 	\item $J_{\gamma}^{E}=\bigcup_{\beta < \gamma}J_{\beta}^{E}$ if $\gamma$ is a limit ordinal.
 \end{itemize}
 and finally $$L[E] = \bigcup_{\alpha \in \ord}J_{\alpha}^{E}.$$
 
 In \cite[Theorem 8]{FHl} it is shown that Local Club Condensation holds in various extender models,  we extend \cite[Theorem~8]{FHl} to an optimal result for extender models that are weakly iterable (see Defnition \ref{weaklyit}). We carachterize Local club condensation in extender models in terms of subcompact cardinals\footnote{A subcompact cardinal is a large cardinal that is located in the consistency strengh hirearchy below a supercompact cardinal and above a superstrong cardinal. See definition in \cite{SquareinK}}.

\begin{thma} \label{NoSubcompact}
	If $L[E]$ is an extender model that is weakly iterable, then given an infinite cardinal $\kappa$ the following are equivalent:
	\begin{itemize}
		\item[(a)] $\langle L_{\kappa^{+}}[E],{\in},\langle L_\beta[E]\mid\beta\in\kappa^{+} \rangle\rangle\models\lcc(\kappa^{+},\kappa^{++}]$.
		\item[(b)] 	$L[E] \models ( \kappa ~ \text{is not a subcompact cardinal})$. 
	\end{itemize}
	In addtion for every  limit cardinal $\kappa$ with $\cf(\kappa)>\omega$ we have \begin{center}$\langle L_{\kappa^{+}}[E],{\in},\langle L_\beta[E]\mid\beta\in\kappa^{+} \rangle\rangle\models\lcc(\kappa,\kappa^{+}].$\end{center}
	\end{thma}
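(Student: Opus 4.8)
The plan is to prove $(a)\Leftrightarrow(b)$ by a fine-structural analysis of cofinal chains of Skolem hulls inside $L[E]$, using the Condensation Lemma for weakly iterable extender models together with the Schimmerling--Zeman analysis of subcompactness in such models \cite{SquareinK}; the characterization of $\square_\kappa$ mentioned in the abstract then follows by combining Theorem~A with the fact, also in \cite{SquareinK}, that $\square_\kappa$ holds in $L[E]$ if and only if $\kappa$ is not subcompact there. The substantive direction is $(b)\Rightarrow(a)$, which extends \cite[Theorem~8]{FHl}. Given $\alpha\in(\kappa^{+},\kappa^{++}]$ and a predicate sequence $\mathcal F$, I would construct the required witness $\vec{\mathfrak B}=\langle\mathcal B_\beta\mid\beta<|\alpha|\rangle$ by recursion on $\beta$, taking $B_\beta$ to be a Skolem hull in $\langle L_\alpha[E],{\in},\langle L_\delta[E]\mid\delta<\alpha\rangle,\mathcal F\rangle$ of $\beta$ together with (i) a bookkeeping set of parameters forcing $\bigcup_\beta B_\beta=L_\alpha[E]$, and, crucially, (ii) enough ordinals so that $B_\beta$ is cofinal in the generators of every active extender on the $E$-sequence whose index belongs to $B_\beta$. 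Clauses (a)--(c) and (e) of Definition~\ref{LCCupto} are then routine to arrange (continuity at limits is forced, $\beta\subseteq B_\beta$ and $|B_\beta|<|\alpha|$ are built in, and convergence is the bookkeeping).

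The heart of the matter is Clause~(d): that the transitive collapse of $\langle B_\beta,{\in},\langle L_\delta[E]\mid\delta\in B_\beta\cap\ord\rangle\rangle$ is $\langle L_{\bar\beta}[E],{\in},\langle L_\delta[E]\mid\delta<\bar\beta\rangle\rangle$, equivalently that $\clps(L_\delta[E]\cap B_\beta)=L_{\pi_\beta(\delta)}[E]$ for \emph{every} $\delta\in B_\beta\cap\ord$, where $\pi_\beta$ is the inverse collapse. This is where the Condensation Lemma and the absorption (ii) combine, and where the hypothesis on $\kappa$ enters: absorbing the generators of an active extender $E_\delta$ with $\delta\in B_\beta$ costs $|\crit(E_\delta)^{+L[E]}|$ many ordinals, which is within the bound $|B_\beta|<|\alpha|$ exactly when $\crit(E_\delta)^{+L[E]}<|\alpha|$; and when $\kappa$ is not subcompact in $L[E]$, the Schimmerling--Zeman analysis shows that every active extender with index below $\kappa^{++}$ whose generators cannot be absorbed in this way is of ``strong'' type, so that the truncation forced on it by $B_\beta$ is again (an initial segment of) an extender on the $E$-sequence by coherence of $E$, and the Condensation Lemma still returns $L_{\pi_\beta(\delta)}[E]$. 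The extenders witnessing subcompactness of $\kappa$ are precisely the ones for which neither alternative holds --- their generators reach cofinally into $\kappa^{+}$, so cannot be absorbed within $|B_\beta|<|\alpha|=\kappa^{+}$, and they do not cohere down to the $E$-sequence --- and $(b)$ excludes them. With this in place the good case of the Condensation Lemma applies uniformly at every intermediate level, giving Clause~(d) and hence $(a)$.

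For $(a)\Rightarrow(b)$ I argue contrapositively. If $\kappa$ is subcompact in $L[E]$, then (by the standard extraction of a square sequence from an $L$-like filtration, as in \cite{HWW} and \cite{FHl}) the LCC witness in $(a)$ yields a $\square_\kappa$-sequence, contradicting the classical fact that subcompact cardinals refute $\square_\kappa$ (via stationary reflection at $\kappa^{+}$). Alternatively, one sees directly from the preceding analysis that a subcompactness extender for $\kappa$ --- which has index $\nu<\kappa^{++}$, generators cofinal in $\kappa^{+}$, and does not cohere down to $E$ --- cannot be accommodated: for any witness for $\lcc(\kappa^{+},\kappa^{++}]$ at an $\alpha>\nu$ we have $\nu\in B_\beta$ for all large $\beta$, the hull $B_\beta$ must truncate that extender (its generators have cardinality $\kappa^{+}>|B_\beta|$), and the cut points realized along the chain form a club in $\kappa^{+}$ while the cut points at which the extender would truncate back onto $E$ are non-stationary, so some $B_\beta$ violates Clause~(d).

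The additional clause is handled by the same construction applied to $\alpha\in(\kappa,\kappa^{+}]$ with witnesses of length $|\alpha|\in\{\kappa,\kappa^{+}\}$; here no restriction on large cardinals below $\kappa$ is needed, because every active extender visible to a hull of $L_\alpha[E]$ has index $<\kappa^{+}$, hence critical point $<\kappa$ and generators of cardinality $<|\alpha|$, so its generators are absorbable (and, for subcompactness extenders of cardinals $\mu<\kappa$, their support has cardinality $\mu^{+}<\kappa$ and is likewise absorbable) --- and no subcompactness extender for $\kappa$ itself has index below $\kappa^{+}$, so none is visible at this level. The hypothesis $\cf(\kappa)>\omega$ is used only so that the chains of length $\kappa$ can be continuous and cofinal, as Definition~\ref{LCCupto} requires. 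The main obstacle throughout is the fine-structural bookkeeping in $(b)\Rightarrow(a)$: running the recursion so that absorption is compatible with $\beta\subseteq B_\beta$, continuity, convergence and the size bound at once; verifying that, with absorption in force, the Condensation Lemma returns initial segments of $\langle L[E],\langle L_\delta[E]\rangle\rangle$ --- rather than ultrapowers or protomice --- at \emph{every} level $L_\delta[E]\cap B_\beta$, which requires carrying along soundness, standard parameters and solidity witnesses so that its hypotheses hold; and pinning down that the unique obstruction is a subcompactness witness for $\kappa$. Getting the right endpoint exact --- so that $\lcc(\kappa^{+},\kappa^{++}]$ (with $\alpha=\kappa^{++}$ included), not merely $\lcc(\kappa^{+},\kappa^{++})$, is witnessed --- is a further but routine point.
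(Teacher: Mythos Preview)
Your proposal misidentifies where the obstruction to condensation lies. The issue is not with extenders $E_\delta$ whose index $\delta$ belongs to the hull $B_\beta$ --- those collapse correctly under the transitive collapse, and no ``absorption of generators'' is needed for Clause~(d). The genuine obstruction sits at the \emph{critical point} $\gamma := B_\beta \cap \kappa^{+}$ of the collapse map. The collapsed structure $\mathcal N_\gamma$ believes $\gamma$ is a cardinal (being the image of $\kappa^{+}$), so its predicate at $\gamma$ is empty; but if the real sequence has $E_\gamma \neq \emptyset$, then already $L_{\gamma+1}[E]$ contains a surjection of $\lambda(E_\gamma)$ onto $\gamma$, and hence $\mathcal N_\gamma$ cannot equal $L_{\bar\beta}[E]$ for any $\bar\beta > \gamma$. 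Since $\gamma \notin B_\beta$, this extender is entirely external to the hull and cannot be ``absorbed''; your recursion never touches it. Your claimed use of Schimmerling--Zeman --- that non-subcompactness forces the residual bad extenders to be ``of strong type'' and to cohere down under truncation --- is not what their theorem says and does not speak to this obstruction at all.

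The paper's argument runs through the critical point directly. One passes to some $\mathcal M = J_\beta^E$ with $\rho_1(\mathcal M) = \kappa^{+}$ containing the given $\alpha$ and the predicates $\mathcal F$, takes the hulls $h_1^{\mathcal M}[\gamma \cup \{p_1^{\mathcal M}\}]$ for $\gamma<\kappa^{+}$, and proves two lemmas: on a club of $\gamma$ the collapse $\mathcal N_\gamma$ has $\rho_1(\mathcal N_\gamma) = \gamma$; and under that hypothesis, $\mathcal N_\gamma \triangleleft \mathcal M$ if and only if $E_\gamma = \emptyset$ (via a phalanx comparison). The Schimmerling--Zeman input is then precisely the dichotomy that $\{\gamma < \kappa^{+} \mid E_\gamma = \emptyset\}$ contains a club iff $\kappa$ is not subcompact: on the club one gets $\mathcal N_\gamma\triangleleft\mathcal M$ and Clause~(d) follows; on the stationary complement, the cardinality mismatch above kills Clause~(d) for \emph{every} continuous chain, giving $\neg(b)\Rightarrow\neg(a)$ without a detour through $\square_\kappa$. (Your proposed detour is in any case circular here: the $\lcc$--$\square_\kappa$ equivalence is the paper's Corollary~A, derived \emph{from} Theorem~A, and there is no general ``extraction of a $\square_\kappa$-sequence from an $\lcc$ witness'' of the sort you invoke.) For the limit-cardinal clause the same mechanism applies with the club $\{\mu < \kappa \mid \mu \text{ is a cardinal}\}$, since $E_\mu = \emptyset$ at every cardinal; the discussion of absorbing generators of extenders indexed below $\kappa^{+}$ is again beside the point.
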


We warn the reader that it is not known how to construct an extender model that is weakly iterable and has a subcompact cardinal, but this is part of the aim of the inner model theory program and it is desirable to know what hold in such models. 	
	
Corollary~A provides an equivalence between $\square_{\kappa}$ and a condensation principle that holds in the interval $(\kappa^{+},\kappa^{++})$, Corollary~ A is immediate from Theorem~A and the main result in \cite{MR2081183}:
	
\begin{cora}  If $L[E]$ is an extender model with Jensen's $\lambda$-indexing that is weakly iterable, then given $\kappa$, an $L[E]$ cardinal, the following are equivalent:
\begin{itemize}
	\item[(a)] $L[E]\models \square_{\kappa}$ 
	\item[(b)] $\langle L_{\kappa}[E], \in, \langle L_{\beta}[E] \mid \beta < \kappa^{+} \rangle \rangle \models \lcc(\kappa^{+},\kappa^{++})$
\end{itemize}	\end{cora}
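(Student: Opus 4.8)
\emph{Proof sketch.} The plan is to obtain the Corollary by composing two equivalences. On one side is Theorem~A: under the standing hypothesis that $L[E]$ is weakly iterable, local club condensation holds in the interval $(\kappa^{+},\kappa^{++}]$ for the filtration $\langle L_{\beta}[E]\mid\beta<\kappa^{++}\rangle$ on $L_{\kappa^{++}}[E]$ if and only if $\kappa$ is not subcompact in $L[E]$. On the other side is the characterization of Jensen's square principle $\square_{\kappa}$ in $\lambda$-indexed extender models due to Schimmerling and Zeman in \cite{MR2081183}: for exactly such an $L[E]$ one has $L[E]\models\square_{\kappa}$ if and only if $\kappa$ is not subcompact in $L[E]$. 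The two hypotheses imposed in the Corollary --- Jensen's $\lambda$-indexing together with weak iterability --- are precisely what each of these results requires, so both apply to our $L[E]$, and their conjunction reads
\[
L[E]\models\square_{\kappa}\quad\iff\quad\kappa\text{ is not subcompact in }L[E]\quad\iff\quad\lcc(\kappa^{+},\kappa^{++}]\text{ holds on }L_{\kappa^{++}}[E].
\]

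It then remains only to reconcile the half-open interval $(\kappa^{+},\kappa^{++}]$ of Theorem~A with the open interval $(\kappa^{+},\kappa^{++})$ appearing in the Corollary. The implication (a)$\Rightarrow$(b) is immediate, since $\lcc(\kappa^{+},\kappa^{++}]$ is by definition a strengthening of $\lcc(\kappa^{+},\kappa^{++})$, asking additionally for condensation at the single extra level $\alpha=\kappa^{++}$. For (b)$\Rightarrow$(a) I would argue by contraposition: if $L[E]\models\neg\square_{\kappa}$ then $\kappa$ is subcompact in $L[E]$, and I would invoke not merely the statement but the proof of Theorem~A, which in the subcompact case exhibits a concrete ordinal $\alpha\in(\kappa^{+},\kappa^{++})$ --- a level strictly below $\kappa^{++}$ detecting the failure of $\square_{\kappa}$, equivalently a point displaced by a subcompactness embedding --- at which local club condensation is violated. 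Hence $\lcc(\kappa^{+},\kappa^{++})$ already fails, that is, (b) fails. In particular, in this setting $\lcc(\kappa^{+},\kappa^{++})$ and $\lcc(\kappa^{+},\kappa^{++}]$ are equivalent, both amounting to the non-subcompactness of $\kappa$ in $L[E]$.

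I expect no genuine obstacle: all of the mathematical substance is already packaged in Theorem~A, while \cite{MR2081183} is used purely as a black box for the dictionary between $\square_{\kappa}$ and subcompactness. The only step calling for a moment's care is the open-versus-closed interval bookkeeping indicated above, and it is harmless precisely because the breakdown of local club condensation forced by a subcompact cardinal already shows up strictly below $\kappa^{++}$.
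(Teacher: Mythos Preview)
Your proposal is correct and matches the paper's own argument: the paper states that Corollary~A is immediate from Theorem~A together with the Schimmerling--Zeman characterization in \cite{MR2081183}, which is exactly the composition of equivalences you describe. Your extra care with the open-versus-closed interval is appropriate and your resolution is right---in the proof of Theorem~A the failure of $\lcc$ in the subcompact case is exhibited at an arbitrary $\alpha\in(\kappa^{+},\kappa^{++})$, so the open interval already suffices.
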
	

We verify that a subcompact cardinal is an even more severe impediment for $\lcc$ to hold:

\begin{thmb} Suppose $L[E]$ is an extender model with Jensen's $\lambda$-indexing such that every countable elementary submodel of $L[E]
	$ is $(\omega_{1}+1,\omega_{1})$-iterable. In $L[E]$, if an ordinal $\kappa$ is a subcompact cardinal, then there is no $\vec{M}$ such that $\langle M_{\kappa^{++}}, \in , \vec{M} \rangle \models \lcc(\kappa^{+},\kappa^{++})$ and $M_{\kappa^{++}}=H_{\kappa^{++}}^{L[E]}$ and $M_{\kappa^{+}}=H_{\kappa^{+}}^{L[E]}$.
\end{thmb}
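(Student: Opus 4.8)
The plan is to argue by contradiction, working throughout inside $L[E]$; so suppose $\kappa$ is subcompact and some filtration $\vec{M}=\langle M_{\beta}\mid\beta<\kappa^{++}\rangle$ witnesses $\lcc(\kappa^{+},\kappa^{++})$ with $M_{\kappa^{+}}=H_{\kappa^{+}}^{L[E]}$ and $M_{\kappa^{++}}=H_{\kappa^{++}}^{L[E]}$. The first step is to extract from $\lcc$ a coherent system of condensation maps that reaches all the way down to $H^{L[E]}_{\kappa^{+}}$. Fix a suitably large $\alpha\in(\kappa^{+},\kappa^{++})$ and apply Clause~(3) of Definition~\ref{LCCupto} at $\alpha$ to a predicate sequence $\mathcal F$ coding a well-ordering of $M_{\alpha}$, the $E$-sequence restricted to $M_{\alpha}$, and the parameter $\kappa^{+}$, getting $\vec{\mathfrak B}=\langle\mathcal B_{\beta}\mid\beta<\kappa^{+}\rangle$. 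Since $|B_{\beta}|\le\kappa$, the collapse ordinal $\bar\beta$ of Clause~(3)(d) satisfies $\bar\beta<\kappa^{+}$, so every such collapse lands inside $\vec M\restriction\kappa^{+}$. Shrinking to a club $D\s\kappa^{+}$ one may assume $B_{\beta}\cap\kappa^{+}=\beta$ for $\beta\in D$; writing $\sigma_{\beta}\colon M_{\bar\beta}\to M_{\alpha}$ for the anticollapse, one checks directly from Clauses~(3)(b),(d) that $\sigma_{\beta}$ is elementary on the predicated structures, that $\crit(\sigma_{\beta})=\beta$, that $\sigma_{\beta}(\beta)=\kappa^{+}$, and --- crucially --- that $\sigma_{\beta}^{-1}(M_{\kappa^{+}})$ is exactly the filtration level $M_{\beta}$, so $\sigma_{\beta}\restriction M_{\beta}\colon M_{\beta}\to H^{L[E]}_{\kappa^{+}}$ is elementary. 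By the $\s$-increase and continuity of Clause~(3)(e) these maps commute with the natural connecting maps $\tau_{\beta'\beta}\colon M_{\bar\beta'}\to M_{\bar\beta}$ ($\beta'\le\beta$ in $D$), so $\langle M_{\beta},\,\tau_{\beta'\beta}\restriction M_{\beta'}\mid\beta'\le\beta\in D\rangle$ is a directed system of the filtration levels themselves, with limit maps $\sigma_{\beta}\restriction M_{\beta}$ and direct limit $H^{L[E]}_{\kappa^{+}}$.

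The second step pits this against subcompactness. Let $A\s H^{L[E]}_{\kappa^{+}}$ code a well-ordering of $H^{L[E]}_{\kappa^{+}}$, the sequences $\vec M\restriction\kappa^{+}$ and $E\cap H^{L[E]}_{\kappa^{+}}$, the club $D$, and the part of the system $\langle M_{\bar\beta},\sigma_{\beta},\tau_{\beta'\beta}\rangle$ lying in $H^{L[E]}_{\kappa^{+}}$; with $\alpha$ chosen large enough the missing pieces of the $\sigma_{\beta}$ are recoverable inside $H^{L[E]}_{\kappa^{+}}$ from $E$ and the fine structure, and this is where the hypothesis that countable submodels of $L[E]$ are iterable enters (it is what makes the relevant hulls of $L[E]$ condense to initial segments of $L[E]$). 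Applying the subcompactness of $\kappa$ to $A$ yields $\bar\kappa<\kappa$, $\bar A\s H^{L[E]}_{\bar\kappa^{+}}$ and elementary $\pi\colon(H^{L[E]}_{\bar\kappa^{+}},{\in},\bar A)\to(H^{L[E]}_{\kappa^{+}},{\in},A)$ with $\crit(\pi)=\bar\kappa$ and $\pi(\bar\kappa)=\kappa$; since $\pi\restriction H^{L[E]}_{\bar\kappa}=\mathrm{id}$, $\bar A$ is the $\bar\kappa$-version of all of this data, agreeing with the original below $\bar\kappa$. Now the coherent $\lcc$-system of Step~1 and the embedding $\pi$ together over-determine $H^{L[E]}_{\kappa^{+}}$: tracing critical points through the $\tau$-system one reads off, for club-many limit $\gamma<\kappa^{+}$, a set $C_{\gamma}\s\gamma$, and the commutativity from Step~1 together with the agreement of $\pi$ with that system below $\bar\kappa$ forces $\langle C_{\gamma}\mid\gamma<\kappa^{+}\text{ limit}\rangle$ to be a $\square_{\kappa}$-sequence --- the endpoint hypotheses $M_{\kappa^{+}}=H^{L[E]}_{\kappa^{+}}$, $M_{\kappa^{++}}=H^{L[E]}_{\kappa^{++}}$ and the fine-structural condensation from iterability being exactly what ensure the $C_{\gamma}$ are genuine clubs of order type $\le\kappa$ that cohere. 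But a subcompact cardinal carries no $\square$-sequence \cite{MR2081183}, a contradiction.

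The step I expect to be the main obstacle is the last one: converting the abstract condensation data of $\lcc(\kappa^{+},\kappa^{++})$ into a bona fide coherent square sequence --- equivalently, proving that $\lcc(\kappa^{+},\kappa^{++})$ with these endpoints implies $\square_{\kappa}$ in $L[E]$. It is here that one truly needs the filtration anchored to $L[E]$ at the endpoints, and here that iterability of countable submodels is indispensable, since iterability is what forces the collapses $M_{\bar\beta}$, and the structures one meets once $E$ is fed into $\mathcal F$, to look like initial segments of $L[E]$ and hence to cohere across $\beta$. An alternative to exhibiting $\square_{\kappa}$ outright is to run Jensen's reflection argument for $\neg\square$ directly on the $\lcc$-system through $\pi$ --- take $\delta=\sup\pi[\bar\kappa^{+}]$ and derive a contradiction at $\delta$ from coherence of the system together with $\crit(\pi)=\bar\kappa$ --- but in either presentation the coherence supplied by $\lcc$ has to be reconciled with the critical point of the subcompactness embedding, and that reconciliation is the crux.
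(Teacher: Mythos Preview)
Your proposal has a real gap exactly where you yourself flag it. You never specify how the sets $C_\gamma$ are to be read off from the directed system $\langle M_\beta,\sigma_\beta,\tau_{\beta'\beta}\rangle$, nor why commutativity together with the subcompactness embedding $\pi$ should force coherence with order types $\le\kappa$. The role you assign to $\pi$ in \emph{constructing} the $C_\gamma$ is particularly puzzling: subcompactness is what \emph{refutes} $\square_\kappa$, so using its witnessing embedding as an ingredient in building a $\square_\kappa$-sequence is the wrong way round. Your reflection alternative at $\delta=\sup\pi[\bar\kappa^{+}]$ is likewise only gestured at; it is not clear what plays the role of $C_\delta$ when all you have extracted from $\lcc$ is a directed system of elementary maps rather than a coherent club sequence. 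Step~1 is fine, but the argument stops there.

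The paper's route is much shorter and never builds a $\square$-sequence. The key observation is that any filtration $\vec M$ with $M_{\kappa^{+}}=H_{\kappa^{+}}$ and $M_{\kappa^{++}}=H_{\kappa^{++}}$ must agree with the canonical filtration $\langle L_\beta[E]\mid\beta<\kappa^{++}\rangle$ on a club, both in $\kappa^{+}$ and in $\kappa^{++}$. So it suffices to show that the $L[E]$-filtration fails $\lcc(\kappa^{+},\kappa^{++})$ \emph{robustly}: for every $\alpha\in(\kappa^{+},\kappa^{++})$ and every continuous chain $\langle B_\gamma\mid\gamma<\kappa^{+}\rangle$ of elementary substructures of $L_\alpha[E]$, stationarily many collapses $\clps(B_\gamma)$ are not $L[E]$-levels. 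That is precisely the $\neg(b)\Rightarrow\neg(a)$ direction of Lemma~\ref{NoSubcompact}: by Schimmerling--Zeman, subcompactness of $\kappa$ makes $\{\gamma<\kappa^{+}\mid E_\gamma\ne\emptyset\}$ stationary; for such $\gamma$ with $B_\gamma\cap\kappa^{+}=\gamma$ the collapse thinks $\gamma$ is a cardinal (as the image of $\kappa^{+}$), whereas $L_{\gamma+1}[E]$ already contains a surjection from $\lambda(E_\gamma)<\gamma$ onto $\gamma$, so no $L_\zeta[E]$ with $\zeta>\gamma$ can match. To transfer this to an arbitrary $\vec M$, choose $\alpha$ in the club where $M_\alpha=L_\alpha[E]$; the map $\gamma\mapsto\bar\gamma$ is continuous, increasing and cofinal in $\kappa^{+}$, so club-many $\bar\gamma$ lie in the club where $M_{\bar\gamma}=L_{\bar\gamma}[E]$, and intersecting with the stationary set of bad $\gamma$ gives the contradiction. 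The only input from \cite{MR1860606} is the stationarity of the set of extender indices below $\kappa^{+}$.
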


 In section 3 we prove how to force local club condensation on a given interval of ordinals $I$ modulo ordinals with singular cardinality (Theorem~C) . It was already obtained in \cite{FHl} a model where local club condensation holds on arbritrary intervals $I$ including ordinals with singular cardinality, this was done via class forcing, although we do not obtain as much condensation as in \cite{FHl}, building on \cite{HWW} we define a set forcing $\mathbb{P}$ which is simpler than the forcing in \cite{FHl}, and which will force enough condensation for a few applications, see section 4.


\begin{thmc} 
	If $\gch$ holds and $\kappa$ is a regular cardinal and $\alpha$ is an ordinal, then there is a set forcing $\mathbb{P}$ which is $<\kappa$-directed closed and $\kappa^{+\alpha+1}$-cc, $\gch$ preserving such that in $V^{\mathbb{P}}$ there is a filtration $\langle M_{\alpha} \mid \alpha < \kappa^{+\alpha+1} \rangle $ such that for every regular cardinal $ \theta \in [\kappa,\kappa^{+\alpha+1}]$ we have $H_{\theta}=M_{\theta}$ and $\langle M_{\alpha}\mid \alpha < \kappa^{+\alpha}\rangle \models \lcc_{\reg}(\kappa,\kappa^{+\alpha}) $.
\end{thmc}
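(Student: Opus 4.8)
The plan is to realize $\mathbb{P}$ as an iteration of the Holy--Welch--Wu forcing from \cite{HWW}, running through the regular cardinals of the interval $[\kappa,\kappa^{+\alpha})$, each factor relativised to a predicate for the filtration fixed below it, and with nothing essential forced elsewhere. Concretely I would build an iteration $\langle\mathbb{P}_{\xi},\dot{\mathbb{Q}}_{\xi}\mid\xi\le\alpha\rangle$ so that $\dot{\mathbb{Q}}_{\xi}$ names, whenever $\kappa^{+\xi}$ is a regular cardinal, the Holy--Welch--Wu forcing for $\kappa^{+\xi}$ relative to $\vec M\restriction\kappa^{+\xi}$ (the concatenation of the blocks built by $\mathbb{P}_{\xi}$), whose generic is a filtration $\langle M_{\beta}\mid\kappa^{+\xi}\le\beta<\kappa^{+\xi+1}\rangle$ of $H_{\kappa^{+\xi+1}}$ with bottom level $H_{\kappa^{+\xi}}$; and $\dot{\mathbb{Q}}_{\xi}$ names the trivial forcing otherwise, in which case one fixes, by recursion in $V^{\mathbb{P}_{\xi}}$, a $\gch$-definable filtration of $H_{\kappa^{+\xi+1}}$ with bottom level $H_{\kappa^{+\xi}}$ --- such a filtration exists outright, since $\gch$ will be maintained and no $\lcc_{\reg}$-demand is made on ordinals of singular cardinality. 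The block $[\kappa^{+\alpha},\kappa^{+\alpha+1})$ is handled the same way, and below $\kappa$ one inserts any fixed filtration of $H_{\kappa}$. The desired object is the concatenation $\vec M=\langle M_{\beta}\mid\beta<\kappa^{+\alpha+1}\rangle$ read off a $\mathbb{P}=\mathbb{P}_{\alpha}$-generic.

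I would use the single-cardinal forcing of \cite{HWW} as a black box, needing only: for a regular $\theta$ and a filtration $\vec N$ with $\bigcup\vec N=H_{\theta}$ there is a forcing $\mathbb{Q}(\theta,\vec N)$ --- conditions being partial filtrations of $H_{\theta^{+}}$ with bottom level $H_{\theta}$, subject to the amenability and condensation requirements of \cite{HWW}, ordered by end-extension --- which under $\gch$ is ${<}\theta$-directed closed, of size $\theta^{+}$, satisfies the $\theta^{++}$-cc, preserves $\gch$, and, over a model of $\gch$, whose generic filtration concatenated with $\vec N$ witnesses $\lcc(\theta,\theta^{+})$ at every $\alpha^{*}\in(\theta,\theta^{+})$ relative to every $\mathcal F$ as in Clause~(3) of Definition~\ref{LCCupto}, the collapses of Clause~(3)(d) referring to the concatenated filtration. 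The only thing beyond \cite{HWW} is that $\vec N$ is carried as a fixed side parameter; this is harmless, since the density arguments there concern only the new levels and read $\vec N$ as data.

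Next, the global properties. The iteration is ${<}\kappa$-directed closed, since $\dot{\mathbb{Q}}_{0}=\mathbb{Q}(\kappa,\cdot)$ is ${<}\kappa$-directed closed and every later factor is forced to be ${<}\kappa^{+\xi}$-directed closed --- hence ${<}\kappa$-directed closed --- or trivial, and the supports are chosen to respect this. A stagewise nice-name count using $\gch$ keeps $|\mathbb{P}_{\xi}|$ below $\kappa^{+\alpha+1}$ throughout; combined with the $\theta^{++}$-chain conditions of the factors, the regularity of $\kappa^{+\alpha+1}$, and the standard support analysis for such iterations, this gives the $\kappa^{+\alpha+1}$-cc and the preservation of $\gch$, cardinals and cofinalities. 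Finally, since the portion of the iteration past the factor handling $\theta$ is ${<}\theta^{+}$-closed, it adds no new subset of $\theta$; thus $H_{\theta}$ is not disturbed after that factor, so $M_{\theta}=H_{\theta}$ holds in $V^{\mathbb{P}}$ for every regular $\theta\in[\kappa,\kappa^{+\alpha+1}]$, and $\vec M$ is a filtration of $H_{\kappa^{+\alpha+1}}$, with $M_{\kappa^{+\alpha+1}}=H_{\kappa^{+\alpha+1}}$.

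It remains to check $\langle M_{\beta}\mid\beta<\kappa^{+\alpha}\rangle\models\lcc_{\reg}(\kappa,\kappa^{+\alpha})$, i.e. Clause~(3) of Definition~\ref{LCCupto} at each $\alpha^{*}\in(\kappa,\kappa^{+\alpha})$ with $\theta:=|\alpha^{*}|$ regular, say $\theta=\kappa^{+\xi}$. All forcing past the factor at $\theta$ is ${<}\theta^{+}$-closed, hence adds no new subset of any set of size ${\le}\theta$; in particular it adds no new $\mathcal F$ and no new elementary submodel of the expansion $\langle M_{\alpha^{*}},{\in},\vec M\restriction\alpha^{*},(F_{n})_{n}\rangle$. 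So it suffices to produce the witnessing sequence $\vec{\mathfrak B}$ in $V^{\mathbb{P}_{\xi+1}}$ --- it is then carried up to $V^{\mathbb{P}}$ by the upward absoluteness of elementarity, of Mostowski collapses, and of the combinatorial Clauses (3)(a),(c),(e). And in $V^{\mathbb{P}_{\xi+1}}$ such a $\vec{\mathfrak B}$ is produced directly by $\dot{\mathbb{Q}}_{\xi}=\mathbb{Q}(\theta,\vec M\restriction\theta)$: that forcing was set up relative to $\vec M\restriction\theta$, which is exactly the concatenated filtration below $\theta$, so Clause~(3)(d) is met with the collapses landing on the intended lower levels. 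I expect the main difficulty to lie precisely at this interface --- verifying that the Holy--Welch--Wu machinery genuinely accepts an arbitrary lower filtration as a parameter (in particular the trivial filtrations on the singular-cardinality blocks) while still delivering witnessing submodels whose collapses hit the prescribed lower levels in the sense of Clause~(3)(d); the reverse-Easton bookkeeping, though needing some care at limit stages of small cofinality, is routine by comparison.
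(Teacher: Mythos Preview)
Your proposal is essentially correct and follows the same overall strategy as the paper: iterate the Holy--Welch--Wu forcing through the regular cardinals of $[\kappa,\kappa^{+\alpha})$, force trivially at singulars, take inverse limits at singular limit stages and direct limits at inaccessibles, and read the closure, chain-condition and $\gch$-preservation off the individual factors and Baumgartner's iteration facts.

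The one genuine difference is in how the per-stage filtrations are glued into a single global $\vec M$. You propose to relativise each HWW factor to the filtration already built below, passing it in as a side parameter; this is exactly the interface you flag as worrisome at the end. The paper avoids this altogether. It uses the HWW forcing \emph{unmodified}: each factor at $\kappa^{+\tau}$ produces a predicate $A_{\tau}\subseteq\kappa^{+\tau+1}$ with $M_{\beta}=L_{\beta}[A_{\tau}]$ for $\beta<\kappa^{+\tau+1}$, and the global filtration is simply $M_{\beta}=L_{\beta}[\mathbb A]$ for the concatenated predicate $\mathbb A$. Since the tail past stage $\tau$ is ${<}\kappa^{+\tau}$-closed, $H_{\kappa^{+\tau}}$ is unchanged, so $L_{\beta}[A_{\tau}]$ and $L_{\beta}[\mathbb A]$ agree on a club in $\kappa^{+\tau}$; the $\lcc$ chain built by the stage-$\tau$ factor is then restricted to this club to witness $\lcc$ for the global filtration (this is the content of Lemmas~\ref{AbstractLimit} and~\ref{Succ}). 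For the singular blocks the paper likewise does not pick an ad hoc $\gch$-filtration; it uses the $L[\mathbb A]$-levels, with the relevant piece of $\mathbb A$ coming from the predicate produced at the next regular stage. So: same iteration, but the $L[A]$/club-restriction device replaces your parameter-passing and makes the interface problem you isolated disappear.
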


In Section 4 we show that the iteration of the forcing from \cite{HWW} implies  $\Dl^{*}_{S}(\Pi^{1}_{2})$ (see definition in \cite{FMR}) which is a combinatorial principle defined in \cite{FMR} and is a variation of Devlin's $\diamondsuit^{\sharp}_{\kappa}$ (see \cite{Devlin}).

\begin{cord} Let $\kappa$ be an uncountable regular cardinal and let $\mu$ be a cardinal such that $\mu^{+} \leq \kappa$. If $\gch$ holds, then there is a set forcing $\mathbb{P}$ which is $<\mu^{+}$-directed and $\kappa^{+}$-cc, $\gch$ preserving, such that in $V^{\mathbb{P}}$ we have $\Dl^{*}_{S}(\Pi^{1}_{2})$, in particular  $\diamond(S)$, for any stationary $S \subseteq \kappa$.
\end{cord}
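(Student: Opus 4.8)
The plan is to read off $\mathbb P$ from Theorem~C by a bookkeeping choice of parameters and then to invoke the implication, established in Section~4, from local club condensation on an interval to $\Dl^{*}_{S}(\Pi^{1}_{2})$. Since $\mu^{+}\le\kappa$ and both are cardinals, there is an ordinal $\xi$ with $(\mu^{+})^{+\xi}=\kappa$, and hence $(\mu^{+})^{+\xi+1}=\kappa^{+}$. I would apply Theorem~C with the regular cardinal $\mu^{+}$ playing the role of ``$\kappa$'' and with this $\xi$ playing the role of ``$\alpha$''. This produces a set forcing $\mathbb P$ that is $<\mu^{+}$-directed closed, $\kappa^{+}$-cc and $\gch$-preserving, together with, in $V^{\mathbb P}$, a filtration $\langle M_{\beta}\mid\beta<\kappa^{+}\rangle$ such that $H_{\theta}=M_{\theta}$ for every regular $\theta\in[\mu^{+},\kappa^{+}]$ --- in particular $H_{\kappa}=M_{\kappa}$ and, via Convention~\ref{Union}, $H_{\kappa^{+}}=M_{\kappa^{+}}$ --- and such that $\langle M_{\beta}\mid\beta<\kappa\rangle\models\lcc_{\reg}(\mu^{+},\kappa)$. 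All the quantitative assertions of the corollary about $\mathbb P$ are then immediate.

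It remains to derive $\Dl^{*}_{S}(\Pi^{1}_{2})$ for an arbitrary stationary $S\subseteq\kappa$ from this amount of condensation; the clause ``in particular $\diamondsuit(S)$'' is then just the implication $\Dl^{*}_{S}(\Pi^{1}_{2})\Rightarrow\diamondsuit(S)$ from \cite{FMR}. Working in $V^{\mathbb P}$: given a $\Pi^{1}_{2}$-pattern to be guessed --- concretely, a subset $A$ of $\kappa$ together with a $\Pi^{1}_{2}$-sentence over $\langle H_{\kappa},\in,A\rangle$ --- and a club $D\subseteq\kappa$, the idea is to fix a sequence $\vec F=\langle(F_{n},k_{n})\mid n\in\omega\rangle$ on $M_{\kappa}=H_{\kappa}$ that encodes $\vec M\restriction\kappa$, Skolem functions, and a surjection $\kappa\to H_{\kappa}$ coherent with the filtration, run the hull/continuity construction of Clause~(3) of Definition~\ref{LCCupto} --- extended to the top level $\kappa$ using that $\kappa$ is regular and $M_{\kappa}=H_{\kappa}$ --- to get a $\s$-increasing continuous chain $\langle B_{i}\mid i<\kappa\rangle$ of elementary submodels of $\langle H_{\kappa},\in,\vec M\restriction\kappa,\vec F\rangle$ with $i\s B_{i}$, $|B_{i}|<\kappa$, converging to $H_{\kappa}$, and with $\clps(\langle B_{i},\in,\langle B_{\delta}\mid\delta\in B_{i}\cap\ord\rangle\rangle)=\langle M_{\bar i},\in,\langle M_{\delta}\mid\delta<\bar i\rangle\rangle$ for suitable $\bar i<\kappa$. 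Then $C:=\{\delta<\kappa\mid B_{\delta}\cap\kappa=\delta\}$ is a club, the collapses $M_{\bar\delta}$ for $\delta\in C$ form a canonical coherent sequence, one reads off from it a candidate $\Dl^{*}_{S}(\Pi^{1}_{2})$-sequence, and a minimal-counterexample/pressing-down argument against the stationary $S$ shows that it guesses every pattern on a stationary subset of $S$. The point that makes the reflection step go through is Clause~(3)(d): the filtration ``knows itself'', so the $\Pi^{1}_{2}$-fact being guessed is reflected correctly by the collapsed structure $M_{\bar\delta}$.

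The main obstacle I expect lies exactly in that condensation bookkeeping: $\lcc_{\reg}(\mu^{+},\kappa)$ supplies condensation only at the ordinals $\delta<\kappa$ whose cardinality is a regular cardinal, not at $\kappa$ itself and --- because $\mathbb P$ is only $\kappa^{+}$-cc --- not at $\kappa^{+}$, so one cannot simply collapse elementary submodels of $H_{\kappa^{+}}$. The plan to get around this has two parts: (i) show that $\lcc_{\reg}(\mu^{+},\kappa)$ together with $M_{\kappa}=H_{\kappa}$ propagates to a condensing continuity sequence at the level $\kappa$ --- this is where regularity of $\kappa$ is used, to diagonalize the witnessing sequences supplied at the cofinally-many condensing levels below $\kappa$ --- and (ii) observe that a $\Pi^{1}_{2}$-sentence over $\langle H_{\kappa},\in,A\rangle$, and hence the $\diamondsuit$-pattern it encodes, is determined by the initial segments $A\cap\delta\in H_{\kappa}$, so that condensation at levels below $\kappa$ rather than at $\kappa^{+}$ suffices. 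Pinning down (i) and (ii) precisely --- isolating how much self-coherence of $\vec M$ is needed and checking Clause~(3)(d) delivers it --- is the real work; the remainder is the classical diamond-from-condensation argument plus the routine check that the parameters above instantiate Theorem~C correctly.
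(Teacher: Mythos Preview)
The paper's proof is a two-line citation: it invokes Theorem~D (not Theorem~C) to obtain in the extension both $\lcc_{\reg}$ on the relevant interval \emph{and} a well-order of $H_{\kappa^{+}}$ that is $\Delta_{1}$-definable from a parameter in $H_{\kappa^{+}}$, and then quotes \cite[Theorem~2.24]{FMR}, which derives $\Dl^{*}_{S}(\Pi^{1}_{2})$ from exactly these two ingredients. So the forcing $\mathbb{P}$ is the same one you pick, but the paper extracts strictly more information from it before appealing to the black box from \cite{FMR}.

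Your route diverges at the point where you cite only Theorem~C. That theorem gives you the filtration and $\lcc_{\reg}$, but not the $\Delta_{1}$ well-order; you then try to manufacture the $\Dl^{*}_{S}(\Pi^{1}_{2})$-sequence directly. The genuine gap is your sentence ``one reads off from it a candidate $\Dl^{*}_{S}(\Pi^{1}_{2})$-sequence, and a minimal-counterexample/pressing-down argument\ldots''. A minimal-counterexample argument presupposes a definable well-order: in the classical $\diamondsuit$-in-$L$ proof the $<_{L}$-least counterexample is what makes the guessing sequence canonical and what makes the pressing-down contradiction go through. Nothing in your setup supplies such a well-order, and condensation alone (Clause~(3)(d) of Definition~\ref{LCCupto}) does not let you ``read off'' a single sequence of guesses --- it only tells you that collapses land back in the filtration. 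The self-coherence you isolate in (i) and (ii) is necessary but not sufficient; without a uniform choice mechanism the diagonalization has nothing to diagonalize against.

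The repair is short: replace your appeal to Theorem~C by Theorem~D applied with $\mu^{+}$ in the role of the base cardinal and $\xi$ in the role of $\theta$. For the very same forcing $\mathbb{P}=\mathbb{P}_{\mu^{+},\kappa^{+}}$ this yields, via Lemma~\ref{complexity}, the $\Delta_{1}(\{a\})$ well-order of $H_{\kappa^{+}}$ for some $a\subseteq\kappa$, and then \cite[Theorem~2.24]{FMR} gives $\Dl^{*}_{S}(\Pi^{1}_{2})$ outright; the implication to $\diamondsuit(S)$ is, as you say, the general fact from \cite{FMR}. Your discussion of propagating condensation to level~$\kappa$ then becomes unnecessary --- that work is already absorbed into the hypotheses of \cite[Theorem~2.24]{FMR}.
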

\section{Local club condensation in extender models}
  
  The main result in this section is Theorem~A which extends \cite{FHl}[Theorem~8] and gives a characterization of  local club condensation in terms of  subcompact cardinals. 
  For the standard notation on inner model theory and fine structure like \textit{premouse, projectum, standard parameter} and etc. we refer the reader to \cite{MR1876087}.

\begin{defn}
	Given a premouse $\mathcal{M}$, a parameter $p \in (\mathcal{M} \cap \ord^{<\omega})$ and $\xi \in \mathcal{M} \cap \ord$ and $\langle \varphi_{i} \mid i \in \omega \rangle $ a primitive recursive enumeration of all $\Sigma_{1}$ formulas in the premice language we define $$T_{p}^{M}(\xi)=\{ (a,i) \in (\xi^{<\omega}\times \omega) \mid M\models \varphi_{i}(a,p)\}.$$
\end{defn}

\begin{fact}\label{Phi} Given $\langle \varphi_{n} \mid n \in \omega \rangle $ a primitive recursive enumeration of  all $ \Sigma_1 $ formulas in the premice language, there exists a $\Sigma_{1}$-formula $\Phi(w,x,y)$ in the premice language such that for any premouse $\cM$ the following hold:
	\begin{itemize}
		\item If $n \in \omega$ is such that  $\varphi_{n}(x) = \exists y \phi_n(x,y)$ and $\phi_n$ is $\Sigma_0$, then for every $x \in \cM$ there exists  
		$ y_0 \in \cM$ such that $(\cM\models   \phi_n(x,y_0))$ iff there exists $y_1 \in \cM$ such that $ (\cM \models \Phi(n,x,y_1))$
		\item For every $n \in \omega$ and for every $x \in \cM$, if there are $y_{0},y_{1} \in \cM$ such that  $ (\cM \models \Phi(n,x,y_0) \wedge \Phi(n,x,y_1) ))$ then $y_0 = y_1$
	\end{itemize}
\end{fact}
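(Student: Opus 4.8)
The plan is to recognize this as the standard fact that premice, being acceptable $J$-structures, carry a \emph{uniformly $\Sigma_1$-definable canonical $\Sigma_1$ Skolem function}: I would take $\Phi(w,x,y)$ to be a formula defining the graph of such a function, so that the two listed clauses become exactly its characteristic properties (the relevant fine structure is in \cite{MR1876087}). First I would collect the uniformities I need about a premouse $\cM=\langle J_\alpha^{E},\in,E\rangle$ with its $S$-hierarchy $\langle S_\xi^{\cM}\mid\xi<\omega\alpha\rangle$: (i) for limit $\nu$ the level $S_\nu^{\cM}$ is transitive and the restriction of the predicate to $S_\nu^{\cM}$ lies in $\cM$, uniformly $\Delta_1^{\cM}$ in $\nu$; (ii) the relation ``$s=\langle S_\xi^{\cM}\mid\xi\le\nu\rangle$'' is uniformly $\Delta_1^{\cM}$, so each such sequence is an element of $\cM$; (iii) each $S_\nu^{\cM}$ carries a canonical well-order $<_\nu$, uniformly $\Delta_1^{\cM}$; (iv) the partial $\Sigma_0$-satisfaction predicate for the $\phi_n$'s, $\mathrm{Sat}_0(w,n,\bar a):\Leftrightarrow \langle w,{\in},E\cap w\rangle\models\phi_n(\bar a)$, is uniformly $\Delta_1$, and --- as each $\phi_n$ is $\Sigma_0$ --- is absolute between a transitive $S_\nu^{\cM}$ and $\cM$ for parameters lying in $S_\nu^{\cM}$. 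I would also use that over rud-closed structures (in particular over premice) bounded quantification preserves $\Sigma_1$, hence $\Delta_1$.

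Then I would define $\Phi(w,x,y)$ to assert: \emph{there are a limit ordinal $\nu$ and a sequence $s=\langle S_\xi\mid\xi\le\nu\rangle$ enumerating the $S$-hierarchy up to $\nu$ with $x\in s(\nu)$, there is $y'\in s(\nu)$ with $\mathrm{Sat}_0(s(\nu),w,\langle x,y'\rangle)$, no limit $\xi<\nu$ with $x\in s(\xi)$ admits such a $y'\in s(\xi)$, and $y$ is the $<_\nu$-least element of $s(\nu)$ with $\mathrm{Sat}_0(s(\nu),w,\langle x,y\rangle)$}. The key point for complexity is that once the entire sequence $s$ has been existentially quantified as a single element (rather than computing $\nu\mapsto S_\nu^{\cM}$ pointwise), the ``$\nu$ is least'' clause is a bounded quantifier over $\dom(s)$; using (i), (iii), (iv) and closure of $\Sigma_1$ under bounded quantification, the matrix inside ``$\exists\nu\,\exists s$'' is $\Delta_1$, and with (ii) this makes $\Phi$ a single $\Sigma_1$ formula, valid uniformly over all premice.

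Finally I would verify the two clauses. For functionality: the formula pins down $\nu$ as \emph{the} least limit level at which $x$ occurs together with a $\Sigma_0$-witness for $\phi_n(x,\cdot)$, so any two witnessing instances use the same $\nu$, the same $s$, the same level and the same $<_\nu$, whence both designate the $<_\nu$-least witness there and agree. For the equivalence: if $\cM\models\phi_n(x,y_0)$, pick a limit $\nu$ with $\trcl(\{x,y_0\})\subseteq S_\nu^{\cM}$ (possible since $\cM$ is the union of its limit levels); downward absoluteness in (iv) gives a $\Sigma_0$-witness at level $\nu$, hence at the least appropriate level, so $\Phi(n,x,y_1)$ holds for the resulting $<$-least witness $y_1$; conversely, from $\Phi(n,x,y_1)$ witnessed by $(\nu,s)$ one has $\mathrm{Sat}_0(S_\nu^{\cM},n,\langle x,y_1\rangle)$, and upward absoluteness yields $\cM\models\phi_n(x,y_1)$. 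The only real work lies in (ii) and the ensuing complexity count --- that $\Phi$ is genuinely $\Sigma_1$ and that its matrix has no unbounded quantifiers --- which is where acceptability of the $J$-hierarchy and $\Sigma_1$-closure under bounded quantification enter; the rest is the absoluteness argument above. I expect (ii) together with the complexity verification to be the main obstacle.
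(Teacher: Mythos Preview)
The paper does not prove this statement: it is recorded as a \emph{Fact}, with the surrounding material pointing to Zeman's monograph \cite{MR1876087} for the relevant fine structure, so there is no argument in the paper to compare against. Your proposal is the standard construction of the uniform $\Sigma_1$ Skolem function over acceptable $J$-structures and is correct in outline; in particular, existentially quantifying over the finite initial segment $s=\langle S_\xi\mid\xi\le\nu\rangle$ of the $S$-hierarchy (rather than over $\nu$ alone) is exactly the right move to keep the minimality clause bounded and hence leave $\Phi$ genuinely $\Sigma_1$. One small point worth tightening: you should make explicit that for $\nu<\omega\alpha$ the sequence $\langle S_\xi\mid\xi\le\nu\rangle$ actually lies in $\cM$ (this uses $S_\nu\in S_{\nu+1}$ and rud-closure), since your argument quantifies over such $s$ as elements of $\cM$; once that is said, the complexity count and the absoluteness verification go through as you describe.
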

\begin{defn}
	Let $\cM$ be a premouse we denote by $h_1^{\cM}$ the partial function from $\omega \times \cM	$ into $\cM$ defined by the formula $\Phi$ from Fact \ref{Phi}. Given $ X \subseteq \cM$ and $ p \in \cM$ we denote by $h_{1}^{\cM}[X,p]$ the set $h_{1}^{\cM}[(X\times\{p\})^{<\omega}]$.	
\end{defn}
\begin{fact} \label{Fact1}
	\begin{enumerate}
		\item 	Suppose $L[E]$ is an extender model. Let $\gamma$ be an ordinal such that $E_{\gamma}\neq\emptyset$, then there exists $g \in J_{\gamma+1}^{E}$ such that $g:\lambda(E_{\gamma})\rightarrow \gamma$ onto.
		\item  $\mathcal{P}(J_{\gamma}^{E})\cap J_{\gamma+1}^{E} = \Sigma_{\omega}(J_{\gamma}^{E})  $
	\end{enumerate}
\end{fact}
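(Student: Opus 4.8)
The plan is to prove part~(2) first, since it is the mechanism driving part~(1), and then to deduce part~(1) from it.

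Part~(2) is the standard ``one-step condensation'' for the $J^{E}$-hierarchy (for $L$ this is Jensen's lemma; the $L[E]$ version is in \cite{MR1876087}). I would verify both inclusions by unwinding $J_{\gamma+1}^{E}=\rud_{E}(J_{\gamma}^{E}\cup\{J_{\gamma}^{E}\})$, using the basic observation that a $\Sigma_{0}$ formula over the structure $(J_{\gamma}^{E}\cup\{J_{\gamma}^{E}\},{\in},E)$ may take all of its (necessarily bounded) quantifiers to be bounded by the point $J_{\gamma}^{E}$ itself, and such a quantifier ``bounded by $J_{\gamma}^{E}$'' translates, over $(J_{\gamma}^{E},{\in},E)$, into an unbounded quantifier; hence $\Sigma_{0}$-definability over $J_{\gamma}^{E}\cup\{J_{\gamma}^{E}\}$ from parameters in $J_{\gamma}^{E}$ coincides with full first-order, i.e.\ $\Sigma_{\omega}$, definability over $J_{\gamma}^{E}$. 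Granting this, for the inclusion $\Sigma_{\omega}(J_{\gamma}^{E})\subseteq\mathcal{P}(J_{\gamma}^{E})\cap J_{\gamma+1}^{E}$ one notes that any $\Sigma_{0}$-over-$(J_{\gamma}^{E}\cup\{J_{\gamma}^{E}\})$ subset of $J_{\gamma}^{E}$ is the value of a rud-in-$E$ function at $J_{\gamma}^{E}$ and the parameters, hence belongs to $\rud_{E}(J_{\gamma}^{E}\cup\{J_{\gamma}^{E}\})$; and for the reverse inclusion one notes that every $x\in J_{\gamma+1}^{E}$ has the form $x=f(J_{\gamma}^{E},p)$ for a rud-in-$E$ function $f$ and $p\in J_{\gamma}^{E}$, so, unwinding the finitely many compositions making up $f$, the relation ``$v\in x$'' is $\Sigma_{0}$ over $(J_{\gamma}^{E}\cup\{J_{\gamma}^{E}\},{\in},E)$ in the parameters $v,p$, and therefore $x\cap J_{\gamma}^{E}=x$ is $\Sigma_{\omega}(J_{\gamma}^{E})$. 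I would quote the precise statement from \cite{MR1876087} rather than carry out the bookkeeping with rud-in-$E$ normal forms.

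For part~(1), fix $\gamma$ with $F:=E_{\gamma}\neq\emptyset$, write $\kappa=\crit(F)$ and $\lambda=\lambda(F)$, and form the ultrapower $N$ of the appropriate initial segment of $\mathcal{J}_{\gamma}^{E}$ by $F$. By the indexing convention one has $N\cap\ord=\gamma$ with $\lambda$ the largest cardinal of $N$; since $N$ is acceptable, it is the $\Sigma_{\omega}$-hull of $\lambda\cup\{p\}$ for a suitable parameter $p\in N$, which yields a surjection $g^{*}\colon\lambda\to\gamma$ definable over $N$. The construction of $N$, of the ultrapower embedding, and of this Skolem-style surjection are all uniformly $\Sigma_{\omega}$ over the structure $\mathcal{J}_{\gamma}^{E}$ (which carries $F=E_{\gamma}$ as a predicate), and $F$ is moreover collected into a set of $J_{\gamma+1}^{E}$ by the rud-in-$E$ closure defining that level; hence $g^{*}$ is $\Sigma_{\omega}$ over $\mathcal{J}_{\gamma}^{E}$ and so, by part~(2), $g^{*}\in J_{\gamma+1}^{E}$. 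This produces the desired onto map $g^{*}\colon\lambda(E_{\gamma})\to\gamma$ in $J_{\gamma+1}^{E}$.

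I expect the only genuinely delicate point to lie in part~(1): pinning down, for the chosen indexing, that the relevant ultrapower $N$ really satisfies $N\cap\ord=\gamma$ with $\lambda(E_{\gamma})$ its largest cardinal, so that the Skolem surjection lands exactly on $\gamma$ — this is where the distinction between Jensen's $\lambda$-indexing and Mitchell--Steel indexing, and between the short-extender and superstrong-type cases, enters. I would fix the indexing convention at the outset (in the applications the paper works with Jensen's $\lambda$-indexing) and invoke the corresponding acceptability and solidity lemmas from \cite{MR1876087}; everything else, including the entirety of part~(2), is then routine fine-structural bookkeeping.
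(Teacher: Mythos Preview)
The paper does not prove this; it is stated as a Fact, i.e.\ as standard material from \cite{MR1876087}, so there is no proof in the paper to compare against. Your plan for part~(2) is the standard argument. For part~(1) your overall strategy---produce a surjection definable over the active level and then invoke part~(2)---is correct, but the ultrapower~$N$ does not have the properties you attribute to it. Under Jensen's $\lambda$-indexing, $N=\mathrm{Ult}_0(M,E_\gamma)$ (with $M=(J_\gamma^{E},\in,E\restriction\gamma,E_\gamma)$) has $\gamma=\lambda^{+N}$ as an internal cardinal, so $\ord^{N}>\gamma$; and by coherence $N|\gamma$ is just the \emph{passive} structure $(J_\gamma^{E},\in,E\restriction\gamma)$, which need not project to~$\lambda$ and hence is not a hull of $\lambda\cup\{p\}$---acceptability alone does not yield this, you need soundness together with a small projectum, and it is the top predicate that supplies the latter. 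The fix is to work with the active~$M$ directly: $E_\gamma$ witnesses $\rho_1(M)\le\lambda$, so by $1$-soundness $h_1^{M}\restriction(\lambda\cup\{p_1^{M}\})$ surjects onto~$\gamma$; equivalently, since $\gamma=j((\kappa^{+})^{M})$, every $\alpha<\gamma$ is $[a,f]_{E_\gamma}$ with $a\in[\lambda]^{<\omega}$ and $f\in J_{(\kappa^{+})^{M}}^{E}$, a set of $M$-size~$\lambda$, and $(a,f)\mapsto[a,f]_{E_\gamma}$ is the desired $\Sigma_1(M)$ surjection. Either way, part~(2)---read over the full structure with predicate~$E_\gamma$, which is what $\rud_E$ delivers at level~$\gamma+1$---places the surjection in $J_{\gamma+1}^{E}$.
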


\begin{lemma} \label{Collapse}
	Suppose $L[E]$ is an extender model  and $\gamma$ is such that $E_{\gamma} \neq \emptyset$ and $L_{\gamma}[E]=J_{\gamma}^{E}$.  Then there exists $g \in L_{\gamma+1}[E]$ such that $g:\lambda(E_{\gamma})\rightarrow \gamma$ and $g$ is onto.
\end{lemma}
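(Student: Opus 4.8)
The plan is to apply Fact~\ref{Fact1}(1) to obtain the required surjection inside $J_{\gamma+1}^{E}$ and then to argue that, under the hypothesis $L_{\gamma}[E]=J_{\gamma}^{E}$, such a map must in fact already belong to $L_{\gamma+1}[E]$. So, using Fact~\ref{Fact1}(1), fix $g\in J_{\gamma+1}^{E}$ with $g\colon\lambda(E_{\gamma})\to\gamma$ onto; the whole content of the lemma is then to check that this $g$ lies in $L_{\gamma+1}[E]$.

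First I would observe that $g\subseteq J_{\gamma}^{E}$. The level $J_{\gamma}^{E}$ is rudimentarily closed, hence closed under the Kuratowski pairing operation; moreover $J_{\gamma}^{E}\cap\ord=\gamma$ and $\lambda(E_{\gamma})\le\gamma$, so for every $\xi<\lambda(E_{\gamma})$ both $\xi$ and $g(\xi)$ are ordinals below $\gamma$, hence in $J_{\gamma}^{E}$, and therefore the pair $(\xi,g(\xi))$ belongs to $J_{\gamma}^{E}$. Thus $g\in\mathcal P(J_{\gamma}^{E})\cap J_{\gamma+1}^{E}$. By Fact~\ref{Fact1}(2), $\mathcal P(J_{\gamma}^{E})\cap J_{\gamma+1}^{E}=\Sigma_{\omega}(J_{\gamma}^{E})$, so $g$ is first-order definable, with parameters, over the (amenable) premouse-structure with underlying set $J_{\gamma}^{E}$. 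Invoking the hypothesis $L_{\gamma}[E]=J_{\gamma}^{E}$, this structure is literally the structure over which $L_{\gamma+1}[E]$ is formed, so $g$ is definable over $L_{\gamma}[E]$ and hence $g\in L_{\gamma+1}[E]$, which completes the argument.

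The only genuine obstacle is the final identification of the two structures: one must be sure that the fragment of the extender sequence used to interpret $\Sigma_{\omega}(J_{\gamma}^{E})$ in Fact~\ref{Fact1}(2) --- in particular, whether the active extender $E_{\gamma}$ is part of the predicate --- is exactly the fragment that is available when forming $L_{\gamma+1}[E]$ from $L_{\gamma}[E]$. This is where the hypothesis $L_{\gamma}[E]=J_{\gamma}^{E}$ is essential, since in general $L_{\gamma+1}[E]\ne J_{\gamma+1}^{E}$; it works out because both the $L$- and the $J$-hierarchy are stratifications of the same model $L[E]$ and therefore carry the same predicate at any level at which their underlying sets coincide. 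Everything else is routine unwinding of the definitions of the two hierarchies.
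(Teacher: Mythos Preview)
Your argument is correct and follows essentially the same route as the paper's one-line proof, which just invokes Fact~\ref{Fact1} together with the identification $\Sigma_{\omega}(J_{\gamma}^{E}) = L_{\gamma+1}[E]$; you have simply unpacked that identification more carefully, in particular verifying $g\subseteq J_{\gamma}^{E}$ so that Fact~\ref{Fact1}(2) applies and flagging the predicate-matching issue that the paper leaves implicit.
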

\begin{proof}  It follows from Fact \ref{Fact1} and that $\Sigma_{\omega}(J_{\gamma}^{E}) = L_{\gamma+1}[E]$.
	\end{proof} 

\begin{remark}
	Notice that in particular for any premouse $\cM$, if $\gamma \in \cM \cap \ord $ and $\cM\models ``\gamma \text{ is a cardinal}"$ it follows from Fact \ref{Fact1} that $ E_{\gamma} = \emptyset$, as otherwise $$J_{\gamma+1}^{E} \models ``\gamma \text{ is not a cardinal},"$$ and hence $$\cM\models  ``\gamma \text{ is not a cardinal}."$$ 
\end{remark}

\begin{defn}\label{weaklyit}
We say that an extender model $L[E]$ is \emph{weakly iterable} iff for every $\alpha \in \ord$ if there exists an elementary embedding $\pi:\bar{\cM} \rightarrow (J_{\alpha}^{E},\in,E|\alpha,E_{\alpha})$, then $\bar{\cM}$ is $(\omega_{1}+1,\omega_{1})$-iterable.\footnote{See definiton 9.1.10 in \cite{MR1876087} for the definition of $(\omega_{1}+1,\omega_{1})$-iterable. }	
\end{defn}

\begin{lemma} \label{Condensation} Let $L[E]$ be an extender model that is weakly iterable and let $\kappa$ be a cardinal in $L[E]$.
	Suppose $i:\mathcal{N} \rightarrow \mathcal{M}$ is the inverse of the Mostowisk collapse of $h_{1}^{\mathcal{M}}[\gamma \cup \{p_{1}^{\mathcal{M}}\}]$, $\rho_{1}(N)=\gamma$, $\crit(\pi)=\gamma$, $\gamma < \kappa$, $\mathcal{M} = \langle J_{\alpha}^{E}, \in, E\restriction \alpha, E_{\alpha} \rangle$ for some $\alpha \in (\kappa^{+},\kappa^{++})$.  Then  $\mathcal{N} \triangleleft \mathcal{M}$ if and only if $E_{\gamma}=\emptyset$. 
	\end{lemma}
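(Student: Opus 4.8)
The plan is to analyze the two directions separately, using the standard condensation machinery of fine structure together with weak iterability. First I would record the setup: $\mathcal N$ is the transitive collapse of the $\Sigma_1$-hull $h_1^{\mathcal M}[\gamma\cup\{p_1^{\mathcal M}\}]$ of $\mathcal M=\langle J_\alpha^E,\in,E\restriction\alpha,E_\alpha\rangle$, with $\rho_1(\mathcal N)=\gamma=\crit(i)$, and $\gamma<\kappa\le\alpha$ where $\alpha\in(\kappa^+,\kappa^{++})$. Since $L[E]$ is weakly iterable and $i:\mathcal N\to\mathcal M$ is elementary (indeed a $\Sigma_1$-preserving, hence by the usual argument a near $n$-embedding at the relevant level), $\mathcal N$ is $(\omega_1+1,\omega_1)$-iterable. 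By the fine-structural Condensation Lemma (the version in \cite{MR1876087}, e.g.\ the Dodd--Jensen condensation / Jensen's "lemma of the fourth kind"), an iterable premouse $\mathcal N$ that embeds into an initial segment of $L[E]$ via a map with critical point $\rho_1(\mathcal N)$ must satisfy exactly one of: (i) $\mathcal N\triangleleft\mathcal M$ (equivalently $\mathcal N\triangleleft L[E]$), or (ii) $\mathcal N$ is an ultrapower-type object, namely $\mathcal N\triangleleft\mathrm{Ult}(\mathcal M',E_{\bar\gamma})$ for the relevant truncation, which is precisely the case where an extender is indexed at $\gamma$ in the collapse. The content of the lemma is to identify case (ii) with "$E_\gamma\neq\emptyset$."

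For the direction $E_\gamma=\emptyset\Rightarrow\mathcal N\triangleleft\mathcal M$: assuming toward a contradiction that $\mathcal N\not\triangleleft\mathcal M$, the Condensation Lemma forces the alternative, which produces a total extender on the $\mathcal N$-sequence (or on $\mathcal M$'s sequence) with critical point $\gamma$ and index $\gamma$; pulling this back through $i$ and using that $i$ is the identity below $\gamma$ and that the sequences cohere, one reads off $E_\gamma\neq\emptyset$, a contradiction. Here I would use Lemma 2.6 / the Remark after it: if $\gamma$ were a cardinal in $\mathcal M$ then $E_\gamma=\emptyset$ automatically, but in general the point is that the collapse index $\gamma=\rho_1(\mathcal N)$ is exactly where a new extender would appear, and coherence of $E$ pins it down. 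For the converse $E_\gamma\neq\emptyset\Rightarrow\mathcal N\not\triangleleft\mathcal M$: if $E_\gamma\neq\emptyset$, then by Lemma~\ref{Collapse} there is $g\in J_{\gamma+1}^E=L_{\gamma+1}[E]$ mapping $\lambda(E_\gamma)$ onto $\gamma$, so $\gamma$ is not a cardinal in $J_{\gamma+1}^E$; but if we had $\mathcal N\triangleleft\mathcal M$, then since $\rho_1(\mathcal N)=\gamma$ and $\mathcal N$ projects to $\gamma$, the premouse $\mathcal N$ would have to agree with $L[E]$ below its own reals in a way that makes $\gamma=\rho_1(\mathcal N)$ a cardinal in the next level up inside $L[E]$ — more precisely, an initial segment $\mathcal N\triangleleft L[E]$ with $\rho_1(\mathcal N)=\gamma$ and with $\gamma$ not a cardinal in $L[E]$ leads, via acceptability, to a contradiction with $\rho_1(\mathcal N)=\gamma$ (the surjection $g$ would already live inside $\mathcal N$ or an earlier level, collapsing $\gamma$ too soon). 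I would make this precise using acceptability of the $J^E$-hierarchy: since $g\in L_{\gamma+1}[E]$ collapses $\gamma$, any $\mathcal N\triangleleft L[E]$ with $\gamma\in\mathcal N\cap\ord$ and $\gamma$ an $\mathcal N$-cardinal must satisfy $\mathcal N\cap\ord\le\gamma$, so $\mathcal N=J_\gamma^E$ and then $\rho_1(\mathcal N)=\gamma$ fails to give a nontrivial hull collapse matching the hypotheses; hence $\mathcal N\not\triangleleft\mathcal M$.

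The main obstacle I anticipate is the careful bookkeeping in invoking the Condensation Lemma at the correct fine-structural level: one must check that $i$ restricted to the right reduct is a weak/near $0$-embedding or cofinal $\Sigma_1$-embedding so that the hypotheses of the iterability-based condensation theorem are met, and that the "protomouse vs. premouse" dichotomy of that theorem corresponds cleanly to the dichotomy $E_\gamma=\emptyset$ versus $E_\gamma\neq\emptyset$. This is where Jensen's $\lambda$-indexing conventions and the coherence of $E$ (the fact that $E\restriction\gamma$ is an initial segment shared by $\mathcal M$, $\mathcal N$, and any ultrapower formed at $\gamma$) do the real work; I would cite the relevant statement from \cite{MR1876087} rather than reprove it, and spend the effort verifying that $\gamma=\crit(i)=\rho_1(\mathcal N)$ together with $\alpha<\kappa^{++}$ places us squarely in its scope.
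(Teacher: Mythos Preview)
Your approach is essentially correct but takes a genuinely different route from the paper. The paper does \emph{not} invoke the Condensation Lemma from \cite{MR1876087} as a black box; instead it reproves the relevant special case by hand. Concretely, the paper passes to a countable elementary substructure $H\prec V_\Omega$ containing $i$, collapses to obtain $\bar i:\bar{\mathcal N}\to\bar{\mathcal M}$, fixes an $e$-minimal $(\omega_1,\omega_1+1)$-strategy for $\bar{\mathcal M}$ (via the Neeman--Steel uniqueness lemma), and then compares the phalanx $\langle\bar{\mathcal M},\bar{\mathcal N},\bar\gamma\rangle$ against $\bar{\mathcal M}$. The Dodd--Jensen-style analysis shows the phalanx side does not move (soundness of $\bar{\mathcal N}$), that $\bar{\mathcal N}\neq\mathcal M^{\mathcal T}_\infty$ (a projectum computation), and finally that $\bar{\mathcal M}$ does not move either --- and it is precisely \emph{here} that the hypothesis $E_\gamma=\emptyset$ enters: since $\bar{\mathcal N}\restriction\gamma=\bar{\mathcal M}\restriction\gamma$ and $E_\gamma=\emptyset$, the first extender used on the $\bar{\mathcal M}$ side must have index strictly above $\gamma$, which leads to a cardinality contradiction. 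Elementarity of the collapse then gives $\mathcal N\triangleleft\mathcal M$.

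Your plan, by contrast, outsources this comparison argument to the full Condensation Lemma and simply reads off the dichotomy. That is legitimate, and arguably cleaner, provided you are careful about which formulation you cite: in Jensen $\lambda$-indexing the obstruction case is that $E_\gamma^{\mathcal M}\neq\emptyset$ (the extender is \emph{indexed} at $\gamma$, with critical point strictly below $\gamma$), not an extender with critical point $\gamma$ as you wrote at one point --- your phrase ``critical point $\gamma$ and index $\gamma$'' cannot be right. You also treat both directions, whereas the paper explicitly proves only $E_\gamma=\emptyset\Rightarrow\mathcal N\triangleleft\mathcal M$; your argument for the converse (that $\rho_1(\mathcal N)=\gamma$ forces $\gamma$ to be an $\mathcal N$-cardinal, while $E_\gamma\neq\emptyset$ makes $\gamma$ collapse already in $J_{\gamma+1}^E\subseteq\mathcal N$) is correct and worth including. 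What the paper's hands-on approach buys is transparency about exactly where weak iterability and $E_\gamma=\emptyset$ are consumed; what your approach buys is brevity, at the cost of relying on the reader's familiarity with the precise statement of condensation for $\lambda$-indexed premice.
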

	\begin{proof} The proof is a special case of condensation lemma. Suppose that  $E_{\gamma}= \emptyset$, we will verify that $\mathcal{N} \triangleleft \mathcal{M}$.
		
		 Let $H \prec_{\Sigma_{\omega}} V_{\Omega}$ for some $\Omega$ large enough, where $i \in H$ and $H$ is countable. Let $ \pi:\bar{H} \rightarrow V_{\Omega}$ be the inverse of the Mostowisk colapse of $H$, let $\pi(\bar{\mathcal{N}}) = \mathcal{N}$, $\pi(\bar{\mathcal{M}})=\mathcal{M}$ and $\pi(\bar{i})=i$.
		 
		 Let $e$ be an enumeration of $\bar{\cM}$ and let $ \Sigma$ be an $e$-minimal $(\omega_{1},\omega_{1}+1)$-strategy for $\bar{M}$ \footnote{The existence of an $e$-minimal iteration strategy follows from the hypothesis that $L[E]$ is weakly iterable and Neeman-Steel lemma, see \cite[Theorem9.2.11]{MR1876087}}. Since $\bar{i}$ embedds $\langle \bar{\cM},\bar{\cN},\bar{\gamma} \rangle$ into $\bar{\cM}$, it follows from 9.2.12 in \cite{MR1876087} that we can compare $\langle \bar{\cM},\bar{\cN},\bar{\gamma}\rangle$ and $\bar{\cM}$ and we have the following: 
		 \begin{itemize}
		 	\item $\bar{\cM}$ wins the comparison,
		 	\item the last model on the phalanx side is above $\bar{\cN}$,
		 	\item there is no drop on the branch of the phalanx side.
		 \end{itemize}
	
	 From the fact that $h_{1}^{\mathcal{N}}(\gamma \cup \bar{p}) = \mathcal{\cN}$ it follows that $h_{1}^{\bar{\mathcal{N}}}(\bar{\gamma}\cup q)=\bar{\mathcal{N}}$ where $\pi(q)=\bar{p}$. This implies that $\bar{\mathcal{N}}$ can not move in the comparison, as otherwise it would drop and we already know that it is the $\bar{\mathcal{M}}$ side which wins the comparison. Let $\mathcal{T}$ be the iteration tree on $\bar{\cM}$ and $\mathcal{U}$ the iteration tree on the phalanx $\langle \bar{M},\bar{N},\bar{\gamma} \rangle$.
	
	\begin{claim}
		$\bar{\mathcal{N}} \neq \mathcal{M}^{\mathcal{T}}_{\infty}$
	\end{claim}
	\begin{proof} We already know that $\bar{\mathcal{N}} \triangleleft \mathcal{M}^{\mathcal{T}}_{\infty}$. If $\bar{\mathcal{M}}$  does not move then $\bar{\mathcal{N}} \neq \mathcal{M}^{\mathcal{T}}_{\infty}=\mathcal{M}$ since they have different cardinality. Suppose $\mathcal{T}$ is non-trivial and $\mathcal{M}^{\mathcal{T}}_{\infty} = \bar{\mathcal{N}}$ let $b^{\mathcal{T}}$ be the main branch in $\mathcal{T}$. Let $\eta $ be the last drop in $b^{\mathcal{T}}$. In order to $\mathcal{M}^{\mathcal{T}}_{\infty}$ be 1-sound we need $\crit(E_{\eta}^{\mathcal{T}}) < \rho_{1}(\mathcal{M}_{\eta}^{\mathcal{T}})$ and since $\lambda(E_{0}^{\mathcal{T}}) > \gamma$ we have $\lambda(E_{\eta}^{\mathcal{T}}) > \gamma$. This implies that $\rho_{1}(\mathcal{M}_{\infty}^{\mathcal{T}}) \geq \rho_{1}(\mathcal{M}_{\eta+1}^{\mathcal{T}}) > \pi_{\eta^{*},\eta+1}^{\mathcal{T}}(\kappa_{\eta}) \geq \gamma = \rho_{1}(\mathcal{N})$, which is a contradiction since we are assuming that $\mathcal{M}^{\mathcal{T}}_{\infty} = \bar{\mathcal{N}}$. 
		\end{proof}
	
	Since $\bar{\mathcal{N}} $ is a proper initial segment of $\mathcal{M}^{\mathcal{T}}_{\infty}$ it will follows that $\bar{\mathcal{M}}$ does not move. For a contradiction, suppose $\bar{\mathcal{M}}$ moves then the index $\lambda(E_{0}^{\mathcal{U}}) $ of the first extender used on the $\bar{\mathcal{M}}$ side is greater than $\gamma$ since $\bar{\mathcal{N}}\restriction \gamma = \bar{\mathcal{M}}\restriction \gamma$ and, by our hypothesis, $E_{\gamma}= \emptyset$. Moreover the cardinal in $\mathcal{M}^{\mathcal{U}}_{lh(\mathcal{T}-1)}$, the last model in the iteration on the $\bar{\mathcal{M}}$ side of the comparison.  We have the following: \begin{itemize}
		\item $\bar{\mathcal{N}}$ is a proper initial segment of $\mathcal{M}^{\mathcal{T}}_{\infty}$,
		\item  $\lambda(E_{0}^{\mathcal{U}}) \leq (\bar{\cN}\cap \ord)$, 
		\item $h_{1}^{\bar{\mathcal{N}}}(\bar{\gamma}\cup q)=\bar{\mathcal{N}}$,
		\item $h_{1}^{\bar{\mathcal{N}}} \restriction (\bar{\gamma}\cup \{q\}) \in \mathcal{M}^{\mathcal{T}}_{\infty}$, 
	\end{itemize} then there exists a surjection from $\bar{\gamma}$ onto the index of $E_{0}^{\mathcal{T}}$ in $\mathcal{M}^{\mathcal{T}}_{\infty}$ which is a contradiction. 
	
	Thus we must have $\bar{\mathcal{N}} \triangleleft \bar{\mathcal{M}}$ and by elementarity of $\pi$ we have $\mathcal{N} \triangleleft \mathcal{M}$.	
		\end{proof}

\begin{lemma} \label{Club}
	Let $L[E]$ be an extender model that is weakly iterable. In $L[E]$, let $\kappa$ be a cardinal which is not a subcompact cardinal. Let $ \beta \in (\kappa^{+},\kappa^{++})$ and $\mathcal{M}= (J_{\beta}^{E},\in,E\restriction \beta, E_{\beta})$ and suppose that $\rho_{1}(\cM)= \kappa^{+}$. Then there is club $C \subseteq \kappa^{+}$ such that for all $\gamma \in C $ if $\mathcal{N} = \text{clps}(h_{1}^{\cM}(\gamma \cup \{p_{1}^{\cM}\}))$ then $\rho_{1}(\cN)= \gamma$. 
\end{lemma}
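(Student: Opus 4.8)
The plan is to show that the set of "good" $\gamma$ contains a club by exhibiting it as the set of fixed points of a natural continuous function, and then using Lemma~\ref{Condensation} to control the projectum of the collapse. First I would fix, for each $\gamma<\kappa^{+}$, the structure $\mathcal{N}_{\gamma}=\clps(h_{1}^{\cM}(\gamma\cup\{p_{1}^{\cM}\}))$ together with its uncollapse map $i_{\gamma}\colon\mathcal{N}_{\gamma}\to\cM$. Since $\rho_{1}(\cM)=\kappa^{+}$ and $p_{1}^{\cM}$ is the standard parameter, a standard fine-structural argument (condensation for the $\Sigma_{1}$ hull, as in \cite[\S8]{MR1876087}) shows that $i_{\gamma}$ is a near $\Sigma_{1}$-elementary embedding, that $\crit(i_{\gamma})=\gamma$ whenever $\gamma$ is "closed enough," and that $\rho_{1}(\mathcal{N}_{\gamma})\le\gamma$ always, with $h_{1}^{\mathcal{N}_{\gamma}}(\gamma\cup\{q_{\gamma}\})=\mathcal{N}_{\gamma}$ where $i_{\gamma}(q_{\gamma})=p_{1}^{\cM}$. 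The potential failure is $\rho_{1}(\mathcal{N}_{\gamma})<\gamma$; I want to rule this out on a club.

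The key device is to let $f\colon\kappa^{+}\to\kappa^{+}$ be defined by $f(\gamma)=\sup\bigl(h_{1}^{\cM}(\gamma\cup\{p_{1}^{\cM}\})\cap\kappa^{+}\bigr)$; since each hull has size $<\kappa^{+}$ and $\kappa^{+}$ is regular, $f$ is well-defined and its closure points form a club $C_{0}$. For $\gamma\in C_{0}$ we get $h_{1}^{\cM}(\gamma\cup\{p_{1}^{\cM}\})\cap\kappa^{+}=\gamma$, hence $h_{1}^{\cM}(\gamma\cup\{p_{1}^{\cM}\})\cap\ord$ has order type giving $\mathcal{N}_{\gamma}\cap\ord$ with $\gamma$ a cardinal of $\mathcal{N}_{\gamma}$ (or at least $\gamma=\mathcal{N}_{\gamma}\cap\kappa^{+}$-image), and crucially $\crit(i_{\gamma})=\gamma$. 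Now I would intersect with a further club: on $C_{0}$ we are in position to apply Lemma~\ref{Condensation}, but that lemma as stated gives $\mathcal{N}_{\gamma}\triangleleft\cM$ exactly when $E_{\gamma}=\emptyset$. So I also need the club $C_{1}$ of $\gamma<\kappa^{+}$ with $E_{\gamma}=\emptyset$: here is where non-subcompactness of $\kappa$ enters — if $E_{\gamma}\ne\emptyset$ for stationarily many $\gamma<\kappa^{+}$ with $\gamma$ a cardinal (equivalently for a suitable stationary set), then these extenders would witness that $\kappa$ is subcompact (via the standard fact that subcompactness of $\kappa$ in $L[E]$ is equivalent to $E$ being non-trivial cofinally on cardinals relevant to $\kappa^{+}$-structures; cf.\ \cite{SquareinK}). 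Contrapositively, non-subcompact $\kappa$ yields a club $C_{1}\subseteq\kappa^{+}$ on which $E_{\gamma}=\emptyset$.

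Then for $\gamma\in C:=C_{0}\cap C_{1}\cap\reg^{\kappa^{+}}$ (or $C_0\cap C_1$ suitably thinned to limit points), Lemma~\ref{Condensation} applies with $\mathcal{M}$ and $\mathcal{N}=\mathcal{N}_{\gamma}$, $\crit(i_{\gamma})=\gamma$, $\rho_{1}(\mathcal{N}_{\gamma})\le\gamma$; the hypothesis $E_{\gamma}=\emptyset$ gives $\mathcal{N}_{\gamma}\triangleleft\cM$. But if $\mathcal{N}_{\gamma}\triangleleft\cM$ and $\rho_{1}(\mathcal{N}_{\gamma})<\gamma$ were to hold, then $\mathcal{P}(\rho_{1}(\mathcal{N}_{\gamma}))\cap\mathcal{N}_{\gamma}$ would already compute a surjection of $\rho_{1}(\mathcal{N}_{\gamma})$ onto $\mathcal{N}_{\gamma}\cap\ord$ inside $\cM$, so $\cM$ would see $\gamma$ not to be a cardinal — contradicting $\gamma\in C_{0}$, which forces $\gamma=\mathcal{N}_{\gamma}\cap\ord\cap\rng(\text{the }\kappa^{+}\text{-preimage})$ to behave as a cardinal in $\mathcal{N}_{\gamma}$ and hence $\rho_{1}(\mathcal{N}_{\gamma})\ge\gamma$. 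So $\rho_{1}(\mathcal{N}_{\gamma})=\gamma$ for all $\gamma\in C$, proving the lemma. The main obstacle I anticipate is the bookkeeping needed to guarantee $\crit(i_{\gamma})=\gamma$ (not merely $\le\gamma$) on a club and to phrase the non-subcompactness hypothesis precisely as "$E_{\gamma}=\emptyset$ on a club in $\kappa^{+}$" — this is the point where the precise definition of subcompact from \cite{SquareinK} and the fine-structural condensation facts from \cite{MR1876087} have to be invoked carefully; everything else is routine club-intersection and soundness arguments.
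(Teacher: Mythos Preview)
Your approach has two genuine gaps, and it diverges substantially from the paper's argument.

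\textbf{Circularity with Lemma~\ref{Condensation}.} As stated in the paper, Lemma~\ref{Condensation} takes $\rho_{1}(\mathcal N)=\gamma$ as a \emph{hypothesis}. You invoke it with only $\rho_{1}(\mathcal N_\gamma)\le\gamma$ in hand, in order to conclude $\mathcal N_\gamma\triangleleft\mathcal M$ and then deduce $\rho_{1}(\mathcal N_\gamma)=\gamma$. That is circular: in the paper's logical order, Lemma~\ref{Club} is proved first and its conclusion $\rho_{1}(\mathcal N_\gamma)=\gamma$ is what licenses the later application of Lemma~\ref{Condensation} in the proof of Lemma~\ref{NoSubcompact}, not the other way around.

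\textbf{The final contradiction does not fire.} Even granting $\mathcal N_\gamma\triangleleft\mathcal M$, your endgame does not produce a contradiction. From $\rho_{1}(\mathcal N_\gamma)<\gamma$ you get a surjection from some $\rho<\gamma$ onto $\mathcal N_\gamma\cap\ord$ that is $\Sigma_1$-definable \emph{over} $\mathcal N_\gamma$; this surjection lives in $\mathcal M$, not in $\mathcal N_\gamma$. So $\mathcal M$ sees that $\gamma$ is not a cardinal --- but almost every $\gamma<\kappa^{+}$ already fails to be a cardinal in $\mathcal M=J_\beta^{E}$, so this is no contradiction. The fact that $\gamma$ is a cardinal \emph{in $\mathcal N_\gamma$} (which is what membership in $C_0$ gives you) is perfectly compatible with a collapsing map appearing one level above $\mathcal N_\gamma$ inside $\mathcal M$.

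\textbf{What the paper actually does.} The paper's proof is direct and uses neither Lemma~\ref{Condensation} nor the non-subcompactness hypothesis. For each $\xi<\kappa^{+}$ and each finite parameter $r$ in the hull, the $\Sigma_1$-theory $T_r^{\mathcal M}(\xi)$ is (coded by) a subset of $\xi$, and since $\rho_1(\mathcal M)=\kappa^{+}$, acceptability places $T_r^{\mathcal M}(\xi)$ in some $J_{f(\xi)}^{E}$ with $f(\xi)<\kappa^{+}$. Take $C$ to be a club of $\gamma$ which are closure points of $f$ and satisfy $h_1^{\mathcal M}(\gamma\cup\{p_1^{\mathcal M}\})\cap\kappa^{+}=\gamma$. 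For such $\gamma$ and any $\xi<\gamma$, the $\Sigma_1$-elementarity of the uncollapse gives $T_r^{\mathcal N_\gamma}(\xi)=T_{i_\gamma(r)}^{\mathcal M}(\xi)\in J_\gamma^{E}=\mathcal N_\gamma\!\restriction\!\gamma\subseteq\mathcal N_\gamma$. Hence every bounded $\Sigma_1$-theory of $\mathcal N_\gamma$ over an ordinal below $\gamma$ already lies in $\mathcal N_\gamma$, so $\rho_1(\mathcal N_\gamma)\ge\gamma$. The reverse inequality is the usual diagonal set $\{\xi<\gamma\mid \xi\notin h_1^{\mathcal N_\gamma}(\xi,q)\}$. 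In particular, the club of $\gamma$ with $E_\gamma=\emptyset$ and the non-subcompactness of $\kappa$ play no role here; those ingredients enter only later, in the proof of Lemma~\ref{NoSubcompact}.
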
	
\begin{proof} 
	Let $g$ be a function with domain $\kappa^{+}$ such that for each $\xi < \kappa^{+}$ we have that $g(\xi) = h_{1}^{\cM}(\xi \cup \{p_{1}^{\cM}\}) \cap \ord^{<\omega}$. 
	
	Let $f: \kappa^{+} \rightarrow \kappa^{+}$ where given $ \gamma < \kappa^{+} $,  $f(\gamma) $ is the least ordinal such that for every $r \in g(\xi)$ we have that $ T_{r}^{\cM}(\gamma) \in J_{f(\gamma)}^{E}$. Notice that $T_{r}^{\cM}(\gamma) \subseteq \bigcup_{n \in \omega} \mathcal{P}([\gamma]^{n})$, hence it can be codded as a subset of $\gamma$ and therefore, by acceptability, it follows that $f(\gamma) < \kappa^{+}$.
	
	 Let $ C$ be a club subset of the club of closure points of $f$ and such that $\gamma \in C $ implies $\gamma = h_{1}^{\cM}(\gamma \cup \{p_{1}^{\cM}\}) \cap \kappa^{+}$. We will verify that $C$ is the club we sought. 
	
	 Let $ \gamma \in C $. Let $\pi:\cN\rightarrow J_{\beta}^{E}$ be the inverse of the Mostowisk collapse of $h_{1}^{\cM}(\gamma \cup \{p_{1}^{\cM}\})$. Then for each $ \xi < \gamma $ we have $T_{r}^{\cN}(\xi)=T_{\pi(r)}^{\cM}(\xi) \in J_{\gamma}^{E} = \cN\restriction \gamma$. Therefore $\rho_{1}(\cN) \geq \gamma$. 
	 
	 Notice that by a standard diagonal argument  $a = \{ \xi \in \gamma \mid \xi \not\in h^{\cN_{\gamma}}_{1}(\xi,p_{1})\} \not\in \cN_{\gamma}$ since $\cN_{\gamma}=h_{1}[\gamma \cup \{p_{1}\}]$, thus $\rho_{1}^{\cM}\geq \gamma$.
	\end{proof}
	
\begin{lemma}\label{NoSubcompact} Let $L[E]$ be an extender model that is weakly iterable. Given $\kappa \in \ord$ if $\kappa$ is a successor cardinal in $L[E]$  then the following are equivalent:
	\begin{itemize}
		\item[(a)] $\langle L_{\kappa^{+}}[E],{\in},\langle L_\beta[E]\mid\beta\in\kappa^{+} \rangle\rangle\models\lcc(\kappa^{+},\kappa^{++}]$.
		\item[(b)] 	$L[E] \models ( \kappa ~ \text{is not a subcompact cardinal})$. 
	\end{itemize}
	and if $\kappa$ is a limit cardinal of uncountable cofinality, then  $${\langle L_{\kappa^{+}}[E],{\in},\langle L_\beta[E]\mid\beta\in\kappa^{+} \rangle\rangle\models\lcc(\kappa,\kappa^{+}]}.$$
	 \end{lemma}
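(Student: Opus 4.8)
The plan is to verify, in both cases of the lemma, that $\lcc(\lambda,\lambda^{+}]$ holds for the canonical filtration $\vec M=\langle L_{\beta}[E]\mid\beta<\lambda^{+}\rangle$, where $\lambda$ denotes $\kappa^{+}$ in the first (successor) case and $\kappa$ in the second. The implication (a)$\Rightarrow$(b) of the first part is vacuous, since a successor cardinal is not inaccessible, hence not subcompact, so (b) holds outright; thus in each case it suffices to establish (a). So fix $\alpha$ in the relevant interval and a sequence $\mathcal F=\langle(F_{n},k_{n})\mid n<\omega\rangle$ as in Definition~\ref{LCCupto}, and code $\mathcal F$ by a single predicate $F$ on $M_{\alpha}=L_{\alpha}[E]$; as $|M_{\alpha}|\le\lambda$, this $F$ is coded by a subset of $\lambda$, so that $F$, $M_{\alpha}$ and $\vec M\restriction\alpha$ all lie in some $J_{\delta}^{E}$ with $\alpha<\delta<\lambda^{+}$. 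Passing to a suitable reduct, one then fixes a premouse $\mathcal M$ with $M_{\alpha},\vec M\restriction\alpha,F\in\mathcal M$, $\rho_{1}(\mathcal M)=\lambda$, and $\mathcal M=h_{1}^{\mathcal M}[\lambda\cup\{q\}]$ for a parameter $q$ containing $p_{1}^{\mathcal M}$ together with $\alpha$, $\vec M\restriction\alpha$ and $F$.

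For $\gamma<\lambda$ put $\widetilde B_{\gamma}:=h_{1}^{\mathcal M}[\gamma\cup\{q\}]$, $B_{\gamma}:=\widetilde B_{\gamma}\cap M_{\alpha}$, and let $\mathcal B_{\gamma}=\langle B_{\gamma},\in,\vec M\restriction(B_{\gamma}\cap\ord),(F_{n}\cap(B_{\gamma})^{k_{n}})_{n}\rangle$ be the induced structure. Since satisfaction in $\langle M_{\alpha},\in,\vec M\restriction\alpha,(F_{n})_{n}\rangle$ is $\Delta_{1}$ over $\mathcal M$ from parameters among the coordinates of $q$, the Tarski--Vaught criterion together with $\Sigma_{1}$-elementarity of $\widetilde B_{\gamma}$ in $\mathcal M$ gives $\mathcal B_{\gamma}\prec\langle M_{\alpha},\in,\vec M\restriction\alpha,(F_{n})_{n}\rangle$; moreover $|B_{\gamma}|\le|\gamma|+\aleph_{0}<\lambda$, $\gamma\subseteq\widetilde B_{\gamma}$ so $\gamma\subseteq B_{\gamma}$, and $\langle\widetilde B_{\gamma}\mid\gamma<\lambda\rangle$ is $\subseteq$-increasing and continuous with union $\mathcal M$, whence $\langle B_{\gamma}\mid\gamma<\lambda\rangle$ is $\subseteq$-increasing and continuous with union $M_{\alpha}$. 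So clauses (3)(a)--(c) and (3)(e) of Definition~\ref{LCCupto} will hold once the chain is reindexed correctly, and the whole difficulty sits in clause~(3)(d): that $\clps(\mathcal B_{\gamma})$ is of the form $\langle L_{\bar\gamma}[E],\in,\langle L_{\delta}[E]\mid\delta<\bar\gamma\rangle\rangle$, equivalently that $\mathcal N_{\gamma}:=\clps(\widetilde B_{\gamma})$ is an initial segment of $L[E]$.

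I would secure this differently in the two cases. In the limit case the cardinals of $L[E]$ below $\kappa$ form a club, and at each such cardinal $\mu$ one has $E_{\mu}=\varnothing$ by the Remark following Lemma~\ref{Collapse}; intersecting the club of such $\mu$ with the club of closure points of the Skolem function and with the club produced by the computation in the proof of Lemma~\ref{Club} (applied to the present $\mathcal M$, whose height now lies in $(\kappa,\kappa^{+})$), one obtains a club $D\subseteq\kappa$ of cardinals $\mu$ for which $\crit(j_{\mu})=\mu$ and $\rho_{1}(\mathcal N_{\mu})=\mu$, where $j_{\mu}\colon\mathcal N_{\mu}\to\mathcal M$ is the anticollapse; since $E_{\mu}=\varnothing$, the condensation argument of Lemma~\ref{Condensation} then yields $\mathcal N_{\mu}\triangleleft\mathcal M\triangleleft L[E]$. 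One next sets $B_{\beta}:=\widetilde B_{\mu(\beta)}\cap M_{\alpha}$, with $\mu(\beta)$ the least element of $D$ that is $\ge\beta$; because $D$ is closed and cofinal in $\kappa$, a short analysis of limit stages shows $\langle B_{\beta}\mid\beta<\kappa\rangle$ is $\subseteq$-continuous, which covers singular $\kappa$ of uncountable cofinality as well as regular $\kappa$. The one remaining value $\alpha=\kappa^{+}$ needs no condensation: a standard L\"owenheim--Skolem argument (using $\cf(\kappa^{+})>\omega$ to close off at limits) produces a club of $\gamma<\kappa^{+}$ with $\langle L_{\gamma}[E],\in,\vec M\restriction\gamma,(F_{n}\cap(L_{\gamma}[E])^{k_{n}})_{n}\rangle\prec\langle M_{\kappa^{+}},\in,\vec M\restriction\kappa^{+},(F_{n})_{n}\rangle$, and for such $\gamma$ the structure $L_{\gamma}[E]$ is already transitive, so (3)(d) holds with $\bar\gamma=\gamma$. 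In the successor case, $(\kappa,\kappa^{+})$ contains no cardinals, so instead one applies Lemma~\ref{Club} directly: since $\kappa$, being a successor cardinal, is not subcompact, Lemma~\ref{Club} yields a club $C\subseteq\kappa^{+}$ with $\rho_{1}(\mathcal N_{\gamma})=\gamma$ for all $\gamma\in C$, and, after thinning $C$ to consist of closure points (so $\crit(j_{\gamma})=\gamma$), the condensation argument of Lemma~\ref{Condensation} — using that $\gamma$, lying in $(\kappa,\kappa^{+})$, is no cardinal of $L[E]$, so that the exceptional branch of the dichotomy cannot arise — gives $\mathcal N_{\gamma}\triangleleft L[E]$ for $\gamma\in C$. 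Enumerating $C$ increasingly as $\langle\gamma_{\xi}\mid\xi<\kappa^{+}\rangle$ and taking $\mathcal B'_{\xi}:=\mathcal B_{\gamma_{\xi}}$ gives a witnessing sequence of the correct length $|\alpha|=\kappa^{+}$ (note $\xi\le\gamma_{\xi}\subseteq B_{\gamma_{\xi}}$); the endpoint $\alpha=\kappa^{++}$ is handled identically, with $\kappa^{++}$ in place of $\kappa^{+}$ throughout.

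The main obstacle is clause~(3)(d): guaranteeing that the transitive collapses of the hulls are genuine initial segments of $L[E]$ and not proper segments of ultrapowers. That is exactly where the hypotheses are used — non-subcompactness of $\kappa$, via Lemma~\ref{Club}, in the successor case, and the absence of extenders at cardinals, via the Remark, in the limit case — and it is also where some care is needed as to which condensation statement one invokes: Lemma~\ref{Condensation} is stated only for $\crit(j)<\kappa$ and $\mathcal M$ of height in $(\kappa^{+},\kappa^{++})$, whereas its comparison argument, which is what one actually needs, applies unchanged in the broader configurations appearing above. A secondary, purely bookkeeping matter is making the witnessing chain $\subseteq$-continuous of exactly length $|\alpha|$ — handled by the ``offset to the nearest point of $D$'' device in the limit case and by reindexing along $C$ in the successor case — together with the two endpoints $\alpha\in\{\kappa^{+},\kappa^{++}\}$, treated as above.
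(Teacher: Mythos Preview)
There is a genuine gap in your $(b)\Rightarrow(a)$ argument for the successor case. You claim that for $\gamma\in(\kappa,\kappa^{+})$, since $\gamma$ is not a cardinal of $L[E]$, ``the exceptional branch of the dichotomy cannot arise.'' This is incorrect: extenders on the $L[E]$-sequence are routinely indexed at non-cardinals (indeed, under Jensen indexing the index of a nonempty $E_\gamma$ is \emph{never} an $L[E]$-cardinal, by Lemma~\ref{Collapse}), and Lemma~\ref{Condensation} says precisely that $\mathcal N_\gamma\triangleleft\mathcal M$ \emph{fails} whenever $E_\gamma\neq\emptyset$. The paper does not obtain $E_\gamma=\emptyset$ from non-cardinality; it invokes the Schimmerling--Zeman theorem (Theorem~1 of \cite{MR1860606}), which says that if $\kappa$ is not subcompact then $\{\gamma<\kappa^{+}\mid E_\gamma=\emptyset\}$ contains a club. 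That theorem is the substantive use of hypothesis~(b) in this direction, and your argument supplies no replacement for it.

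Your treatment of the limit-cardinal case does match the paper's: the cardinals below $\kappa$ form a club on which $E_\mu=\emptyset$, and then condensation applies. On $(a)\Rightarrow(b)$: you are formally correct that, for $\kappa$ a successor cardinal as literally stated, subcompactness is impossible and the implication is vacuous. The paper, however, proves $\neg(b)\Rightarrow\neg(a)$ in earnest---again via Schimmerling--Zeman, this time to produce stationarily many $\gamma<\kappa^{+}$ with $E_\gamma\neq\emptyset$, forcing the collapses $\mathcal N_\gamma$ to disagree with $L[E]$ at level $\gamma{+}1$---consistent with Theorem~A being stated for arbitrary infinite $\kappa$. So while your dismissal is technically valid for the lemma as written, it bypasses the actual content the paper establishes.
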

\begin{proof}  Let $\alpha \in (\kappa^{+},\kappa^{++})$. Let $\beta \geq \alpha$ such that $\beta \in (\kappa^{+},\kappa^{++})$ and $\rho_{1}((J_{\beta}^{E},\in,E\restriction\beta,E_{\beta}) ) = \kappa^{+}$.  
    Let $\cM = (J_{\beta}^{E},\in,E\restriction\beta,E_{\beta})$, \[D= \{ \gamma <\kappa^{+} \mid h_{1}(\gamma \cup \{p_{1}\})\cap \kappa^{+}=\gamma \}\] and for each $\gamma \in D$ let $\cN_{\gamma} = \clps(h_{1}(\gamma \cup \{p_{1}\}))$. By Lemma \ref{Club} $D$ contains a club $ F \subseteq D$  such that  $\gamma \in F$ implies that there are $\pi_{\gamma}:N_{\gamma} \rightarrow J_{\beta}^{E}$ where $\pi_{\gamma}$ is $\Sigma^{(1)}_{1}$, $\pi_{\gamma}\restriction \gamma = id \restriction \gamma$, $\pi_{\gamma}(\gamma)  = \kappa^{+}$, $\rho_{1}(N_{\gamma}) = \gamma$. We can also assume that $\gamma \in F $ implies $L_{\gamma}[E]=J_{\gamma}^{E}$.  
    
    We verify first the implication $\neg(b) \Rightarrow \neg(a)$.
    
    Suppose that $\langle B_{\gamma} \mid \gamma < |\alpha| \rangle $ is a continuous chain of elementary submodels of $\cN = \langle J_{\alpha}^{E},\in,E|\alpha,E_{\alpha} \rangle$ such that for all $\gamma < |\alpha|$ we have $|B_{\gamma}| < |\alpha|$ and $\bigcup_{\gamma < |\alpha|} B_{\gamma} = \cM$. We will verify that for stationary many $\gamma$'s we have that $\clps(B_{\gamma}) $ is not of the form $J_{\zeta}^{E}$ for any $\zeta$.
    
    As $|\alpha|=\kappa$ is a regular cardinal it follows that for club many $\gamma$'s we have $B_{\gamma} = h_{1}^{\cM}(\gamma\cup\{p_{1}\}) \cap \cN = \pi^{-1}(L_{\alpha}[E])$ 
    
    From $\neg(b)$ by Schimmerling-Zeman carachterization of $\square_{\kappa}$ (see \cite{SquareinK}][Theorem~0.1]), we can assume that for stationary many $\gamma \in F$ we have $E_{\gamma}^{\cM} \neq \emptyset$ . Notice that $N_{\gamma} \models ``\gamma \text{ is a cardinal}"$ and therefore $E^{\cN_{\gamma}}=\emptyset$ by Proposition \ref{Collapse}.  On the other hand, from Proposition \ref{Collapse} we have  $L_{\gamma+1}^{E} \models ``\gamma \text{ is not a cardinal}"$. Since $L_{\gamma+1}[E]  \subseteq J_{\gamma+1}[E]$ it follows that  $\cN_{\gamma}  = \clps(B_{\gamma})$ is different from $J_{\zeta}^{E}$ for every $\zeta > \gamma$.  Therefore $\lcc(\kappa^{+},\kappa^{++})$ does not hold. 
    
    Next we verify $(b) \rightarrow (a)$. Suppose $\mathfrak{N} = \langle L_{\alpha}^{E},\in, E | \alpha, E_{\alpha}, (\mathcal{F}_{n} \mid n \in \omega  ) \rangle $. We can assume without loss of generality that $\beta$ is large enough so that $\mathfrak{N} \in \cM$. 
    


 We verify that $\langle L_{\tau}[E] \mid \tau < \kappa^{++} \rangle $  witnesses $\lcc(\kappa^{+},\kappa^{++}] $ at $\alpha$. 
Let $ \vec{\mathcal{R}} := \{ h_{1}^{\mathcal{M}}[\gamma \cup \{p_{1}^{\mathcal{M}} \}\cup \{u^{\mathcal{M}}_{1}\}] \mid \gamma < \kappa^{+} \}$ where for any given $X \subseteq \mathcal{M}$, $h_{1}^{\mathcal{M}}[X]$ denotes the $\Sigma_{1}$-Skolem hull of $X$ in $\mathcal{M}$ and $ p^{\mathcal{M}_{1}}$ is the first standard parameter. 
It follows that 
	$$C= \{ \gamma < \kappa^{+}  \mid \crit(\clps(h_{1}^{\mathcal{M}}[\gamma \cup \{p_{1}^{\mathcal{M}}, u^{\mathcal{M}}_{1}\}])) = \gamma\} $$ is a club
	and by lemma \ref{Club} 
	$$D=\{\gamma < \kappa^{+} \mid \rho_{1}(\mathcal{N}_{\gamma})=\gamma\} $$ is also a club. From Theorem 1 in \cite{MR1860606} and $(b)$  it follows that there is a club $F \subseteq \{ \gamma < \kappa^{+} \mid E_{\gamma}=\emptyset   \} $.
By Lemma  \ref{Condensation}, 	for every $\gamma \in D \cap F$ we have $N_{\gamma}\triangleleft \mathcal{M}$. We have $L_{\alpha}[E] \triangleleft \cM$, therefore $\clps(L_{\alpha}[E]\cap h_{1}^{\cM}(\gamma \cup \{p_{1}\})) \triangleleft \cN_{\gamma}$, hence $ \clps(L_{\alpha}[E]\cap h_{1}^{\cM}(\gamma \cup \{p_{1}\}))=\clps(B_{\gamma}) \triangleleft \cM$  which verifies the equivalence between $(b) \rightarrow (a)$

Now suppose $\kappa$ is a limit cardinal.  The same argument used for the implication $(b) \Rightarrow (a)$ follows  with the difference that we do not use Theorem 1 of \cite{MR1860606}, instead we use that the cardinals below $\kappa$ form a club and that for every cardinal $\mu< \kappa$ we have $E_{\mu}=\emptyset$. 
\end{proof} 

\begin{defn}
 Given two predicates $A$ and $E$ we say that $A$ is equivalent to $E$ iff $J_{\alpha}^{A}=J_{\alpha}^{E}$ for all $\alpha < \ord$. 
\end{defn}

\begin{cor}\label{IncompatiblePredicates}
If $A \subseteq \ord$ is such that \begin{itemize}
		\item $L[A] \models  (\kappa$ is a subcompact cardinal $),$ and
		\item $\langle L_{\kappa^{++}}[A],\in, \langle L_{\beta}[A] \mid \beta < \kappa^{++} \rangle \rangle \models \lcc(\kappa^{+},\kappa^{++}]$, 
	\end{itemize} then there is no extender sequence such that $L[E]$ is weakly iterable and $E$ is equivalent to $A$. 
\end{cor}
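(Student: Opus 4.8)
The plan is to argue by contradiction, using Theorem~A (equivalently, Lemma~\ref{NoSubcompact}) as a black box. Suppose toward a contradiction that $E$ is an extender sequence equivalent to $A$ such that $L[E]$ is weakly iterable. Since $E$ is equivalent to $A$, we have $J_{\alpha}^{A}=J_{\alpha}^{E}$ for every ordinal $\alpha$, and in particular $L_{\beta}[A]=L_{\beta}[E]$ for all $\beta$ (the $L$-hierarchy depends only on the $J$-hierarchy, or directly on the predicate up to the equivalence), so $L[A]=L[E]$ and the two models agree on the structures $\langle L_{\beta}[A]\mid\beta<\delta\rangle=\langle L_{\beta}[E]\mid\beta<\delta\rangle$ for every $\delta$. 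Hence the two displayed hypotheses on $A$ transfer verbatim to $E$: namely $L[E]\models(\kappa$ is a subcompact cardinal$)$ and $\langle L_{\kappa^{++}}[E],\in,\langle L_{\beta}[E]\mid\beta<\kappa^{++}\rangle\rangle\models\lcc(\kappa^{+},\kappa^{++}]$.

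Next I would observe that $\kappa$ being a subcompact cardinal in $L[E]$ forces $\kappa$ to be a successor cardinal there. Indeed subcompactness of $\kappa$ implies $\square_{\kappa}$ fails in $L[E]$ by Schimmerling--Zeman's characterization (used already inside the proof of Lemma~\ref{NoSubcompact}), but more to the point Lemma~\ref{NoSubcompact} is stated in two cases, for $\kappa$ a successor cardinal and for $\kappa$ a limit cardinal of uncountable cofinality, and in the limit case it asserts outright that $\lcc(\kappa,\kappa^{+}]$ holds, which does not directly contradict anything; so I must make sure the subcompact hypothesis puts us in the successor-cardinal case. A subcompact cardinal is inaccessible (in particular regular and a strong limit) in any model where it exists, so $\kappa$ is certainly not a successor cardinal in $V$, but the claim we need is about $L[E]$: if $\kappa$ were a limit cardinal of $L[E]$ then, being regular in $L[E]$, it would be weakly inaccessible in $L[E]$, hence inaccessible in $L[E]$ since $L[E]\models\gch$, and then $\square_{\kappa}$ would fail trivially; but Lemma~\ref{NoSubcompact} only covers the successor case and the ``limit of uncountable cofinality'' case, so the cleanest route is: a subcompact cardinal is inaccessible, hence a limit cardinal in $L[E]$, hence (Lemma~\ref{NoSubcompact}, limit case) $\lcc(\kappa,\kappa^{+}]$ holds in the appropriate structure — but this does not contradict the hypothesis on $A$, so I instead need to directly contradict condensation at $\kappa$ via subcompactness. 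The right tool here is really Theorem~B, not Lemma~\ref{NoSubcompact}: Theorem~B says that if $L[E]$ has Jensen's $\lambda$-indexing, every countable elementary submodel is $(\omega_{1}+1,\omega_{1})$-iterable, and $\kappa$ is subcompact in $L[E]$, then there is no $\vec{M}$ witnessing $\lcc(\kappa^{+},\kappa^{++})$ with $M_{\kappa^{++}}=H_{\kappa^{++}}^{L[E]}$ and $M_{\kappa^{+}}=H_{\kappa^{+}}^{L[E]}$. Since $H_{\kappa^{++}}^{L[E]}=L_{\kappa^{++}}[E]$ and $H_{\kappa^{+}}^{L[E]}=L_{\kappa^{+}}[E]$ and $\langle L_{\beta}[E]\mid\beta<\kappa^{++}\rangle$ is such a $\vec{M}$, this is the desired contradiction.

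So the actual argument is: from equivalence of $A$ and $E$, transfer the two hypotheses to $E$; note that weak iterability of $L[E]$ implies in particular that every countable elementary submodel of $L[E]$ is $(\omega_{1}+1,\omega_{1})$-iterable (this is essentially the definition of weak iterability restricted to countable hulls, via Definition~\ref{weaklyit}), and that ``weakly iterable'' extender models in this paper are taken with Jensen's $\lambda$-indexing (or one reduces to it); then apply Theorem~B with the filtration $\vec{M}=\langle L_{\beta}[E]\mid\beta<\kappa^{++}\rangle$, noting $M_{\kappa^{+}}=L_{\kappa^{+}}[E]=H_{\kappa^{+}}^{L[E]}$ and $M_{\kappa^{++}}=L_{\kappa^{++}}[E]=H_{\kappa^{++}}^{L[E]}$, to conclude that $\langle L_{\kappa^{++}}[E],\in,\vec M\rangle\not\models\lcc(\kappa^{+},\kappa^{++})$, hence also not $\lcc(\kappa^{+},\kappa^{++}]$, contradicting the transferred second hypothesis on $A$.

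The main obstacle I expect is bookkeeping around the indexing convention and the precise form of the iterability hypothesis: Corollary~\ref{IncompatiblePredicates} as stated only assumes ``$L[E]$ is weakly iterable,'' whereas Theorem~B assumes $\lambda$-indexing plus countable iterability, so I need to either (i) argue that weak iterability as defined in Definition~\ref{weaklyit} already delivers the $(\omega_{1}+1,\omega_{1})$-iterability of all countable elementary submodels that Theorem~B needs — which it does, since any countable elementary submodel of $L[E]$ embeds into some $(J_{\alpha}^{E},\in,E\restriction\alpha,E_{\alpha})$, so its transitive collapse is $(\omega_1+1,\omega_1)$-iterable by Definition~\ref{weaklyit} — or (ii) invoke standard inner model theory to pass to a $\lambda$-indexed equivalent of $E$; option (i) is cleaner and I would pursue it. A secondary point to check is that the equivalence $J_{\alpha}^{A}=J_{\alpha}^{E}$ really does give $L_{\beta}[A]=L_{\beta}[E]$ for all $\beta$ so that the $\lcc$ hypothesis, which is phrased with the $L$-hierarchy filtration, transfers; this is routine from the definitions but worth a sentence.
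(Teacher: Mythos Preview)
Your first paragraph is exactly right, and in the paper that paragraph \emph{is} the entire proof: the corollary is placed immediately after Lemma~\ref{NoSubcompact} (Theorem~A) with no argument given, because once the hypotheses on $A$ transfer to $E$ via $J_{\alpha}^{A}=J_{\alpha}^{E}$, the implication $\neg(b)\Rightarrow\neg(a)$ of Theorem~A yields the contradiction directly. Nothing more is needed.

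The detour in your second and third paragraphs introduces a genuine gap. You abandon Theorem~A because of the phrase ``if $\kappa$ is a successor cardinal'' in the lemma statement, but that phrase is a slip: Theorem~A in the introduction is stated for an arbitrary infinite cardinal, and if you read the proof of $\neg(b)\Rightarrow\neg(a)$ in Lemma~\ref{NoSubcompact} you will see it never uses that $\kappa$ is a successor---it only uses the Schimmerling--Zeman fact that subcompactness of $\kappa$ gives stationarily many $\gamma<\kappa^{+}$ with $E_{\gamma}\neq\emptyset$, together with the collapse analysis, and those work for any cardinal $\kappa$. So Theorem~A already delivers the failure of $\lcc(\kappa^{+},\kappa^{++}]$ for the filtration $\langle L_{\beta}[E]\mid\beta<\kappa^{++}\rangle$ whenever $\kappa$ is subcompact in a weakly iterable $L[E]$.

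By contrast, routing through Theorem~B costs you an extra hypothesis the corollary does not carry: Theorem~B (and Corollary~\ref{NoWitness}) explicitly assume Jensen's $\lambda$-indexing, whereas Corollary~\ref{IncompatiblePredicates} does not. Your suggestion to ``reduce to a $\lambda$-indexed equivalent of $E$'' is not a step you can take for free---changing the indexing changes the premouse hierarchy, and there is no blanket argument that an arbitrary weakly iterable extender sequence equivalent to $A$ can be replaced by a $\lambda$-indexed one with the same $J$-hierarchy. So as written, your argument only proves the corollary under an additional assumption. It is also a forward reference in the paper's logical order, since Theorem~B / Corollary~\ref{NoWitness} is stated after the corollary you are proving. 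Drop paragraphs two through four and keep paragraph one; that is the paper's proof.
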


\begin{remark} In \cite{FHl} from the hypothesis that there is $\kappa$ a subcompact cardinal in $V$, it is obtained $A \subseteq \ord$ in a class generic extension which satisfies the hypothesis of corollary \ref{IncompatiblePredicates}.
\end{remark}

\begin{cor}
	\label{NoWitness} Suppose that $L[E]$ is a extender model with Jensen's $\lambda$-indexing and for every ordinal $\alpha$ the premouse $\mathcal{J}_{\alpha}^{E}$ is weakly iterable. If $\kappa$ is an ordinal such that $$L[E] \models \kappa \text{ is a subcompact cardinal,}$$ then for no $\vec{M}=\langle M_{\alpha} \mid \alpha < \kappa^{++} \rangle $ with $M_{\kappa^{+}}=H_{\kappa^{+}} $, $M_{\kappa^{++}}=H_{\kappa^{++}} $ and $\langle H_{\kappa^{++}},\in,\vec{M}\rangle \models \lcc(\kappa^{+},\kappa^{++})$. 
\end{cor}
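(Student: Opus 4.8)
We argue by contradiction, reducing to the situation handled in the proof of Lemma~\ref{NoSubcompact} and then invoking Lemma~\ref{Condensation}. So suppose $\vec M=\langle M_\alpha\mid\alpha<\kappa^{++}\rangle$ were a filtration with $M_{\kappa^+}=H_{\kappa^+}$, $M_{\kappa^{++}}=H_{\kappa^{++}}$ and $\langle H_{\kappa^{++}},\in,\vec M\rangle\models\lcc(\kappa^+,\kappa^{++})$. Since $L[E]$ is a $\lambda$-indexed fine-structural extender model, $\gch$ holds in it, so $H_{\kappa^+}=J_{\kappa^+}^E$ and $H_{\kappa^{++}}=J_{\kappa^{++}}^E$, and every $M_\alpha$ is a transitive set of cardinality at most $\kappa^+$, hence an element of $J_{\kappa^{++}}^E$. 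The first step is to observe, by a standard closure-point argument, that $\vec M$ and the canonical stratification $\langle J_\alpha^E\mid\alpha<\kappa^{++}\rangle$ are cofinal in one another inside $J_{\kappa^{++}}^E$, so that $D:=\{\alpha<\kappa^{++}\colon M_\alpha=J_\alpha^E\}$ contains a club, and likewise $D\cap\kappa^+$ is club in $\kappa^+$.

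Next I would collect the two consequences of $\kappa$ being subcompact in $L[E]$. First, $\square_\kappa$ fails there by the Schimmerling--Zeman characterisation (\cite{SquareinK}); exactly as in the proof of Lemma~\ref{NoSubcompact} this lets me fix $\cM=\langle J_\beta^E,\in,E\restriction\beta,E_\beta\rangle$ with $\beta\in D\cap(\kappa^+,\kappa^{++})$ and $\rho_1(\cM)=\kappa^+$ such that $S:=\{\gamma<\kappa^+\colon E_\gamma\neq\emptyset\}$ is stationary in $\kappa^+$. Second --- and this is the new ingredient over Lemma~\ref{NoSubcompact} --- I would fix a witness $j\colon(H_{\bar\kappa^+},\in,\bar A)\to(H_{\kappa^+},\in,A)$ to the subcompactness of $\kappa$, with $\crit(j)=\bar\kappa$, $j(\bar\kappa)=\kappa$, where $A$ codes the pair $\langle\vec M\restriction\kappa^+,\ \{\gamma<\kappa^+\colon E_\gamma\neq\emptyset\}\rangle$.

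Now fix $\alpha\in D\cap(\beta,\kappa^{++})$ which is a limit point of $D$ and large enough that $\langle J_\rho^E\mid\rho\leq\beta\rangle\in M_\alpha=J_\alpha^E$, and apply $\lcc(\kappa^+,\kappa^{++})$ for $\vec M$ at $\alpha$ with respect to a sequence $\mathcal F$ whose predicates code: the $E$-structure $E\restriction\alpha$ of $J_\alpha^E$; the club $D\cap\alpha$; the mouse $\cM$ together with its $\Sigma_1$-Skolem function $h_1^{\cM}$ and standard parameter $p_1^{\cM}$; the embedding $j$ and the predicate $\bar A$; and the constants $\kappa,\kappa^+$. Let $\langle\mathcal B_\delta\mid\delta<\kappa^+\rangle$ be the resulting witness, with $\mathcal B_\delta=\langle B_\delta,\in,\vec M\restriction(B_\delta\cap\ord),(F_n\cap B_\delta^{k_n})_n\rangle$, and let $\pi_\delta\colon M_{\bar\delta}\to M_\alpha$ be the associated anti-collapse, so that clause~(3)(d) of Definition~\ref{LCCupto} identifies $\clps(B_\delta)$ with the filtration level $M_{\bar\delta}$. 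I would then choose a club $C\subseteq\kappa^+$ such that for every $\delta\in C$: $B_\delta\cap\kappa^+=\delta$, so that $\crit(\pi_\delta)=\delta$ and $\pi_\delta(\delta)=\kappa^+$; $\cM,p_1^{\cM}\in B_\delta$ and $B_\delta$ is closed under $h_1^{\cM}$, so that $\cN_\delta:=\clps(\cM\cap B_\delta)=\clps\bigl(h_1^{\cM}(\delta\cup\{p_1^{\cM}\})\bigr)=\pi_\delta^{-1}(\cM)$ and, by the argument of Lemma~\ref{Club}, $\rho_1(\cN_\delta)=\delta$ and $\cN_\delta\restriction\delta=J_\delta^E$. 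Using that clause~(3)(d) makes $\pi_\delta$ preserve the filtration predicate and $\pi_\delta(\delta)=\kappa^+$ together with $M_{\kappa^+}=J_{\kappa^+}^E$, one gets $M_\delta=\pi_\delta^{-1}(M_{\kappa^+})=J_\delta^E$; moreover the reflected copy $\pi_\delta^{-1}(j)$ is an elementary embedding of $(J_{\bar\kappa^+}^E,\bar A)$ into $(J_\delta^E,\pi_\delta^{-1}(A))$, which --- together with the coding of $D\cap\alpha$ into $\mathcal F$ --- should force $\bar\delta\in D$, i.e.\ $M_{\bar\delta}=J_{\bar\delta}^E$. Granting this, clause~(3)(d) gives $\clps(B_\delta)=J_{\bar\delta}^E\triangleleft L[E]$, and since $\cM=J_\beta^E$ is an initial segment of $M_\alpha$ it follows by elementarity of $\pi_\delta$ that $\cN_\delta=\pi_\delta^{-1}(\cM)$ is an initial segment of $J_{\bar\delta}^E$, whence $\cN_\delta\triangleleft\cM$ (as $\bar\delta<\kappa^+<\beta$). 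Finally, picking $\delta\in S\cap C$ --- nonempty since $S$ is stationary --- Lemma~\ref{Condensation}, applied to $\cN_\delta\triangleleft\cM$ exactly as in the proof of Lemma~\ref{NoSubcompact}, forces $E_\delta=\emptyset$, contradicting $\delta\in S$.

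The one genuinely delicate point, and the step I expect to be the main obstacle, is the claim that $\bar\delta$ may be taken to lie in $D$, so that the $\lcc$-clause collapses $B_\delta$ onto a genuine initial segment of $L[E]$ rather than onto some arbitrary filtration level: for a general filtration $\vec M$ this can fail, and it is only the combination of the endpoint constraints $M_{\kappa^+}=H_{\kappa^+}$, $M_{\kappa^{++}}=H_{\kappa^{++}}$, the reflecting embedding $j$, and the weak iterability of $L[E]$ (through Lemma~\ref{Condensation}) that excludes the ``wrong'' behaviour. Making this precise --- choosing $\mathcal F$, and the bookkeeping inside it, so that the collapses $\clps(B_\delta)$ for a club of $\delta$ are forced to be canonical levels --- is the bulk of the work; once it is in place, the reduction to Lemma~\ref{Condensation} and the argument of Lemma~\ref{NoSubcompact} is routine.
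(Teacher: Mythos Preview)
Your reduction to the situation of Lemma~\ref{NoSubcompact} is the right idea, but the step you flag as ``genuinely delicate'' --- arranging that the collapse targets $\bar\delta$ land in the agreement set $D$ --- is both the crux and the place where your proposal goes astray. The subcompactness embedding $j$ is a red herring here: reflecting $j$ through $\pi_\delta$ gives you an embedding into $J_\delta^E$ sitting \emph{inside} $\clps(B_\delta)=M_{\bar\delta}$, but says nothing about whether $M_{\bar\delta}$ itself is a $J^E$-level, which is what you need. No amount of bookkeeping in $\mathcal F$ around $j$ will force $\bar\delta\in D$.

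The actual resolution is much simpler and is what the paper does. From $M_{\kappa^+}=H_{\kappa^+}=J_{\kappa^+}^E$ one gets, as you note, that $D\cap\kappa^+$ contains a club in $\kappa^+$. Now observe that for \emph{any} $\lcc$-chain $\langle B_\delta\mid\delta<\kappa^+\rangle$ the map $\delta\mapsto\bar\delta$ is increasing (the $B_\delta$ are $\subseteq$-increasing), continuous (the collapse of a continuous union is the union of the collapses), and cofinal in $\kappa^+$ (since $\delta\subseteq B_\delta$ gives $\bar\delta\geq\delta$). Hence $\{\bar\delta:\delta<\kappa^+\}$ is club in $\kappa^+$, so $\{\delta:\bar\delta\in D\}$ contains a club. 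For those $\delta$ you have $\clps(B_\delta)=M_{\bar\delta}=J_{\bar\delta}^E$ automatically, and now intersecting with the stationary set $S=\{\gamma:E_\gamma\neq\emptyset\}$ and invoking either Lemma~\ref{Condensation} or directly the ``$\cN_\delta\models\delta$ is a cardinal'' versus ``$J_{\bar\delta}^E\models\delta$ is not a cardinal'' clash gives the contradiction.

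The paper packages exactly this argument abstractly: it isolates the notion that a filtration \emph{strongly fails to condensate}, records that the $\neg(b)\Rightarrow\neg(a)$ half of Lemma~\ref{NoSubcompact} shows $\langle J_\alpha^E\rangle$ strongly fails when $\kappa$ is subcompact, and then proves a short transfer lemma --- any two filtrations with the same value at $\kappa^+$ agree on a club both above and below $\kappa^+$, and the collapse-target map of an $\lcc$-chain is club-valued --- so strong failure of one filtration rules out $\lcc$ for every other. Drop $j$ and its attendant machinery from your argument; what remains is the paper's proof.
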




\begin{defn} We say that a nice filtration $\vec{M}$ for $H_{\kappa^{+}}$ strongly fails to condensate iff there is a stationary set $S \subseteq \kappa^{+}$ such that for any $\beta \in S $ and any continuous chain $\vec{B}$ of elementary submodels of $M_{\beta}$ there are stationary many points $\alpha$  where $B_{\alpha}$ does not condensate.
\end{defn}
\begin{lemma} If $\vec{M}$ is a filtration for $H_{\kappa^{+}}$ with $M_{\kappa} = H_{\kappa}$ that strongly fails  to condensate, then there is no filtration $\vec{N}$ of $H_{\kappa^{+}}$ with $N_{\kappa}= H_{\kappa}$ that witnesses $\lcc(\kappa,\kappa^{+})$.
\end{lemma}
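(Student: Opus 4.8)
The plan is to combine two ingredients: any two filtrations of $H_{\kappa^{+}}$ agree on a club, and the elementary chains that an $\lcc(\kappa,\kappa^{+})$-witness $\vec N$ produces at such an agreement point $\beta$ are continuous chains of elementary submodels of $M_{\beta}$ all of whose members must condensate with respect to $\vec M$ — contradicting the assumed strong failure.

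First I would record the standard fact that, for filtrations $\vec M,\vec N$ of $H_{\kappa^{+}}$ with $M_{\kappa}=N_{\kappa}=H_{\kappa}$, the set $C:=\{\beta<\kappa^{+}\mid M_{\beta}=N_{\beta}\}$ contains a club: since $\kappa^{+}$ is regular and $|M_{\beta}|,|N_{\beta}|\le\kappa$, the map $\beta\mapsto\min\{\delta\mid M_{\beta}\cup N_{\beta}\s M_{\delta}\cap N_{\delta}\}$ is a function from $\kappa^{+}$ to $\kappa^{+}$, and its closure points lie in $C$ by continuity of the two filtrations; replacing $C$ by $\acc(C)$, I may assume each $\beta\in C$ is a limit ordinal with $C\cap\beta$ club in $\beta$. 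Now assume for contradiction that $\vec N$ is a filtration of $H_{\kappa^{+}}$ with $N_{\kappa}=H_{\kappa}$ witnessing $\lcc(\kappa,\kappa^{+})$, and let $S$ be stationary witnessing that $\vec M$ strongly fails to condensate. Fix $\beta\in S\cap C$; then $\beta\in(\kappa,\kappa^{+})$ and $M_{\beta}=N_{\beta}$. Apply $\lcc(\kappa,\kappa^{+})$ for $\vec N$ at $\alpha:=\beta$ with the predicate sequence $\mathcal F$ whose first components code the relations $\{(\delta,x)\mid\delta<\beta,\ x\in M_{\delta}\}\s(M_{\beta})^{2}$ (i.e.\ $\vec M\restriction\beta$) and $C\cap\beta\s M_{\beta}$. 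This produces a $\s$-increasing continuous chain $\langle B_{\gamma}\mid\gamma<\kappa\rangle$ converging to $M_{\beta}=N_{\beta}$ with $\gamma\s B_{\gamma}$, $|B_{\gamma}|<\kappa$, $B_{\gamma}\prec\langle M_{\beta},{\in},\vec M\restriction\beta,\vec N\restriction\beta,C\cap\beta\rangle$, and, by clause~(d) of Definition~\ref{LCCupto}, $\clps(\langle B_{\gamma},{\in},\vec N\restriction(B_{\gamma}\cap\ord)\rangle)=\langle N_{\bar\gamma},{\in},\vec N\restriction\bar\gamma\rangle$, where $\bar\gamma=\otp(B_{\gamma}\cap\ord)<\kappa$; in particular the transitive collapse map $\pi_{\gamma}$ satisfies $\pi_{\gamma}(N_{\delta})=N_{\pi_{\gamma}(\delta)}$ for every ordinal $\delta\in B_{\gamma}$. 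Since $\langle B_{\gamma}\mid\gamma<\kappa\rangle$ is then a continuous chain of elementary submodels of $M_{\beta}$, the choice $\beta\in S$ yields stationarily many $\gamma<\kappa$ at which $B_{\gamma}$ fails to condensate with respect to $\vec M$.

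The proof is finished by showing that, for club-many $\gamma<\kappa$, one has $\clps(\langle B_{\gamma},{\in},\vec M\restriction(B_{\gamma}\cap\ord)\rangle)=\langle M_{\bar\gamma},{\in},\vec M\restriction\bar\gamma\rangle$. Since $M_{\beta}=\bigcup_{\delta<\beta}M_{\delta}$ ($\beta$ a limit, Convention~\ref{Union}) reflects into $B_{\gamma}$, we have $\clps(B_{\gamma})=\bigcup_{\delta\in B_{\gamma}\cap\ord}\pi_{\gamma}(M_{\delta})$, so it suffices to prove $\pi_{\gamma}(M_{\delta})=M_{\pi_{\gamma}(\delta)}$ for every ordinal $\delta\in B_{\gamma}$; this then also gives $\clps(B_{\gamma})=\bigcup_{\eta<\bar\gamma}M_{\eta}=M_{\bar\gamma}$ (as $\bar\gamma$ is a limit), hence $\bar\gamma\in C$. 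For $\delta\in B_{\gamma}\cap C$ one computes $\pi_{\gamma}(M_{\delta})=\pi_{\gamma}(N_{\delta})=N_{\pi_{\gamma}(\delta)}$ from clause~(d); here I would use that $C\cap\beta$ is a predicate of the structure (so $B_{\gamma}\cap C$ is cofinal in $B_{\gamma}\cap\ord$), that the defining property ``$\xi\in C\iff M_{\xi}=N_{\xi}$'' reflects into $B_{\gamma}$ and hence through $\pi_{\gamma}$, and that $\vec M$ is nice, in order to promote this identity from $B_{\gamma}\cap C$ to all of $B_{\gamma}\cap\ord$. This contradicts the previous paragraph. The hard part — and essentially the only nontrivial step — is exactly this transfer of condensation from $\vec N$ to $\vec M$: controlling the transitive collapse of the coded $\vec M$-predicate on the ``gaps'' of the club $C$, i.e.\ verifying $\pi_{\gamma}(M_{\delta})=M_{\pi_{\gamma}(\delta)}$ for the ordinals $\delta\in B_{\gamma}$ that lie off $C$, and thereby $\bar\gamma\in C$; everything else is routine club manipulation.
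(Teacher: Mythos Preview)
Your setup is correct and matches the paper's: the two filtrations agree on a club $D\subseteq\kappa^{+}$, and one fixes $\beta\in S\cap D$ and an $\lcc$-chain $\langle B_\gamma\mid\gamma<\kappa\rangle$ for $\vec N$ at $\beta$. Where you go astray is in the finish. The paper's (admittedly telegraphic) proof needs nothing more than the following: by clause~(d) one has $\clps(B_\gamma)=N_{\bar\gamma}$ with $\bar\gamma:=\otp(B_\gamma\cap\ord)<\kappa$, and the map $\gamma\mapsto\bar\gamma$ is continuous and cofinal in $\kappa$; on the other hand, since $M_\kappa=N_\kappa=H_\kappa$, the very same club argument you ran at level $\kappa^{+}$ (using that $\kappa$ is regular, as it is in the intended application) shows that $\{\eta<\kappa\mid M_\eta=N_\eta\}$ is club in $\kappa$. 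Intersecting, for club-many $\gamma<\kappa$ we get $\clps(B_\gamma)=N_{\bar\gamma}=M_{\bar\gamma}$, so $B_\gamma$ \emph{does} condensate with respect to $\vec M$, contradicting $\beta\in S$.

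Instead you aim for the much stronger pointwise identity $\pi_\gamma(M_\delta)=M_{\pi_\gamma(\delta)}$ for every $\delta\in B_\gamma\cap\ord$. This is not needed (the definition of ``strongly fails to condensate'' only asks whether $\clps(B_\gamma)$ is some $M_\xi$), and your sketch for it does not go through: from $\delta\in B_\gamma\cap C$ you only obtain $\pi_\gamma(M_\delta)=\pi_\gamma(N_\delta)=N_{\pi_\gamma(\delta)}$, and nothing forces $\pi_\gamma(\delta)$ to lie in the external club $C$. The reflection you invoke yields information about the predicate ``$M_\xi=N_\xi$'' \emph{inside} $B_\gamma$, not about where collapsed ordinals land relative to $C$ in $V$; and the unexplained appeal to ``nice'' cannot repair this. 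So the step you flag as ``the hard part'' is a genuine gap in your argument --- but it is an artificial one, created by aiming for more than the lemma requires.
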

\begin{proof} Let $\vec{N}$ be a filtration of $H_{\kappa^{+}}$ with $N_{\kappa}=H_{\kappa}$ and $N_{\kappa^{+}} = H_{\kappa^{+}}$. Then there is a club $D \subseteq \kappa^{+} $ where $ N_{\beta} = M_{\beta}$ for every $\beta \in D$. Let $ \beta\in S \cap D $, and let $\vec{\mathfrak{B}}=\langle \mathfrak{B}_{\tau} \mid \tau < |\beta|=\kappa \rangle $ be any chain of elementary submodels of $M_{\beta}=N_{\beta}$.  
\end{proof}

\begin{cor}
	Suppose $V$ is an extender model which is weakly iterable. If there exists $\kappa$ such that $L[E] \models ``\kappa \text{ is a subcompact cardinal}"$, then there is no sequence $\vec{M} = \langle M_{\alpha} \mid \alpha < \kappa^{+}\rangle$ in $L[E]$ such that $\langle M_{\kappa^{+}}, \in,  \vec{M} \rangle \models  \lcc(\kappa^{+},\kappa^{++})$. 
\end{cor}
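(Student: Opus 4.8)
The plan is to reduce to the fact, already essentially established inside the proof of Lemma~\ref{NoSubcompact}, that when $\kappa$ is subcompact the \emph{canonical} filtration $\langle L_{\alpha}[E]\mid\alpha<\kappa^{++}\rangle$ fails to witness $\lcc(\kappa^{+},\kappa^{++})$, and then to transfer this failure to an arbitrary filtration. Concretely, I would work in $L[E]$, where $\gch$ holds (so $H_{\kappa^{+}}=L_{\kappa^{+}}[E]$ and $H_{\kappa^{++}}=L_{\kappa^{++}}[E]$), and suppose toward a contradiction that $\vec M=\langle M_{\alpha}\mid\alpha<\kappa^{++}\rangle$ is a filtration of $H_{\kappa^{++}}$ with $M_{\kappa^{+}}=H_{\kappa^{+}}$ for which $\langle H_{\kappa^{++}},{\in},\vec M\rangle\models\lcc(\kappa^{+},\kappa^{++})$. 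The first observation is the routine fact that any two filtrations of the same set agree on a club (for each filtration, track the least level of the other that contains a given level; common closure points are limit ordinals, hence continuity points of both), applied to $\vec M$ and $\langle L_{\alpha}[E]\mid\alpha<\kappa^{++}\rangle$ as filtrations of $L_{\kappa^{++}}[E]$, and to $\vec M\restriction\kappa^{+}$ and $\langle L_{\gamma}[E]\mid\gamma<\kappa^{+}\rangle$ as filtrations of $L_{\kappa^{+}}[E]=H_{\kappa^{+}}$: thus $D=\{\alpha<\kappa^{++}\mid M_{\alpha}=L_{\alpha}[E]=J^{E}_{\alpha}\}$ and $D'=\{\gamma<\kappa^{+}\mid M_{\gamma}=L_{\gamma}[E]=J^{E}_{\gamma}\}$ each contain a club (in $\kappa^{++}$, resp.\ $\kappa^{+}$).

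Next I would fix $\alpha\in D\cap(\kappa^{+},\kappa^{++})$ and run the $\neg(b)\Rightarrow\neg(a)$ argument of Lemma~\ref{NoSubcompact} at $\alpha$, but against the arbitrary witness; note that argument does not use that $\kappa$ is a successor cardinal. From the Schimmerling--Zeman characterization of $\square_{\kappa}$ and the subcompactness of $\kappa$ one obtains a level $\cM=\mathcal J^{E}_{\beta}$ with $\beta\in[\alpha,\kappa^{++})$, $\rho_{1}(\cM)=\kappa^{+}$, $M_{\alpha},\vec M\restriction\alpha\in\cM$, and a stationary $S\subseteq\kappa^{+}$ of $\gamma$ with $E_{\gamma}\neq\emptyset$ and $h^{\cM}_{1}[\gamma\cup\{p^{\cM}_{1}\}]\cap\kappa^{+}=\gamma$. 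Feeding the trivial $\mathcal F$ into $\lcc(\kappa^{+},\kappa^{++})$ at $\alpha$ yields a $\s$-increasing continuous chain $\langle B_{\gamma}\mid\gamma<\kappa^{+}\rangle$ of elementary substructures of $\langle M_{\alpha},{\in},\vec M\restriction\alpha\rangle$, cofinal in $M_{\alpha}$, with $\clps(B_{\gamma})=M_{\bar\gamma}$ for some $\bar\gamma<\kappa^{+}$. The key step is a chain comparison: since $h^{\cM}_{1}[\kappa^{+}\cup\{p^{\cM}_{1}\}]=\cM$, the sets $h^{\cM}_{1}[\gamma\cup\{p^{\cM}_{1}\}]\cap M_{\alpha}$ (for all large $\gamma$) also form a $\s$-increasing continuous chain of elementary substructures of $\langle M_{\alpha},{\in},\vec M\restriction\alpha\rangle$ cofinal in $M_{\alpha}$ (elementarity because truth in that structure is uniformly $\Delta_{1}$ over $\cM$ from the parameter $\vec M\restriction\alpha$), so the two chains agree on a club $C$. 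For $\gamma\in C$ the collapse of $B_{\gamma}$ computes to $\sigma_{\gamma}(J^{E}_{\alpha})=J^{E^{N_{\gamma}}}_{\bar\alpha_{\gamma}}$, where $\sigma_{\gamma}$ is the collapse of $h^{\cM}_{1}[\gamma\cup\{p^{\cM}_{1}\}]$, $N_{\gamma}$ its range, $\pi_{\gamma}=\sigma_{\gamma}^{-1}$ and $\bar\alpha_{\gamma}=\sigma_{\gamma}(\alpha)>\gamma$; hence $\bar\gamma=\bar\alpha_{\gamma}$ and $M_{\bar\gamma}=J^{E^{N_{\gamma}}}_{\bar\alpha_{\gamma}}$, and moreover $\gamma$ is a cardinal in $N_{\gamma}$ (as $\pi_{\gamma}(\gamma)=\kappa^{+}$ is one in $\cM$), so $E^{N_{\gamma}}_{\gamma}=\emptyset$ by the remark following Lemma~\ref{Collapse}.

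Finally I would locate $\bar\gamma$ inside $D'$ on a stationary subset of $S\cap C$: the map $\gamma\mapsto\bar\alpha_{\gamma}$ is non-decreasing with $\bar\alpha_{\gamma}\ge\gamma$ and is continuous along $C$, so a standard closing-off argument (interleave a sequence in $E\cap C$ with one in $D'$ and take suprema) makes $\{\gamma\in C\mid\bar\alpha_{\gamma}\in D'\}$ a club, whence $S^{*}=S\cap\{\gamma\in C\mid\bar\alpha_{\gamma}\in D'\}$ is stationary. For $\gamma\in S^{*}$ we then have simultaneously $M_{\bar\gamma}=L_{\bar\gamma}[E]=J^{E}_{\bar\gamma}$ (because $\bar\gamma\in D'$) and $M_{\bar\gamma}=J^{E^{N_{\gamma}}}_{\bar\gamma}$ with $E^{N_{\gamma}}_{\gamma}=\emptyset\ne E_{\gamma}$ and $\gamma<\bar\gamma$, which is impossible; so no such $\vec M$ exists. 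I expect the main obstacle to be exactly this last transfer --- arguing that an adversarially chosen $\lcc$-witness is nevertheless pinned, on a club, to the canonical $\Sigma_{1}$-hulls of $\cM$, and then keeping the several clubs ($C$, $D'$, and the range of $\gamma\mapsto\bar\alpha_{\gamma}$) aligned so that $\bar\gamma$ lands in $D'$ stationarily often --- while the remaining ingredients (existence of $\beta$ with $\rho_{1}(\cM)=\kappa^{+}$, the computation of $E^{N_{\gamma}}_{\gamma}$, and the stationary set $S$ from \cite{SquareinK}) are exactly the machinery already used for Theorem~A.
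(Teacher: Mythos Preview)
Your approach is correct and is essentially the paper's own argument carried out directly. The paper routes the corollary through the abstract ``strongly fails to condensate'' lemma immediately preceding it: the $\neg(b)\Rightarrow\neg(a)$ part of Lemma~\ref{NoSubcompact} shows that the canonical filtration $\langle L_\beta[E]\mid\beta<\kappa^{++}\rangle$ strongly fails, and then the lemma (any two filtrations of the same $H$ agree on a club) rules out every other candidate $\vec M$. You unfold both steps in one pass---fixing $\alpha$ in the $\kappa^{++}$-club $D$, aligning the LCC chain with the $\Sigma_1$-hulls of $\cM$ on a club (exactly as in the paper's $\neg(b)\Rightarrow\neg(a)$ argument), and then using the $\kappa^{+}$-level club $D'$ to force $M_{\bar\gamma}=J^E_{\bar\gamma}$. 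The last point is worth highlighting: the paper's lemma proof is only a sketch and never makes explicit the need for club agreement \emph{below} $\kappa^{+}$, whereas your closing-off argument for $\{\gamma\in C\mid \bar\alpha_\gamma\in D'\}$ supplies precisely that missing alignment. One small remark: your use of $M_{\kappa^{+}}=H_{\kappa^{+}}$ is not written into the corollary's hypothesis, but it is assumed in the paper's abstract lemma (and in Theorem~B/Corollary~\ref{NoWitness}), and your argument shows clearly why it is needed.
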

\section{Forcing Local Club Condensation}

In \cite{FHl} it is shown, via class forcing, how to obtain a model of local club condensation for all ordinals above $\omega_{1}$.  Later a simpler forcing was presented in \cite{HWW} which forces condensation on an interval of the form $(\kappa,\kappa^{+})$ where $\kappa$ is a regular cardinal. In this section we show that iterating the forcing from \cite{HWW} and obtain a set forcing $\mathbb{P}$ which forces local club condensation on all ordinals of an interval  $(\kappa,\kappa^{+\alpha})$ modulo ordinals with singular cardinality. We will denote by $\lcc_{\reg}(\kappa,\kappa^{+\alpha})$ (see Definition \ref{DefLCCReg}) the property that local club condensation holds for all ordinals in the interval $(\kappa,\kappa^{+\alpha})$ modulo those which cardinality is a singular cardinal.

 Iterating the forcing from \cite{HWW} gives us a  set forcing which is relatively simpler than the class forcing from \cite{FHl} and $\lcc_{\reg}(\kappa,\kappa^{+\alpha})$ is enough condensation for applications where $\lcc(\kappa,\kappa^{+\alpha})$ was used before, see Section 4 for applications.

\begin{defn} \label{Slow}
		Let $\kappa$ be a regular cardinal and $\alpha$ an ordinal such that $\kappa^{+\alpha}$ is a regular cardinal.  We say that $\Psi(\vec{M},\vec{\theta})$ holds iff $\vec{M}=\langle M_{\gamma} \mid \gamma < \kappa^{+\alpha} \rangle$ is a filtration and  for every regular cardinal $\theta \in (\kappa,\kappa^{+\alpha})$ we have $M_{\theta} = H_{\theta}$.\end{defn}

\begin{defn} \label{DefLCCReg}
	Let $\kappa$ be a regular cardinal and $\alpha$ an ordinal.  We say that $\lcc_{\reg}(\kappa,\kappa^{+\alpha})$ holds iff there is a filtration $\vec{M}=\langle M_{\gamma} \mid \gamma < \kappa^{+\alpha} \rangle$ such that $\langle M_{
	\kappa^{+\alpha}}, \langle M_{\gamma} \mid \gamma < \kappa^{+\alpha} \rangle \rangle \models \lcc(\alpha) $ for all $\alpha \in (\kappa,\kappa^{+\alpha})$ with $|\alpha|\in\reg$.
\end{defn}

The main result of this section is the following:

\begin{thmc}\label{ThmHWW}
Suppose $V$ models $\zfc + \gch$ and $\kappa$ is a regular cardinal and $\beta$ is an ordinal. Then there exists a set-sized forcing $\mathbb{P}$ which is cardinal preserving, cofinality preserving, $\gch$ preserving and forces the existence of a filtration $\vec{M}$ such that $\Psi(\vec{M},\kappa,\kappa^{+\beta})$ holds and  $\langle M_{\kappa^{+\beta}},\in,\vec{M}\rangle \models \lcc_{\reg}(\kappa,\kappa^{+\beta})$.
\end{thmc}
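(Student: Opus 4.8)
The plan is to iterate the Holy--Welch--Wu forcing of \cite{HWW}, one copy at each regular cardinal of the interval $[\kappa,\kappa^{+\beta})$, feeding each copy the filtration produced so far, and then to splice together the generic filtrations obtained at the successive stages into a single filtration of $H_{\kappa^{+\beta}}$. Recall that for a regular cardinal $\theta$ with $2^{<\theta}=\theta$ the HWW poset $\mathbb{Q}_{\theta}$ has partial filtrations of $H_{\theta^{+}}$ extending a fixed filtration of $H_{\theta}$ as conditions, ordered by end-extension, that it is $<\theta$-directed closed, $\theta^{+}$-cc, of size $\theta^{+}$, preserves $\gch$, and that in its generic extension it adds a filtration $\vec{M}^{\theta}=\langle M^{\theta}_{\xi}\mid\xi<\theta^{+}\rangle$ with $M^{\theta}_{\theta}=H_{\theta}$, $M^{\theta}_{\theta^{+}}=H_{\theta^{+}}$ that witnesses $\lcc$ at every ordinal of $(\theta,\theta^{+}]$; condensation at the ordinal $\theta$ itself is inherited from the stage acting at the predecessor cardinal when $\theta$ is a successor cardinal, and, when $\theta$ is inaccessible, follows from the club of cardinals below $\theta$ together with the fact that $M_{\mu}=H_{\mu}$ for each of them, exactly as in the limit-cardinal argument of the previous section. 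I then let $\mathbb{P}=\mathbb{P}_{\beta}$, where $\langle\mathbb{P}_{\gamma},\dot{\mathbb{Q}}_{\gamma}\mid\gamma<\beta\rangle$ is the reverse-Easton-support iteration in which $\dot{\mathbb{Q}}_{\gamma}$ names $\mathbb{Q}_{\kappa^{+\gamma}}$, computed in $V^{\mathbb{P}_{\gamma}}$ and extending the filtration of $H_{\kappa^{+\gamma}}$ built by the earlier stages, when $\kappa^{+\gamma}$ is a regular cardinal, and is trivial when $\kappa^{+\gamma}$ is singular. Since the iterand is trivial exactly at the singular-cardinal stages, the only ordinal intervals that no copy of the HWW forcing touches are the $(\mu,\mu^{+})$ with $\mu$ singular, which is the source of the ``modulo singular cardinality'' clause in the statement.

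\textbf{Preservation.} This is where the bookkeeping lives, but it is of an entirely standard kind: $\mathbb{P}$ is a reverse-Easton iteration of forcings $\dot{\mathbb{Q}}_{\gamma}$ that are $<\kappa^{+\gamma}$-directed closed, $\kappa^{+\gamma+1}$-cc and of size $\kappa^{+\gamma+1}$, so the usual arguments apply. Concretely, using $\gch$ one bounds $|\mathbb{P}_{\gamma}|$ and shows that for a regular $\theta=\kappa^{+\gamma}$ in $[\kappa,\kappa^{+\beta})$ the iteration factors as $\mathbb{P}\cong\mathbb{P}_{\gamma}*\mathbb{P}_{[\gamma,\beta)}$ with the tail $\mathbb{P}_{[\gamma,\beta)}$ forced to be $<\theta$-directed closed (all later iterands are, and the support is Easton) and with $\mathbb{P}_{\gamma}$ of size $<\theta$ or at least having the $\theta$-cc (direct limits are taken at the regular stages below $\gamma$, so antichains stay short). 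Hence every such $\theta$ is preserved; together with $\mathbb{P}$ being $<\kappa$-directed closed (every iterand is, and the support preserves this) and $\kappa^{+\beta+1}$-cc, this yields preservation of all cardinals and cofinalities, and the same closure, chain condition and size bounds show that $\gch$ survives.

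\textbf{The filtration and $\lcc_{\reg}$.} Work in $V^{\mathbb{P}}$. Fix any filtration of $H_{\kappa}$ (available from $\gch$), let $\vec{M}$ agree with it below $\kappa$, let $\vec{M}$ restricted to $(\kappa^{+\gamma},\kappa^{+\gamma+1}]$ be $\vec{M}^{\kappa^{+\gamma}}$ whenever $\kappa^{+\gamma}$ is regular, and on each remaining interval $(\mu,\mu^{+})$ with $\mu$ singular let $\vec{M}$ be some filtration of $H_{\mu^{+}}$ extending what has been defined up to $\mu$. The values at limit-cardinal stages are forced by continuity of a filtration: if $\kappa^{+\gamma}$ is a limit cardinal then $M_{\kappa^{+\gamma}}=\bigcup_{\xi<\kappa^{+\gamma}}M_{\xi}=\bigcup_{\gamma'<\gamma}H_{\kappa^{+\gamma'+1}}=H_{\kappa^{+\gamma}}$, so the blocks agree where they meet (and, since each $\mathbb{Q}_{\kappa^{+\gamma}}$ was fed the filtration of $H_{\kappa^{+\gamma}}$ built below it, the collapse targets produced inside a block are genuine initial segments of $\vec{M}$). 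The defining clauses of a filtration are local to the individual blocks and the splicing happens at cardinal stages where the union clause does the work, so $\vec{M}$ is a filtration of $H_{\kappa^{+\beta}}$ with $M_{\theta}=H_{\theta}$ for every regular $\theta\in[\kappa,\kappa^{+\beta})$, i.e.\ $\Psi(\vec{M},\kappa,\kappa^{+\beta})$ holds. Finally, if $\alpha\in(\kappa,\kappa^{+\beta})$ has $|\alpha|=\theta$ regular then $\alpha\in[\theta,\theta^{+})$, and, writing $\theta=\kappa^{+\gamma}$, the block $\vec{M}^{\kappa^{+\gamma}}$ was added precisely so as to witness $\lcc$ at every such $\alpha$; hence $\langle M_{\kappa^{+\beta}},\in,\vec{M}\rangle\models\lcc_{\reg}(\kappa,\kappa^{+\beta})$.

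\textbf{Main obstacle.} I expect the two genuinely delicate points to be: first, pinning down the support so that every regular cardinal of the interval is preserved while $\gch$ and the size bounds are maintained --- the reverse-Easton pattern of direct and inverse limits has to be chosen with some care, and the cc/closure factorization above has to be arranged uniformly in $\gamma$; and second, checking that the separately generic blocks $\vec{M}^{\kappa^{+\gamma}}$ really do fit together into a bona fide filtration and that the condensation guaranteed inside one block is exactly the condensation demanded by $\lcc_{\reg}$ at the boundary ordinals --- the cardinals $\kappa^{+\gamma}$ themselves --- which is where one has to track whether $\mathbb{Q}_{\theta}$ delivers $\lcc$ open or closed at $\theta$ and use the condensation properties of the filtration already built below $\theta$.
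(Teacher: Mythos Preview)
Your overall strategy---iterate the HWW forcing once at each regular cardinal of $[\kappa,\kappa^{+\beta})$ with reverse-Easton support, and verify the standard closure/chain-condition preservation facts---is exactly what the paper does, and your preservation paragraph is fine. The difference, and the gap, is in how the generic blocks are spliced.

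You describe the HWW poset as having ``partial filtrations of $H_{\theta^{+}}$ extending a fixed filtration of $H_{\theta}$'' as conditions, so that the stage at $\theta$ can be \emph{fed} the filtration built below $\theta$ and automatically extend it. That is not what the HWW forcing is: its conditions are finite-support sequences of pairs $(f_\alpha,c_\alpha)$ coding bijections and clubs, and what it produces is a predicate $A\subseteq\theta^{+}$ with $M_\xi=L_\xi[A]$ and $M_\theta=H_\theta$. There is no input filtration, and the filtration it hands you below $\theta$ is $\langle L_\xi[A]\mid\xi<\theta\rangle$, which need not agree level-by-level with whatever you built earlier. The paper's splicing therefore happens at the level of the predicates: one sets $\mathbb{A}$ to be the union of the $A_\tau\restriction(\kappa^{+\tau},\kappa^{+\tau+1})$, proves (Lemma~\ref{AbstractLimit}) that $L_\iota[A_\tau]=L_\iota[\mathbb{A}]$ for a \emph{club} of $\iota<\kappa^{+\tau}$ because both equal $H_\iota$ there, and then restricts the LCC witness at $\alpha$ to that club so that the collapse targets land in the final filtration $\langle L_\xi[\mathbb{A}]\rangle$ rather than merely in the block filtration $\langle L_\xi[A_\tau]\rangle$.

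Your sentence ``since each $\mathbb{Q}_{\kappa^{+\gamma}}$ was fed the filtration of $H_{\kappa^{+\gamma}}$ built below it, the collapse targets produced inside a block are genuine initial segments of $\vec{M}$'' is precisely the step that does not go through as written: the collapse targets are initial segments of the \emph{block} filtration, and you need the club-restriction argument (or an actual modification of the HWW poset, which you would then have to re-verify) to turn them into initial segments of the spliced $\vec{M}$. You flag this correctly as the second of your ``main obstacles'', but the proposal does not resolve it; the $L[A]$ representation and the club argument are the missing ingredients.
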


We start recalling the forcing from \cite{HWW} which we will iterate to obtain our model. We present the definitions of the forcing from \cite{HWW} for self containment, we will work mainly with abstract properties of the forcing from Theorem \ref{HWW} below.

\begin{convention}\begin{itemize}\item \label{IS} If $ a, b $ are sets of ordinals we write $ a\triangleleft b$ iff $ \sup(a) \cap b = a $. 
		\item If $ \mathbb{P} = \langle \langle \mathbb{P}_{\alpha} \mid \alpha < \beta \rangle, \langle \mathbb{\dot{Q}}_{\alpha} \mid \alpha + 1 < \beta \rangle \rangle $ is a forcing iteration, given $\zeta < \beta$ we denote by $\mathbb{\dot{R}}_{\zeta,\beta}$ a  $\mathbb{P}_{\alpha}$-name such that $ \mathbb{P} = \mathbb{P}_{\zeta}*\mathbb{\dot{R}}_{\zeta,\beta}$ (For the existence of such name $\dot{\mathbb{R}}_{\zeta,\beta}$ see for example \cite[Section 5]{MR823775}).
	\end{itemize}
	\end{convention}

\begin{defn} Let $\kappa$ be a regular cardinal. Suppose $\kappa \leq \alpha < \kappa^{+}$, a \emph{condition at $\alpha$} is a pair $(f_{\alpha},c_{\alpha})$ which is either trivial, i.e. $(f_{\alpha},c_{\alpha}) = (\emptyset,\emptyset)$, or there is $\gamma_{\alpha} < \kappa$ such that
\begin{enumerate}
	\item $c_{\alpha}:\gamma_{\alpha} \rightarrow 2$ is such that $C_{\alpha}((f_{\alpha},c_{\alpha}):=\{ \delta < \gamma_{\alpha} \mid c_{\alpha}(\delta)=1 \} = c_{\alpha}^{-1}\{1\}$.
	\item $f:\max(C_{\alpha}) \rightarrow \alpha$ is an injection and 
	\item $f_{\alpha}[\max(C_{\alpha})] \subseteq \max(C_{\alpha})$
\end{enumerate}
\end{defn}

\begin{defn}
Let $\kappa$ be a regular cardinal and $\alpha \in (\kappa,\kappa^{+})$ we also define a function $A$ with domain $[\kappa,\kappa^{+})$ such that for every $\alpha$, $A(\alpha)$ is a $\mathbb{H}_{\alpha}$-name for either $0$ or $1$. We fix a wellorder $\mathcal{W}$ of $H_{\kappa^{+}}$ of order-type $\kappa^{+}$. Let $\beta \in [\kappa,\kappa^{+})$ and assume that $A \restriction \beta$ and $\mathbb{H}_{\beta}$ have been defined. 

Let $A(\beta)$ be the canonical $\mathbb{H}_{\beta}$-name for either $0$ or $1$ such that for any $\mathbb{H}_{\beta}$-generic $G_{\beta}$, $A(\beta) = 1$ iff $ \beta = {\prec} \gamma, {\prec} \delta, \varepsilon {\succ} {\succ}$  \footnote{${\prec} \gamma, {\prec} \delta, \varepsilon {\succ} {\succ}$ denotes the G\"odel pairing.}, $\dot{x}$ is the $\gamma^{\text{th}}$-name (in the sense of $\mathcal{W}$) $\mathbb{H}_{\delta}$-nice name for a subset of $\kappa$, $\varepsilon < \kappa$ and $\varepsilon \in \dot{x}^{G_{\beta}}$ \footnote{As $\delta < \beta$, we identify $\dot{x}$ with a $\mathbb{H}_{\beta}$-name using the induction hypothesis that $\mathbb{H}_{\delta} \prec \mathbb{H}_{\beta}$.}

Suppose that $ A\restriction \beta $ is defined, we proceed to define $\mathbb{H}_{\beta}$. Suppose $p$ is an $\beta$-sequence such that for each $\alpha < \beta$ we have $p(\alpha) \in \mathbb{H}_{\alpha}$ and suppose that $|\supp(p)| = | \{\tau < \beta \mid p(\tau) \neq 1_{\mathbb{H}_{\tau}}\}| < \kappa$. If $\beta = \alpha +1$ for some $\alpha$, then we require that 
\begin{itemize}
		\item $p\restriction \beta \in \mathbb{H}_{\beta}$ for every $
		\beta < \alpha$ and if $\alpha = \beta + 1$, the following holds:
		\item $p(\beta)=(f_{\beta},c_{\beta})$ is a condition at $\beta$,
		\item if $C_{\beta}\neq \emptyset$, then $p\restriction \beta$ decides $A(\beta)=a_{\beta}$,
		\item for all $\delta \in C_{\beta}\neq \emptyset$,$p(\otp(f_\beta[\delta]))=a_\beta$,
		\item $\gamma^{p} = \supp(p)\cap \kappa = \gamma_\beta = dom(c_\beta)$ for any $\beta \in C\text{-}\supp(p)$, where $C\text{-}\supp(p):=\{\gamma < \beta \mid C_\gamma(p(\gamma)\neq \emptyset \}$
		\item  $\exists \delta^{p}$ such that for all $\beta \in C\text{-}\supp(p), \max(C_\beta)=\delta^{p}$,
		\item $\beta_0 <\beta_1 $ both in $C\text{-}\supp(p)$, $$f_{\beta_{0}}[\delta^{p}] \triangleleft f_{\beta_1}[\delta^{p}]$$
		\footnote{See Convention \ref{IS}} and 
		$$f_{\beta_1}[\delta^{p}] \setminus \beta_0 \neq \emptyset$$
		For $p$ and $q$ in $\mathbb{H}_{\alpha}$ we let $q\leq p $ iff $q \restriction \kappa \leq p \restriction \kappa$ and for every $\beta \in [\lambda,\alpha)
$, $q(\beta) \leq p(\beta)$. 
\end{itemize}
\end{defn}

We will work with a forcing that is equivalent to $\mathbb{H}_{\beta}$  and  is a subset of $ H_{\kappa^{+}}$. 

\begin{defn}\label{Forcingdefn}
	If $\kappa$ is a regular cardinal and $\pi:\mathbb{H}_{\kappa,\kappa^{+}} \rightarrow H_{\kappa^{+}}$ is such that $\pi(p) = p \restriction \supp(p)$, then we define $\mathbb{P}_{\kappa,\kappa^{+}}:= \rng(\pi)$ and given $s,t \in \mathbb{P}_{\kappa,\kappa^{+}}$ we let $s \leq_{\mathbb{P}_{\kappa,\kappa^{+}}} t $ iff $\pi^{-1}(s) \leq_{\mathbb{H}_{\kappa,\kappa^{+}}} \pi^{-1}(t)$.
\end{defn}

Next we describe how we will iterate the forcing from Definition \label{Forcingdefn}.

\begin{defn}
Let $\alpha$ be an ordinal and $\kappa$ a regular cardinal. We define $\mathbb{P}_{\kappa,\kappa^{+\alpha}} $ as the iteration $ \langle \langle \mathbb{P}_{\kappa,\kappa^{+\tau}} \mid \tau \leq \alpha \rangle , \langle \dot{\mathbb{Q}}_{\tau} \mid \tau < \alpha \rangle \rangle  $  as follows: 
\begin{enumerate}
	\item If $\tau = \beta+1$ for some $\beta < \tau$ and $\kappa^{+\beta}$ is a regular cardinal. If there exists $\dot{\mathbb{Q}}_{\beta}\subseteq H_{\kappa^{+\beta+1}}$  such that $\mathbb{P}_{\kappa,\kappa^{+\beta}} \forces \dot{\mathbb{Q}}_{\beta}=\mathbb{P}_{\kappa^{+\beta},\kappa^{+\beta+1}}$ we let $\mathbb{P}_{\kappa,\kappa^{+\beta+1}} = \mathbb{P}_{\kappa,\kappa^{+\beta}}*\dot{\mathbb{Q}}_{\beta}$, otherwise we stop the iteration.
	\item If $\tau = \beta+1$ for some $\beta < \tau$ and $\kappa^{+\beta}$ is a singular cardinal, we let $\dot{\mathbb{Q}}_{\beta} = \check{1}$.
	\item If $\beta$ is a limit ordinal and $\kappa^{+\beta}$ is a regular cardinal, then $\mathbb{P}_{\beta}$ is the direct limit of  $\langle \mathbb{P}_{\kappa,\kappa^{+\beta}} , \dot{\mathbb{Q}}_{\beta} 
	\mid \theta  < \tau \rangle $.
	\item If $\tau$ is a limit ordinal and $\kappa^{+\beta}$ is singular, then $\mathbb{P}_{\kappa,\kappa^{+\tau}}$ is the inverse limit of  $\langle \mathbb{P}_{\kappa,\kappa^{+\theta}} , \dot{\mathbb{Q}}_{\theta} 
	\mid \theta  < \tau \rangle $.
	\end{enumerate}   
	\end{defn}

\begin{remark}
	Given an ordinal $\alpha$ and a regular cardinal $\kappa$ the forcing $\mathbb{P}_{\kappa,\kappa^{+\alpha}}$ is obtained  forcing $\mathbb{P}_{\kappa^{\beta},\kappa^{+\beta+1}} $ for each successor ordinal $\beta < \alpha$  such that $\kappa^{+\beta}$ is a regular cardinal. If $\beta$ is a limit ordinal but not an inaccessible cardinal, then we take inverse limits, if $\kappa$ is an inaccessible cardinal we take direct limits. 
\end{remark}

\begin{remark} \label{SeqBijections}
	Let $\kappa$ be a regular cardinal and let $G$ be $\mathbb{P}_{\kappa,\kappa^{+}}$-generic. Consider $$f_{\alpha}:= \bigcup \{f \mid \exists p ( \alpha \in \dom(p)\wedge p \in G \wedge p(\alpha)= (c,f) ) \}$$ By a standard density argument we have that $f_{\alpha}$ is a bijection from $\kappa$ onto $\alpha$.	It also holds that $\alpha \in A $ iff $\{\gamma < \kappa \mid \otp(f_{\alpha}[\gamma]) \in A\}$ contains a club and $\alpha \not\in A$ iff $\{\gamma < \kappa \mid \otp(f_{\alpha}[\gamma]) \not\in A\}$ contains a club.
\end{remark}

\begin{thm}[\cite{HWW}] \label{HWW}  Suppose $\gch$ holds and $\kappa$ is a cardinal. Then $\mathbb{P}_{\kappa}$ is a $<\kappa$- directed closed, $\kappa^{+}$-cc forcing such that $|\mathbb{P}|=\kappa$ and for all $G$, $\mathbb{P}$-generic, the following holds in $V[G]$:
	\begin{itemize} 
		\item There is $\vec{M} =\langle M_{\alpha} \mid \alpha \leq \kappa^{+} \rangle $ which witnesses $\lcc(\kappa,\kappa^{+})$, 
		\item $M_{\kappa}= H_{\kappa}$,
		\item  $M_{\kappa^{+}}=H_{\kappa^{+}}$,
		\item There exists $A \subseteq \kappa^{+}  $ such that for all $ \beta < \kappa^{+}$ we have $( M_{\beta}=L_{\beta}[A])$.
	\end{itemize}
\end{thm}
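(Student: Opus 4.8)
The plan is to dispatch the three combinatorial properties of $\mathbb{P}_{\kappa,\kappa^{+}}$ first, then to isolate the coding predicate $A$ and the filtration it generates, and finally to verify $\lcc(\kappa,\kappa^{+})$, where essentially all of the work lies.

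\textbf{Forcing properties.} For $<\kappa$-directed closure, given a directed family $\langle p_{i}\mid i<\lambda\rangle$ with $\lambda<\kappa$ I would take the coordinatewise union. On each coordinate $\beta$ the partial injections $f_{\beta}^{p_{i}}$ cohere, since directedness makes the $p_{i}$ pairwise compatible and hence forces agreement on overlaps, and the union of the club-parts is a function on $\sup_{i}\gamma_{p_{i}}<\kappa$ by the regularity of $\kappa$. The only delicate point is that the union again satisfies the structural clauses of a condition — a common $\gamma^{p},\delta^{p}$ and the $\triangleleft$-increasing pattern of the images $f_{\beta}[\delta^{p}]$ — and this is exactly what pairwise compatibility of the $p_{i}$ delivers, so the union is the desired lower bound. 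For the $\kappa^{+}$-cc I would run a $\Delta$-system argument: from $\kappa^{+}$ conditions, each of support size $<\kappa$, the regularity of $\kappa^{+}$ together with $\gch$ lets me thin to $\kappa^{+}$ conditions whose supports form a $\Delta$-system with root $r$, which agree on $r$ and share the finitary data $\gamma^{p}$, $\delta^{p}$ and the order type of the $\triangleleft$-chain; any two of these are compatible, since off the root the supports are disjoint and the club-parts amalgamate. Finally $\mathbb{P}_{\kappa,\kappa^{+}}\subseteq H_{\kappa^{+}}$, so under $\gch$ it has size at most $\kappa^{+}$, and the chain condition together with the closure yields cardinal, cofinality and $\gch$ preservation.

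\textbf{The predicate and the filtration.} Let $A$ be the realization $\{\beta\mid A(\beta)^{G}=1\}$ of the name-function built into the forcing, and put $M_{\beta}:=L_{\beta}[A]$ for $\beta<\kappa^{+}$, with $M_{\kappa^{+}}:=L_{\kappa^{+}}[A]$. The filtration axioms are the standard features of the $L[A]$-hierarchy under $\gch$ (transitivity; $M_{\beta}\cap\ord=\beta$; $|M_{\beta}|\le\max\{\aleph_{0},|\beta|\}$ by condensation in $L[A]$; $\s$-increasing and continuous). The decisive structural fact is $M_{\kappa^{+}}=H_{\kappa^{+}}$: the inclusion $L_{\kappa^{+}}[A]\subseteq H_{\kappa^{+}}$ is trivial, and conversely every $x\in\mathcal P(\kappa)^{V[G]}$ has, by $\kappa^{+}$-cc and $\gch$, a nice name coded at some $\beta<\kappa^{+}$ through the G\"odel-pairing scheme defining $A(\beta)$; combining this with the generic bijections $f_{\alpha}\colon\kappa\to\alpha$ of Remark~\ref{SeqBijections} one reads $x$ back inside $L_{\kappa^{+}}[A]$, so $\mathcal P(\kappa)^{V[G]}\subseteq L_{\kappa^{+}}[A]$ and $H_{\kappa^{+}}^{V[G]}=L_{\kappa^{+}}[A]$. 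As the forcing is $<\kappa$-closed it adds no bounded subset of $\kappa$, so $H_{\kappa}$ is unchanged, and $M_{\kappa}=L_{\kappa}[A]=H_{\kappa}$ by the analogous coding below $\kappa$.

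\textbf{Verifying $\lcc(\kappa,\kappa^{+})$.} Fix $\alpha\in(\kappa,\kappa^{+})$, so $|\alpha|=\kappa$, and a sequence $\mathcal F=\langle(F_{n},k_{n})\mid n\in\omega\rangle$ of predicates on $M_{\alpha}=L_{\alpha}[A]$. Using the generic bijection $f_{\alpha}\colon\kappa\to\alpha$ I would let $\mathcal B_{\gamma}$, for $\gamma<\kappa$, be the Skolem hull of $\gamma\cup f_{\alpha}[\gamma]$ inside $\langle M_{\alpha},\in,\vec M\restriction\alpha,(F_{n})_{n\in\omega}\rangle$. Then $\gamma\s B_{\gamma}$, $|B_{\gamma}|<\kappa$, and $\langle B_{\gamma}\mid\gamma<\kappa\rangle$ is $\s$-increasing, continuous and converges to $M_{\alpha}$, since $\bigcup_{\gamma}f_{\alpha}[\gamma]=\alpha$ and everything in $L_{\alpha}[A]$ is definable from ordinals below $\alpha$ and $A$. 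The core is clause~(d): that for club-many $\gamma$ the transitive collapse of $\mathcal B_{\gamma}$ is $\langle M_{\bar\gamma},\in,\vec M\restriction\bar\gamma\rangle$ for some $\bar\gamma<\kappa^{+}$. Writing $\pi_{\gamma}$ for the inverse collapse, the uniform definability $M_{\delta}=L_{\delta}[A]$ reduces this to the \emph{local correctness} of $A$ under $\pi_{\gamma}$, namely $\xi\in A\iff\pi_{\gamma}(\xi)\in A$ for all $\xi<\bar\gamma$; granting it, $\clps(\mathcal B_{\gamma})=L_{\bar\gamma}[A]=M_{\bar\gamma}$ and the collapse of $\vec M\restriction(B_{\gamma}\cap\ord)$ is $\vec M\restriction\bar\gamma$.

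\textbf{The main obstacle.} The hard step is this local correctness, and it is exactly what the club-coding of the conditions was engineered to provide. By Remark~\ref{SeqBijections}, $\eta\in A$ iff $\{\gamma<\kappa\mid\otp(f_{\eta}[\gamma])\in A\}$ contains a club, and the clause tying $p(\otp(f_{\beta}[\delta]))$ to the decided value $a_{\beta}$, together with the $\triangleleft$-increasing images $f_{\beta}[\delta^{p}]$ sharing a common $\gamma^{p},\delta^{p}$, forces that for club-many $\gamma$ the finitely many membership facts ``$\eta\in A$'' witnessed inside $\mathcal B_{\gamma}$ are decided consistently by the initial segment $A\cap\bar\gamma$. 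Establishing this club/density fact — that the coding localizes $A$ correctly along the generic bijections — is the whole content of the theorem; once it is in hand, the remaining clauses are bookkeeping with the $L[A]$-hierarchy.
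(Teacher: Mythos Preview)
The paper does not prove this statement: Theorem~\ref{HWW} is quoted from \cite{HWW} and used as a black box for the iteration in Section~3, so there is no proof in the paper to compare your proposal against.

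Your sketch does follow the natural architecture of the Holy--Welch--Wu argument, and you correctly isolate the real content --- the local correctness of $A$ under the collapse maps, supplied by the club-coding built into the conditions. Two minor remarks on the sketch itself. First, the theorem as restated here asserts $|\mathbb{P}|=\kappa$, whereas your argument only gives $|\mathbb{P}|\le\kappa^{+}$ from $\mathbb{P}\subseteq H_{\kappa^{+}}$; the sharper bound (or rather $|\mathbb{P}|=\kappa^{+}$, which is what is actually used later via the $\kappa^{+}$-cc) needs a moment's care, and the ``$=\kappa$'' in the paper's restatement is likely a slip. Second, your chain $\langle\mathcal B_{\gamma}\mid\gamma<\kappa\rangle$ is not literally continuous as written, since Skolem hulls need not commute with unions at limit $\gamma$; one should either close off under the Skolem functions along a club or take unions at limits and verify elementarity there, which is routine but should be said.
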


We will need the following facts: 

\begin{fact} \label{Baum} \cite[Theorem 2.7]{MR823775} Let $\mathbb{P}_{\alpha}$ be the inverse limit of $\langle \mathbb{P}_\beta, \dot{\mathbb{Q}}_{\beta} \mid \beta < \alpha \rangle$. Suppose that $\kappa$ is a regular cardinal and for all $\beta < \alpha$, $$\Vdash_{\mathbb{P}_{\beta}} \dot{\mathbb{Q}}_{\beta} \text{ is } \kappa\text{-directed closed}.$$
	Suppose also that all limits are inverse or direct and that if $\beta \leq \alpha$, $\beta $ is a limit ordinal and $\cf(\beta)<\kappa$, then $\mathbb{P}_{\beta}$ is the inverse limit of $\langle \mathbb{P}_{\gamma} \mid \gamma < \beta \rangle$. Then $\mathbb{P}_{\alpha}$ is $\kappa$-directed closed.  
\end{fact}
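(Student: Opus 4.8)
This is Baumgartner's iteration theorem, and the plan is to follow his argument: prove by induction on $\alpha$ that $\mathbb{P}_{\alpha}$ is $\kappa$-directed closed, i.e.\ that every directed $D \s \mathbb{P}_{\alpha}$ with $|D| < \kappa$ admits a lower bound. Fix such a $D$, which we may assume is non-empty. The case $\alpha = 0$ is trivial, and at each limit stage we will use the hypothesis to know whether $\mathbb{P}_{\beta}$ is the inverse or the direct limit.

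For $\alpha = \beta + 1$ I would use $\mathbb{P}_{\beta+1} = \mathbb{P}_{\beta} * \dot{\mathbb{Q}}_{\beta}$ and reduce to the two-step lemma. First project $D$ to its first coordinates, obtaining a directed subset $D_{0} \s \mathbb{P}_{\beta}$ of size $< \kappa$, which by the induction hypothesis has a lower bound $r$. Since $r \le p$ for every $p \in D_{0}$, below $r$ the set of $\dot{\mathbb{Q}}_{\beta}$-coordinates appearing in $D$ is forced to be exactly $\{\dot q^{G} : (p,\dot q) \in D\}$, and the directedness of $D$ forces this to be a directed subset of $\dot{\mathbb{Q}}_{\beta}$ of size $< \kappa$; by the hypothesis that $\dot{\mathbb{Q}}_{\beta}$ is forced $\kappa$-directed closed, $r$ forces it to have a lower bound, and pulling back a name $\dot q^{*}$ for such a lower bound gives $(r,\dot q^{*})$ as a lower bound of $D$.

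For $\alpha$ a limit I would construct a lower bound $q$ by a single recursion on $\beta \le \alpha$, maintaining two invariants: (i) $q\restriction\beta \in \mathbb{P}_{\beta}$ is a lower bound of $\{p\restriction\beta : p \in D\}$, and (ii) $\supp(q\restriction\beta) \s \bigcup_{p\in D}\supp(p\restriction\beta)$. Successor stages of the recursion are precisely the two-step argument above, now with $r := q\restriction\beta$ fixed in advance; invariant (ii) survives because the recursion puts a non-trivial entry at coordinate $\beta$ only when some $p \in D$ already has $\beta$ in its support. At a limit stage $\lambda \le \alpha$ I would set $q\restriction\lambda := \bigcup_{\beta<\lambda} q\restriction\beta$; this automatically lies in $\mathbb{P}_{\lambda}$ when $\mathbb{P}_{\lambda}$ is the inverse limit, and when $\mathbb{P}_{\lambda}$ is the direct limit the hypothesis that limits of cofinality $< \kappa$ are taken inversely forces $\cf(\lambda) \ge \kappa$, so that $|D| < \kappa \le \cf(\lambda)$ and the bounded-support property of elements of the direct limit make $\bigcup_{p\in D}\supp(p\restriction\lambda)$ bounded below $\lambda$; by (ii) so is $\supp(q\restriction\lambda)$, whence $q\restriction\lambda \in \mathbb{P}_{\lambda}$ there too. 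Finally $q := q\restriction\alpha$ is a lower bound of $D$, since $q\restriction\beta \le p\restriction\beta$ for all $\beta < \alpha$ and $p \in D$ by (i).

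The step I expect to be the real obstacle is this last direct-limit stage: one must make sure the recursively assembled $q\restriction\lambda$ genuinely belongs to $\mathbb{P}_{\lambda}$, which is exactly what forces one to carry the support-control invariant (ii) through the whole construction and to exploit the cardinal arithmetic $|D| < \kappa \le \cf(\lambda)$ supplied by the ``inverse limits at small cofinality'' hypothesis. The two-step lemma, the choice of the name $\dot q^{*}$, and the verification that the partial lower bounds cohere are all routine.
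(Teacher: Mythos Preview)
The paper does not prove this statement at all: it is recorded as a \emph{Fact} with a citation to Baumgartner~\cite[Theorem~2.7]{MR823775} and is used as a black box. Your proposal is the standard Baumgartner argument and is correct as sketched; the support-control invariant~(ii) together with the cofinality dichotomy at limit stages is exactly the point of the theorem, and you have identified it correctly.
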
 

\begin{fact} \label{cc} Suppose $\cf(\kappa)>\omega$. If $\mathbb{P}$ is a $\kappa$-cc forcing and $\mathbb{P} \forces \dot{\mathbb{Q}}$ is $\kappa$-cc, then $\mathbb{P}*\mathbb{Q}$ is $\kappa$-cc. 
\end{fact}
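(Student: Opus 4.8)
The plan is to run the standard two-step iteration argument, reducing the $\kappa$-cc of $\mathbb{P}*\dot{\mathbb{Q}}$ to the $\kappa$-cc of the two factors together with a reflection argument in which the hypothesis $\cf(\kappa)>\omega$ is decisive. First I would record the compatibility criterion in the two-step iteration: conditions $(p_{0},\dot q_{0})$ and $(p_{1},\dot q_{1})$ are compatible in $\mathbb{P}*\dot{\mathbb{Q}}$ if and only if there is $p\leq_{\mathbb{P}} p_{0},p_{1}$ with $p\forces \dot q_{0}$ and $\dot q_{1}$ are compatible. Then I would assume toward a contradiction that $\langle (p_{\xi},\dot q_{\xi})\mid \xi<\kappa\rangle$ enumerates an antichain of size $\kappa$ in $\mathbb{P}*\dot{\mathbb{Q}}$, and aim to refute this.

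Next I would pass to a generic extension. Fix $G$ that is $\mathbb{P}$-generic over $V$ and set $I=\{\xi<\kappa\mid p_{\xi}\in G\}$ in $V[G]$. For distinct $\xi,\eta\in I$ the conditions $p_{\xi},p_{\eta}$ both lie in the filter $G$, so they have a common extension in $G$; were $\dot q_{\xi}^{G}$ and $\dot q_{\eta}^{G}$ compatible in $\mathbb{Q}=\dot{\mathbb{Q}}^{G}$, the compatibility criterion would produce a common extension of $(p_{\xi},\dot q_{\xi})$ and $(p_{\eta},\dot q_{\eta})$, contradicting that they lie in an antichain. Hence $\{\dot q_{\xi}^{G}\mid \xi\in I\}$ is an antichain of $\mathbb{Q}$ with pairwise distinct members, and since $\mathbb{P}\forces$``$\dot{\mathbb{Q}}$ is $\kappa$-cc'' we obtain $|I|<\kappa$ in $V[G]$. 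As $G$ was arbitrary, letting $\dot I=\{(\check\xi,p_{\xi})\mid\xi<\kappa\}$ be the canonical name for this set, I will have shown $\forces_{\mathbb{P}}|\dot I|<\check\kappa$.

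The heart of the matter is to reflect this back to $V$, and this is exactly where the $\kappa$-cc of $\mathbb{P}$ and the hypothesis $\cf(\kappa)>\omega$ enter. Since $\mathbb{P}$ is $\kappa$-cc it preserves cofinalities $\geq\kappa$, so $\kappa$ is not collapsed and $\cf(\kappa)>\omega$ survives; consequently a subset of $\kappa$ of size $<\kappa$ added by $\mathbb{P}$ is forced to be bounded, i.e. $\forces_{\mathbb{P}}\sup\dot I<\check\kappa$. Now $\sup\dot I$ is a $\mathbb{P}$-name for an ordinal below $\kappa$, and by the $\kappa$-cc of $\mathbb{P}$ an antichain deciding its value has size $<\kappa$, so the decided values form a subset of $\kappa$ of size $<\kappa$, which is bounded by some $\gamma<\kappa$. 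Thus $\forces_{\mathbb{P}}\dot I\subseteq\check\gamma$, which is absurd: any generic filter containing the condition $p_{\gamma}$ witnesses $\check\gamma\in\dot I$, and $p_{\gamma}$ is a nontrivial condition. I expect this last reflection to be the main obstacle, and the place to be most careful: one must verify that $\cf(\kappa)>\omega$ together with $\kappa$-cc genuinely forces $\dot I$ to be bounded below $\kappa$ — the underlying point being that such a forcing cannot add a cofinal sequence into $\kappa$ of length $\omega$ when $\cf(\kappa)>\omega$. When $\kappa$ is regular (so $\cf(\kappa)=\kappa>\omega$) the preservation and boundedness are immediate and the argument closes cleanly; the delicate checking of exactly how $\cf(\kappa)>\omega$ is consumed is what I would isolate as the crux.
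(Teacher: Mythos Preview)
The paper records this as a Fact without proof; it is folklore, and in the paper it is only ever invoked with $\kappa$ a regular cardinal (namely $\kappa^{+\beta}$ in the successor step of the iteration). Your argument is the standard one and is correct in that regular case: the two steps that consume regularity---passing from $\forces|\dot I|<\kappa$ to $\forces\sup\dot I<\kappa$, and then bounding the set of possible values of $\sup\dot I$ below $\kappa$---are exactly where you locate the crux, and you are right that for singular $\kappa$ with $\cf(\kappa)>\omega$ these inferences fail as written (indeed $\kappa$-cc need not even preserve $\cf(\kappa)>\omega$: $\mathrm{Col}(\omega,\omega_1)$ is $\aleph_{\omega_1}$-cc but forces $\cf(\aleph_{\omega_1})=\omega$). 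Since the paper neither proves nor uses the singular case, your treatment is adequate for its purposes. One cosmetic point: the bound $\gamma$ on the decided values of $\sup\dot I$ yields only $\forces\dot I\subseteq\widecheck{\gamma+1}$, so the contradiction should be drawn from $p_{\gamma+1}$ rather than $p_\gamma$.
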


\begin{fact} \label{Htheta} Let $\mathbb{P}$ be a partial order and $\theta$ a regular cardinal. Suppose $\mathbb{P}$ preserves cardinals. If $G$ is $\mathbb{P}$-generic, then $H_{\theta}^{V} = H_{\theta}^{V[G]}$.
\end{fact}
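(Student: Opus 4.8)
The plan is to unpack $H_\theta=\{x\mid |\trcl(x)|<\theta\}$ and prove the two inclusions separately. The inclusion $H_\theta^V\subseteq H_\theta^{V[G]}$ is immediate: if $x\in V$ with $|\trcl(x)|^V<\theta$, then since $\mathbb{P}$ preserves cardinals we have $|\trcl(x)|^{V[G]}=|\trcl(x)|^V<\theta$ (a ground-model set keeps its cardinality, because every $V$-cardinal stays a cardinal, and $\theta$ remains a cardinal), so $x\in H_\theta^{V[G]}$. Hence the content of the statement is the reverse inclusion $H_\theta^{V[G]}\subseteq H_\theta^V$, i.e.\ that forcing with $\mathbb{P}$ produces no genuinely new set of hereditary cardinality below $\theta$.

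First I would reduce this reverse inclusion to a statement about bounded subsets of $\theta$. Fix $x\in H_\theta^{V[G]}$ and put $\lambda=|\trcl(\{x\})|^{V[G]}<\theta$, which we may take infinite. Working in $V[G]$, choose a bijection $f\colon \trcl(\{x\})\to\lambda$ and let $E=\{(f(u),f(v))\mid u\in v\}\subseteq\lambda\times\lambda$, so that $f^{-1}$ exhibits $(\lambda,E)$ as a well-founded extensional relation whose Mostowski collapse is $(\trcl(\{x\}),{\in})$, with $f(x)$ collapsing to $x$. Using a pairing function $\lambda\times\lambda\to\lambda$, encode $E$ as a single set $a\subseteq\lambda$ with $a\in V[G]$. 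If we can show $a\in V$, then $E$, its collapse, and the image of $f(x)$ can all be computed inside $V$; by absoluteness of the collapse of a well-founded extensional relation between $V$ and $V[G]$, that image is again $x$, so $x\in V$ and (by cardinal preservation, as above) $x\in H_\theta^V$. Thus everything reduces to showing $a\in V$ for each such $a$.

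Consequently the whole statement comes down to the single point that $\mathbb{P}$ adds no new subset of any $\lambda<\theta$, equivalently $\mathcal{P}(\lambda)^{V[G]}=\mathcal{P}(\lambda)^V$ for all $\lambda<\theta$. This is the step I expect to be the real obstacle, and it is exactly where care is needed: cardinal preservation by itself does not deliver it, since, for instance, Cohen forcing $\mathbb{P}$ preserves all cardinals yet adds a new subset of $\omega$. What is actually required is that $\mathbb{P}$ be $<\theta$-distributive, i.e.\ add no new bounded subsets of $\theta$; granting this, every $a\subseteq\lambda$ in $V[G]$ already lies in $V$ and the reduction above closes the proof. In the applications of this section that distributivity is supplied not by cardinal preservation but by closure: by Fact~\ref{Baum} the iterations under consideration are directed closed below the relevant $\theta$, hence $<\theta$-distributive, which is what makes the conclusion $H_\theta^V=H_\theta^{V[G]}$ go through.
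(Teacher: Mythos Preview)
Your approach coincides with the paper's: encode an arbitrary $w\in H_\theta^{V[G]}$ as a well-founded extensional relation on some $\delta<\theta$, argue that this relation already lies in $V$, and recover $w$ in $V$ via the absoluteness of the Mostowski collapse. You are also right that the statement, as phrased, is too weak---cardinal preservation alone does not suffice (your Cohen example is decisive). The paper's own proof confirms your diagnosis: at the key step it invokes ``since $\mathbb{P}$ is ${<}\theta$-closed'' to conclude $(\delta,R)\in V$, a hypothesis absent from the statement but present in every application (via Fact~\ref{Baum}). So your proof and the paper's are the same argument, and your observation that distributivity (supplied in context by closedness) is the genuine hypothesis is exactly what the paper's proof tacitly relies on.
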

\begin{proof} Let $G$ be $\mathbb{P}$-generic and $w \in H_{\theta}^{V[G]}$. Let $\delta = | \text{trcl}\{w\}|$ and suppose $f: \delta \rightarrow \text{trcl}\{w\}$ is a bijection. Let $ R \subseteq \delta $ such that $ (x,y) \in R $ if and only if $f(x) \in f(y)$. Then the Mostowski collapse of $(\delta,R)$ is equal to $w$. Since $\mathbb{P}$ is $< \theta$-closed, it follows that $(\delta,R) \in V$ and hence $ w \in V$. Since $\mathbb{P}$ preserves cardinals it follows that $|w|=\delta$ and $w \in H_{\theta}$.
	\end{proof}

\begin{fact}
	\label{ccHmu} Let $\mu$ be a cardinal. Suppose $\mathbb{P}$ is a forcing that is $\mu^{+}$-cc and $\mathbb{P}\subseteq H_{\mu^{+}}$. If $G$ is $\mathbb{P}$-generic, then $H_{\mu^{+}}^{V[G]}=H_{\mu^{+}}[G]$. 
\end{fact}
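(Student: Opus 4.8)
The plan is to prove the two inclusions $H_{\mu^{+}}^{V}[G]\subseteq H_{\mu^{+}}^{V[G]}$ and $H_{\mu^{+}}^{V[G]}\subseteq H_{\mu^{+}}^{V}[G]$ separately, using only that $\mathbb{P}$ has the $\mu^{+}$-cc and that $\mathbb{P}\subseteq H_{\mu^{+}}$. The second hypothesis does the real work: it makes each condition of $\mathbb{P}$ hereditarily of size $\le\mu$, and combined with the $\mu^{+}$-cc it forces every maximal antichain of $\mathbb{P}$ to have size $\le\mu$, so that such antichains and the nice names assembled from them all lie in $H_{\mu^{+}}^{V}$. I will also use the elementary point that if $w\in H_{\mu^{+}}^{V[G]}$ then every element of $\trcl(\{w\})$ has rank below $\mu^{+}$ (an ordinal $\ge\mu^{+}$ has transitive closure of size $\ge\mu^{+}$, and $\mu^{+}$ is regular), so in particular $\rho_{w}:=\operatorname{rank}(w)<\mu^{+}$.

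For $H_{\mu^{+}}^{V}[G]\subseteq H_{\mu^{+}}^{V[G]}$: given a $\mathbb{P}$-name $\dot{x}\in H_{\mu^{+}}^{V}$, an induction on name-rank shows $\trcl(\{\dot{x}^{G}\})\subseteq\{\dot{\sigma}^{G}\mid\dot{\sigma}\text{ a }\mathbb{P}\text{-name in }\trcl(\{\dot{x}\})\}$, a set of size $\le|\trcl(\{\dot{x}\})|\le\mu$; hence $\dot{x}^{G}\in H_{\mu^{+}}^{V[G]}$.

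For $H_{\mu^{+}}^{V[G]}\subseteq H_{\mu^{+}}^{V}[G]$, fix $w\in H_{\mu^{+}}^{V[G]}$. Working in $V[G]$, choose a bijection $b\colon\kappa_{0}\to\trcl(\{w\})$ with $\kappa_{0}=|\trcl(\{w\})|\le\mu$, and set $R=\{(\xi,\eta)\in\kappa_{0}\times\kappa_{0}\mid b(\xi)\in b(\eta)\}$ and $\ell(\eta)=\operatorname{rank}(b(\eta))$. Then $R$ is well-founded and extensional with transitive collapse $b$, so $b(\eta)=\{b(\xi)\mid(\xi,\eta)\in R\}$ for all $\eta$; moreover $w=b(\eta_{w})$ for $\eta_{w}=b^{-1}(w)$ with $\ell(\eta_{w})=\rho_{w}$; the function $\ell$ maps $\kappa_{0}$ into $\rho_{w}+1$ and $\rho_{w}<\mu^{+}$; and crucially $(\xi,\eta)\in R$ implies $\ell(\xi)<\ell(\eta)$. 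Now fix nice names $\dot{R},\dot{\ell}\in H_{\mu^{+}}^{V}$ for $R$ (a subset of $\mu\times\mu$, hence codeable as a subset of $\mu$) and for $\ell$ (a function from $\mu$ into $\mu^{+}$ whose range has size $\le\mu$); these exist in $H_{\mu^{+}}^{V}$ by the $\mu^{+}$-cc together with $\mathbb{P}\subseteq H_{\mu^{+}}$. Back in $V$, fix for all $\xi,\eta<\mu$ and $\delta<\mu^{+}$ a maximal antichain $B_{\xi,\eta,\delta}$ of conditions deciding, positively, that $\check{\xi}$ is an $\dot{R}$-predecessor of $\check{\eta}$ with $\dot{\ell}$-value $\check{\delta}$, and define $\mathbb{P}$-names $\dot{y}_{\eta}^{(\gamma)}$ for $\eta<\mu$ by recursion on the ordinal $\gamma$ via
\[
\dot{y}_{\eta}^{(\gamma)}=\bigl\{(\dot{y}_{\xi}^{(\delta)},p)\mid\delta<\gamma,\ \xi<\mu,\ p\in B_{\xi,\eta,\delta}\bigr\}.
\]
A routine induction on $\gamma<\mu^{+}$, using $|\gamma|\le\mu$ and the $\mu^{+}$-cc, shows each $\dot{y}_{\eta}^{(\gamma)}\in H_{\mu^{+}}^{V}$. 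Finally, the standard nice-name computation gives $(\dot{y}_{\eta}^{(\gamma)})^{G}=\{(\dot{y}_{\xi}^{(\ell(\xi))})^{G}\mid(\xi,\eta)\in R,\ \ell(\xi)<\gamma\}$; since $(\xi,\eta)\in R$ already forces $\ell(\xi)<\ell(\eta)$, an induction on $\ell(\eta)$ (in $V[G]$) yields $(\dot{y}_{\eta}^{(\ell(\eta))})^{G}=\{b(\xi)\mid(\xi,\eta)\in R\}=b(\eta)$ for every $\eta$. Hence $\dot{x}:=\dot{y}_{\eta_{w}}^{(\rho_{w})}$ lies in $H_{\mu^{+}}^{V}$ and $\dot{x}^{G}=b(\eta_{w})=w$, so $w\in H_{\mu^{+}}^{V}[G]$.

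The step I expect to be the main obstacle is precisely this construction of $\dot{x}$. The naive idea of naming the collapse $b$ by a recursion $\dot{y}_{\eta}=\{(\dot{y}_{\xi},p)\mid p\text{ forces }(\check{\xi},\check{\eta})\in\dot{R}\}$ driven by the relation alone fails: $R$ need not satisfy that a node precedes its $R$-predecessors in the ordinal order on $\mu$ (for instance, if $\operatorname{rank}(w)$ is a limit of order type exceeding $\mu$ there is provably no such enumeration of $\trcl(\{w\})$), and the ``forced dependency'' relation $\{(\xi,\eta)\mid\exists p\ (p\Vdash(\check{\xi},\check{\eta})\in\dot{R})\}$ computed in $V$ may well have cycles even though the true relation $R$ does not. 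The remedy — carrying along the rank labelling $\ell$, which is nameable in $H_{\mu^{+}}^{V}$ exactly because $w\in H_{\mu^{+}}^{V[G]}$ forces $\operatorname{rank}(w)<\mu^{+}$, and recursing on the ordinal value $\gamma$ rather than on $R$ — makes the recursion trivially well-founded, after which everything reduces to bookkeeping with the $\mu^{+}$-chain condition.
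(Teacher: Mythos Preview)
Your proof is correct and shares the paper's core idea: code $w\in H_{\mu^{+}}^{V[G]}$ by a well-founded extensional relation $R$ on an ordinal ${<}\mu^{+}$, then use the $\mu^{+}$-cc together with $\mathbb{P}\subseteq H_{\mu^{+}}$ to produce a nice name in $H_{\mu^{+}}^{V}$.

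Where you differ is in the reconstruction step. The paper simply asserts that it suffices to treat bounded subsets of $\mu^{+}$ (since every $w$ is the Mostowski collapse of some $(\gamma,R)$) and then builds a nice name for such a subset by choosing maximal antichains. It leaves implicit the passage from a name $\dot{R}\in H_{\mu^{+}}^{V}$ to a name $\dot{w}\in H_{\mu^{+}}^{V}$ for the collapse itself. You fill this in explicitly: you additionally name the rank function $\ell$ and run a recursion on the ordinal value $\gamma<\mu^{+}$ to build $\dot{y}_{\eta}^{(\gamma)}$, which sidesteps the problem (which you correctly diagnose) that the relation $\{(\xi,\eta)\mid\exists p\,(p\Vdash(\check{\xi},\check{\eta})\in\dot{R})\}$ computed in $V$ need not be well-founded. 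This makes your argument more self-contained; the paper's version is shorter but relies on the reader supplying exactly this kind of detail.

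One minor remark: in your recursion you let $\xi$ range over $\mu$ rather than $\kappa_{0}$, and $\delta$ over all ordinals below $\gamma$. This is harmless provided your nice names $\dot{R},\dot{\ell}$ are arranged so that $B_{\xi,\eta,\delta}=\emptyset$ whenever $\xi\ge\kappa_{0}$ or $\delta\notin\operatorname{rng}(\ell)$; since $\kappa_{0}$ and the (bounded) range of $\ell$ are themselves determined only in $V[G]$, it is cleanest to first fix in $V$ a single bound $\rho<\mu^{+}$ with $1_{\mathbb{P}}\Vdash\dot{\ell}\subseteq\check{\mu}\times\check{\rho}$ (available by the $\mu^{+}$-cc) and then run the recursion for $\gamma\le\rho$ and $\xi<\mu$.
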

\begin{proof} We proceed by $\in$ induction on the elements of $H_{\mu^{+}}^{V[G]}$. Notice that it suffices to prove the result for subsets of $\kappa^{+}$, since every $ x \in H_{\kappa^{+}}^{V[G]}$ is of the form $\trcl(\gamma,R)$ for some $\gamma < \kappa^{+}$ and $R \subseteq \gamma\times \gamma$.  Let $x = \sigma[G] \in H_{\mu^{+}}^{V[G]}$ such that $x \subseteq \kappa^{+}$. As $\mathbb{P}$ is $\mu^{+}$-cc, there is an ordinal $\gamma$ such that $ x\subseteq \gamma $ and $1_{\mathbb{P}}\forces \sigma \subseteq \gamma$.   Let $\theta = \bigcup\{ A_{\tau}\times\{\tau\} \mid \exists p \in \mathbb{P} \exists \xi \in \gamma (p \forces \check{\xi} = \pi ) \wedge \tau   \in H_{\mu^{+}}   \}$ and each $A_{\tau} \subseteq \mathbb{P}$  such that:
	\begin{enumerate}
		\item $A_{\tau}$ is an antichain,
		\item $q \in A_{\tau}$ implies $ q \forces \tau \in \sigma $,
		\item $q \in A_{\tau}$ is maximal with respect to the above two properties.
	\end{enumerate}

It follows that $\theta \in H_{\mu^{+}}$ and $\theta[G]=\sigma[G]$. 
	\end{proof}

\begin{remark}
	If $\mathbb{P}$ and $\mu$  satisfy the hypothesis from Fact \ref{ccHmu} and $\sigma$ is a $\mathbb{P}$-name such that $1_{\mathbb{P}} \forces \sigma \subseteq H_{\mu^{+}}$, then using Fact \ref{ccHmu} we can find a $\mathbb{P}$-name $\pi \subseteq H_{\mu^{+}}$ such that $1_{\mathbb{P}} \forces \sigma = \pi$.
\end{remark}

\begin{remark}\label{Inacc}
Given  a regular cardinal $\kappa$ and a limit ordinal $\beta$, we have $\cf(\kappa^{+\beta}) = \cf(\beta)$. 
Therefore if $\beta < \kappa^{+\beta}$ it follows that $\kappa^{+\beta}$ is singular. On the other hand if $\kappa^{+\beta} = \cf(\kappa^{+\beta}) = \cf(\beta) \leq \beta$, then $\beta$ is a weakly inaccessible cardinal, i.e. a cardinal that is a limit cardinal and regular.  Thus $\kappa^{+\beta}$ is regular iff $\beta$ is a weakly inaccessible cardinal.
\end{remark}

\begin{convention} Let $\mathbb{P}$ be a set forcing and $\varphi(\sigma_0,\cdots,\sigma_n)$  a formula in the forcing language. We write $\mathbb{P} \forces \varphi(\sigma_0,\cdots, \sigma_n)$ if for all $p \in \mathbb{P}$ we have $p \forces \varphi(\sigma_0,\cdots,\sigma_n)$.
	\end{convention} 

\begin{lemma} Suppose $\gch $ holds. Let $\kappa$ be a regular cardinal and $\beta$ an ordinal. Then $\mathbb{P}_{\kappa,\kappa^{+\beta}}$ preserves  $\gch$, cardinals and cofinalities and if $\kappa^{+\beta}$ is a regular cardinal then there exists $\dot{\mathbb{Q}}_{\beta} \subseteq H_{\kappa^{+\beta+1}}$ a $\mathbb{P}_{\kappa,\kappa^{+\beta}}$-name such that $\mathbb{P}_{\kappa,\kappa^{+\beta}}\forces \mathbb{P}_{\kappa^{+\beta},\kappa^{+\beta+1}}=\dot{\mathbb{Q}}_{\beta}$.
	\end{lemma}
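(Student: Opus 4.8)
The plan is to prove the lemma by induction on $\beta$, tracking three things simultaneously: that $\mathbb{P}_{\kappa,\kappa^{+\beta}}$ preserves $\gch$, cardinals and cofinalities; that it has a sufficiently nice chain-condition/closure profile; and that, when $\kappa^{+\beta}$ is regular, the tail forcing $\mathbb{P}_{\kappa^{+\beta},\kappa^{+\beta+1}}$ (which is the Holy--Welch--Wu forcing of Theorem~\ref{HWW} computed in $V^{\mathbb{P}_{\kappa,\kappa^{+\beta}}}$) is coded by a name $\dot{\mathbb{Q}}_{\beta}\subseteq H_{\kappa^{+\beta+1}}$. The base case $\beta=0$ is trivial. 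For the successor step, assuming the statement at $\beta$ with $\kappa^{+\beta}$ regular, I would first invoke the induction hypothesis that $\mathbb{P}_{\kappa,\kappa^{+\beta}}$ is $<\kappa$-directed closed and $\kappa^{+\beta}$-cc together with $\gch$-preservation to conclude that $\mathbb{P}_{\kappa,\kappa^{+\beta}}\subseteq H_{\kappa^{+\beta}}$ (up to isomorphism) and that $H_{\kappa^{+\beta+1}}^{V} = H_{\kappa^{+\beta+1}}^{V^{\mathbb{P}_{\kappa,\kappa^{+\beta}}}}$ via Fact~\ref{Htheta} applied to the regular cardinal $\kappa^{+\beta+1}$ — here I need that cardinals and cofinalities, hence regularity of $\kappa^{+\beta+1}$, are preserved, which is part of the induction hypothesis.

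Next I would extract the name $\dot{\mathbb{Q}}_{\beta}$. In $V^{\mathbb{P}_{\kappa,\kappa^{+\beta}}}$, since $\gch$ holds and $\kappa^{+\beta}$ is regular, Theorem~\ref{HWW} (with the cardinal there taken to be $\kappa^{+\beta}$) produces the forcing $\mathbb{P}_{\kappa^{+\beta},\kappa^{+\beta+1}}$, which by Definition~\ref{Forcingdefn} is literally a subset of $H_{(\kappa^{+\beta})^{+}}^{V^{\mathbb{P}_{\kappa,\kappa^{+\beta}}}} = H_{\kappa^{+\beta+1}}^{V^{\mathbb{P}_{\kappa,\kappa^{+\beta}}}} = H_{\kappa^{+\beta+1}}^{V}$. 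Because $\mathbb{P}_{\kappa,\kappa^{+\beta}}$ is $\kappa^{+\beta}$-cc and (as just noted) can be taken inside $H_{\kappa^{+\beta+1}}$, Fact~\ref{ccHmu} with $\mu=\kappa^{+\beta}$ gives $H_{\kappa^{+\beta+1}}^{V^{\mathbb{P}_{\kappa,\kappa^{+\beta}}}} = H_{\kappa^{+\beta+1}}^{V}[G]$, and the Remark following Fact~\ref{ccHmu} lets me replace any name for this subset of $H_{\kappa^{+\beta+1}}$ by a name $\dot{\mathbb{Q}}_{\beta}\subseteq H_{\kappa^{+\beta+1}}$ with $\mathbb{P}_{\kappa,\kappa^{+\beta}}\forces \dot{\mathbb{Q}}_{\beta}=\mathbb{P}_{\kappa^{+\beta},\kappa^{+\beta+1}}$. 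Then $\mathbb{P}_{\kappa,\kappa^{+\beta+1}} = \mathbb{P}_{\kappa,\kappa^{+\beta}}*\dot{\mathbb{Q}}_{\beta}$ is a two-step iteration; its $<\kappa$-directed closure follows from the induction hypothesis and the fact (from Theorem~\ref{HWW}, since $\kappa \le \kappa^{+\beta}$) that $\dot{\mathbb{Q}}_{\beta}$ is forced to be $<\kappa^{+\beta}$-directed closed, hence $<\kappa$-directed closed; the $\kappa^{+\beta+1}$-cc follows from the $\kappa^{+\beta+1}$-cc of $\mathbb{P}_{\kappa,\kappa^{+\beta}}$ (it is even $\kappa^{+\beta}$-cc) and Fact~\ref{cc} together with $\dot{\mathbb{Q}}_{\beta}$ being $\kappa^{+\beta+1}$-cc. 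Preservation of $\gch$, cardinals and cofinalities for the product is then a routine consequence of the closure below $\kappa$, the chain condition, and a nice-name counting argument bounding $|\mathbb{P}_{\kappa,\kappa^{+\beta+1}}|$ by $\kappa^{+\beta+1}$ under $\gch$. The successor case with $\kappa^{+\beta}$ singular is immediate since $\dot{\mathbb{Q}}_{\beta}=\check 1$ is trivial.

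For the limit step, write $\beta$ a limit ordinal; by Remark~\ref{Inacc}, $\kappa^{+\beta}$ is regular iff $\beta$ is weakly inaccessible, and otherwise $\cf(\kappa^{+\beta})=\cf(\beta)$ is small relative to the relevant parameters. The directed-closure below $\kappa$ is handled by Fact~\ref{Baum}: all limits in the iteration are direct or inverse, and at limits of cofinality $<\kappa$ the definition takes inverse limits (since such $\kappa^{+\theta}$ are singular hence $<\kappa^{+\beta}$, and in particular of cofinality $<\kappa$ when $\cf(\beta)<\kappa$), so the hypotheses of Fact~\ref{Baum} with the cardinal $\kappa$ are met. For the chain condition and cardinal preservation at the limit one uses a standard reflection/nice-name argument: under $\gch$, each $\mathbb{P}_{\kappa,\kappa^{+\theta}}$ has size $\le\kappa^{+\theta}$, the supports have size $<\kappa$, and a condition in $\mathbb{P}_{\kappa,\kappa^{+\beta}}$ is determined by its support (of size $<\kappa$) together with its coordinates, giving the appropriate chain condition by the standard $\Delta$-system computation; $\gch$ in the extension then follows by counting nice names. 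I expect the main obstacle to be the bookkeeping at singular limits — verifying that inverse limits there do not destroy cardinals or blow up the chain condition, and checking that the "stop the iteration" clause of the definition never triggers, i.e.\ that the required name $\dot{\mathbb{Q}}_{\beta}$ always exists at regular stages — which is exactly what the successor step above establishes, so the induction closes.
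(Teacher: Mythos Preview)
Your overall inductive scheme matches the paper's, and the use of Fact~\ref{ccHmu} to extract $\dot{\mathbb{Q}}_\beta\subseteq H_{\kappa^{+\beta+1}}$ and of Fact~\ref{Baum} for $<\kappa$-closure are exactly what the paper does. However, two points are genuinely off.

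First, a local error: your appeal to Fact~\ref{Htheta} to get $H_{\kappa^{+\beta+1}}^{V}=H_{\kappa^{+\beta+1}}^{V^{\mathbb{P}_{\kappa,\kappa^{+\beta}}}}$ is false. That fact (as its proof shows) needs $<\theta$-closure, and $\mathbb{P}_{\kappa,\kappa^{+\beta}}$ is only $<\kappa$-closed; it certainly adds subsets of $\kappa$, so the two $H_{\kappa^{+\beta+1}}$'s differ. The step is also unnecessary: Fact~\ref{ccHmu} alone gives $H_{\kappa^{+\beta+1}}^{V[G]}=H_{\kappa^{+\beta+1}}^{V}[G]$, which is all you need for the name.

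Second, and more substantively, your chain-condition bookkeeping breaks at singular stages. You carry ``$\mathbb{P}_{\kappa,\kappa^{+\beta}}$ is $\kappa^{+\beta}$-cc'' as an induction hypothesis and claim ``each $\mathbb{P}_{\kappa,\kappa^{+\theta}}$ has size $\le\kappa^{+\theta}$'' with ``supports of size $<\kappa$'' to run a $\Delta$-system at limits. But at a singular $\kappa^{+\theta}$ the iteration takes the \emph{inverse} limit, so conditions have full support in $\theta$, not support $<\kappa$, and a $\Delta$-system argument does not apply; under $\gch$ the inverse limit has size $(\kappa^{+\theta})^{\cf\theta}=\kappa^{+\theta+1}$, not $\kappa^{+\theta}$. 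Consequently, at the next stage (a successor of a singular) you only get $\kappa^{+\theta+2}$-cc, not $\kappa^{+\theta+1}$-cc. This is precisely why the paper splits the induction hypothesis into the four cases $(1)_\beta$--$(4)_\beta$: $(2)_\beta$ gives $\kappa^{+\beta}$-cc only when $\kappa^{+\beta}$ is regular and not a successor of a singular, $(3)_\beta$ records the weaker $\kappa^{+\beta+1}$-cc at successors of singulars, and $(4)_\beta$ records the size bound $|\mathbb{P}_{\kappa,\kappa^{+\beta}}|\le\kappa^{+\beta+1}$ at singular $\kappa^{+\beta}$. Cardinal and cofinality preservation at singular limits is then obtained not by a chain-condition argument but by factoring $\mathbb{P}_{\kappa,\kappa^{+\beta}}=\mathbb{P}_{\kappa,\kappa^{+\tau}}*\dot{\mathbb{R}}_{\kappa^{+\tau},\kappa^{+\beta}}$ and using closure of the tail. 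Your sketch names this as ``the main obstacle'' but the proposed fix (small supports, $\Delta$-system) does not work; you need the finer case split the paper carries.
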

\begin{proof}  We prove the  lemma by induction. Besides the statement of the lemma we carry  the following additional induction hypothesis:
	\begin{enumerate}[$(1)_\beta$]
		\item $\mathbb{P}_{\kappa,\kappa^{+\beta}}$ preserves cardinals and cofinalities,
		\item If $\kappa^{+\beta}$ is a regular cardinal and not the successor of a singular cardinal, then $\mathbb{P}_{\kappa,\kappa^{+\beta}}$ is $\kappa^{+\beta}\text{-cc}$ and there exists $\dot{\mathbb{Q}}_{\beta} \subseteq H_{\kappa^{+\beta+1}}$ such that $\mathbb{P}_{\kappa,\kappa^{+\beta}} \forces (\mathbb{P}_{\kappa^{+\beta},\kappa^{+\beta+1}} = \dot{\mathbb{Q}}_{\beta}) $.
		\item If $\kappa^{+\beta}$ is a successor of a singular cardinal, then $\mathbb{P}_{\kappa,\kappa^{+\beta}}$ is $\kappa^{+\beta+1}$-cc,
		\item If $\kappa^{+\beta}$ is a singular cardinal, then $|\mathbb{P}_{\beta}| \leq \kappa^{+\beta+1}$
	\end{enumerate} 
	  For $\beta = 1 $ the lemma follows from Theorem \ref{HWW}.  Suppose that $(1)_{\theta}$ to $(4)_{\theta}$ and that the lemma holds for all $\theta < \beta$. We will verify that $(1)_{\beta}$ to $(4)_{\beta}$ and that the lemma holds for $\beta$. 
	  
	  	$\blacktriangleright $ Suppose $\beta = \theta + 1 $ for some ordinal $\theta$ such that $\kappa^{+\theta}$ is regular.  From $(2)_{\theta}$ in our induction hypothesis,  $\mathbb{P}_{\kappa,\kappa^{+\theta}}$ is $\kappa^{+\theta}$-cc, hence by Fact \ref{ccHmu}, for any  $G$, $\mathbb{P}_{\kappa,\kappa^{+\theta}}$-generic, we have  $H_{\kappa^{+\theta+1}}[G]= H_{\kappa^{+\theta+1}}^{V[G]}$. Thus there exists $\dot{\mathbb{Q}}_{\beta} \subseteq H_{\kappa^{\theta+1}}$ such that $\mathbb{P}_{\kappa,\kappa^{+\beta}} \forces ``\mathbb{P}_{\kappa^{\beta},\kappa^{+\beta+1}} = \dot{\mathbb{Q}}_{\beta}"$. 
	  	
	  	From our induction hypothesis  	$\mathbb{P}_{\kappa,\kappa^{+\theta}}$ preserves $\gch$, cardinals and cofinalities and from $(2)_{\theta}$ we have that $\mathbb{P}_{\kappa,\kappa^{+\theta}}$ is $\kappa^{+\theta}$-cc.  We also have that $$\mathbb{P}_{\kappa,\kappa^{+\theta}} \forces ``\mathbb{P}_{\kappa^{+\theta},\kappa^{+\theta+1}} \text{ preserves } \gch, \text{ cardinals, cofinalities and it is } \kappa^{+\theta+1}\text{-cc}".$$
 Altogether	implies that $\mathbb{P}_{\kappa,\kappa^{+\beta}}$, which is  $\mathbb{P}_{\kappa,\kappa^{+\theta}}*\dot{\mathbb{Q}}_{\beta}$, preserves $\gch$, cardinals and cofinalities. By Fact \ref{cc} we have that $\mathbb{P}_{\kappa,\kappa^{+\beta}}$ is $\kappa^{+\beta}$-cc.

	$\blacktriangleright $ Suppose $\kappa^{+\theta}$ is singular and $\beta = \theta + 1 $. By our induction hypothesis $(4)_{\theta}$ we have  $|\mathbb{P}_{\kappa^{+\theta}}| \leq \kappa^{+\theta+1}$. As $ \dot{\mathbb{Q}}_{\theta}$ is the trivial forcing, it follows that $|\mathbb{P}_{\kappa,\kappa^{+\beta}}|\leq \kappa^{+\beta}$ and $\mathbb{P}_{\kappa,\kappa^{+\beta}}$ is $\kappa^{+\beta +1}$-cc. Therefore if $G$ is $\mathbb{P}_{\kappa,\kappa^{+\beta+1}}$-generic, $H_{\kappa^{+\beta+1}}[G] = H_{\kappa^{+\beta+1}}^{V[G]}$, hence we can find $\dot{\mathbb{Q}}_{\beta+1} \subseteq H_{\kappa^{+\beta+1}}$ as sought and $\mathbb{P}_{\kappa,\kappa^{+\beta}}$ preserves $\gch$, cardinals and cofinalities. 
	
	$\blacktriangleright $ Suppose that $\beta$ is a limit ordinal and $\kappa^{+\beta}$ is a singular cardinal. 
		
	From our induction hypothesis we have that for every $\zeta < \beta $ the forcing $\mathbb{P}_{\kappa,\kappa^{+\zeta}}$ preserves cardinals and by Theorem \ref{Baum} $$\mathbb{P}_{\kappa,\kappa^{+\zeta}} \forces ``\dot{\mathbb{R}}_{\kappa^{+\zeta},\kappa^{+\beta}} \text{ is } {<}\kappa^{+\tau}\text{-closed}."$$ Therefore all cardinals below $\kappa^{+\beta}$ are preserved. Thus $\kappa^{+\beta}$ remains a cardinal in $V[G_{\beta}]$ and $\cf(\kappa^{+\beta})^{V[G_{\beta}]}=(\cf(\kappa^{+\beta}))^{V[G_\tau]}= (\cf(\kappa^{+\beta}))^V$.
		
		 As $ \cf(\kappa^{+\beta})^{+} < \kappa^{+\beta}$, we can fix $\tau < \beta$  such that $\kappa^{+\tau} \geq cf(\kappa^{+\beta})$. From our induction hypothesis we have that $\mathbb{P}_{\kappa,\kappa^{+\tau+1}}$ preserves cardinals. From Theorem \ref{Baum} we have that $\mathbb{P}_{\kappa,\kappa^{+\tau +1}}$ forces $\dot{\mathbb{R}_{\kappa^{+\tau+1},\kappa^{+\beta}}}$ to be ${<}\cf(\kappa^{+\beta})^{+}$-closed. Therefore $((\kappa^{+\beta})^{\cf(\kappa^{+\beta}}))^{V[G_{\tau}]} = ((\kappa^{+\beta})^{\cf(\kappa^{+\beta}}))^{V[G_{\beta}]} $ and $(\kappa^{+\beta+1})^{V[G_\theta]} = (\kappa^{+\beta+1})^{V[G_{\beta}]}$. We have verified above that
		 \begin{itemize}
		 	\item  $\mathbb{P}_{\kappa,\kappa^{+\beta}}\forces (\kappa^{+\beta})^{V} \text{is a cardinal}$
		 	\item $\mathbb{P}_{\kappa,\kappa^{+\beta}} \forces (\cf(\kappa^{+\beta}))=\cf^{V}(\kappa^{+\beta})$	
		 	\item 	 		 $\mathbb{P}_{\kappa,\kappa^{+\beta}}\forces 2^{\kappa^{+\beta}}=\kappa^{+\beta+1}$
		 \end{itemize}
	     It is also clear from the above that $\mathbb{P}_{\kappa,\kappa^{+\beta}}$ preserves $\gch$, cardinals and cofinalities below $\kappa^{+\beta}$.

		   From our induction hypothesis $(2)_{\theta}$ it follows that for each $\theta < \beta$ we have $|\dot{\mathbb{Q}}_{\theta}|\leq \kappa^{+\theta+1}$, then using $\gch$ it follows that $|\mathbb{P}_{\kappa,\kappa^{+\beta}}| \leq \kappa^{+\beta+1}$ and hence $\mathbb{P}_{\kappa,\kappa^{+\beta}}$ is $\kappa^{+\beta+2}$-cc.
		 	
		 	Thus $\mathbb{P}_{\kappa,\kappa^{+\beta}}$ preserves $\gch$, cardinals and cofinalities above $\kappa^{+\beta+2}$.

		 $\blacktriangleright$ If $\kappa^{+\beta}$ is a limit cardinal and regular, then $\mathbb{P}_{\kappa,\kappa^{+\beta}}$ is the direct limit of $\langle \mathbb{P}_{\kappa,\kappa^{+\tau}},\dot{\mathbb{Q}}_{\tau+1} \mid \tau<\kappa^{+\beta} \rangle$. From our induction hypothesis $(2)_{\theta}$ for $\theta < \beta$, we have $|\dot{\mathbb{Q}}_{\tau}| \leq \kappa^{+\tau+1}$. Therefore $|\mathbb{P}_{\kappa^{+\beta}}| \leq \kappa^{+\beta}$ and hence $\mathbb{P}_{\kappa^{+\beta}}$ is $\kappa^{+\beta+1}$-cc  and preserves $\gch$, cardinals and cofinalties at cardinals greater or equal than $\kappa^{+\beta}$. From our induction hypothesis we have that cofinalities cardinals and $\gch$ are preserved  below $\kappa^{+\beta}$. Hence $\mathbb{P}_{\kappa,\kappa^{+\beta}}$ preserves cofinalities, cardinals and $\gch$.  
	\end{proof}

Lemma \ref{AbstractLimit}, below,  will be used in a context where $W_{\tau}= V[G_{\tau}]$ and $G_{\tau}$ is $\mathbb{P}_{\kappa,\kappa^{+\tau}}$-generic.

\begin{lemma}\label{AbstractLimit} Let $\langle W_{\tau} \mid \tau \leq \beta \rangle$ be a sequence of transitive proper classes that model $\zfc$ and suppose that $\tau_0 < \tau_1 <\beta $ implies $W_{\tau_0} \subseteq W_{\tau_{1}}$ and $\card^{W_{\tau_{0}}}=\card^{W_{\tau_{1}}}$. Suppose further that the following hold:
	\begin{enumerate} 
		\item for each $\tau < \beta$ the folowing holds in $W_{\tau}$: there exists $A_{\tau} \subseteq \kappa^{+\tau}  $ such that $ \vec{M}^{\tau} = \langle L[A_{\tau}]_{\zeta} \mid \zeta < \kappa^{+\tau + 1} \rangle$  witnesses $\lcc_{\reg}(\kappa,\kappa^{+\tau})$,
		\item For $\tau_0 < \tau_1 < \beta $ we have $H_{\tau_{0}^{+}}^{W_{\tau_0}}=L[A_{\tau_0}]_{\tau_{0}^{+}} = L[A_{\tau_1}]_{\tau_{1}^{+}}=H_{\tau_{1}^{+}}^{W_{\tau_1}}$, 
		\item for every $\tau < \beta$ we have $H_{\tau}^{W_{\tau}} = H_{\tau}^{W_{\beta}}$ and 
		\item   $$\mathbb{A}:=\bigcup\{ A_\tau 	\restriction (\kappa^{+\tau},\kappa^{+\tau+1}) \mid  \reg(\kappa^{+\tau}) \wedge \tau < \beta \} \cup \bigcup \{A_\tau 	\restriction (\kappa^{+\tau},\kappa^{+\tau+2}) \mid \text{Sing}(\kappa^{+\tau})\}$$ is an element of 
$ W_{\beta}$.
\end{enumerate}
 Then $\vec{M} = \langle L_{\zeta}[\mathbb{A}]
\mid \zeta < \kappa^{+\beta} \rangle $ witnesses $\lcc_{\reg}(\kappa,
\kappa^{+\beta})$ in $W_{\beta}$.	\end{lemma}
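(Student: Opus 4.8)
The plan is to reduce each instance of $\lcc$ that we must check in $W_\beta$ to the corresponding instance already available at a suitable earlier stage $W_\tau$, and then to observe that the witnessing object transfers verbatim because it is small and because every notion involved is absolute between the transitive models of $\zfc$ in play. Throughout we use that $\beta$ is a limit ordinal, which is the case in which the lemma is applied; in particular $\kappa^{+\beta}=\sup_{\tau<\beta}\kappa^{+\tau}$.

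\textbf{Step 1: $\mathbb{A}$ coheres with the stage predicates.} First I would verify that $\vec M=\langle L_\zeta[\mathbb{A}]\mid\zeta<\kappa^{+\beta}\rangle$ is a filtration in $W_\beta$ and that for every $\tau<\beta$ the restriction $\langle L_\zeta[\mathbb{A}]\mid\zeta<\kappa^{+\tau+1}\rangle$ equals $\vec M^\tau$ (with the analogous statement reaching $\kappa^{+\tau+2}$ at the singular stages). The clauses in the definition of a filtration --- transitivity, $L_\zeta[\mathbb{A}]\cap\ord=\zeta$, $|L_\zeta[\mathbb{A}]|\le\max\{\aleph_0,|\zeta|\}$, being $\s$-increasing and continuous at limits --- are the usual properties of a relativised constructible hierarchy; since they are local in $\zeta$ and $\kappa^{+\beta}=\sup_{\tau<\beta}\kappa^{+\tau}$, it is enough to know this coincidence together with the fact, supplied by hypothesis~(1), that each $\vec M^\tau$ is a filtration. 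The coincidence follows from hypothesis~(2): the predicates $A_\tau$ cohere, so $A_{\tau_0}$ and $A_{\tau_1}$ generate the same constructible hierarchy below $\kappa^{+\tau_0+1}$ whenever $\tau_0<\tau_1<\beta$; hence, by the explicit definition of $\mathbb{A}$, the part of $\mathbb{A}$ lying below $\kappa^{+\tau+1}$ is level by level interconstructible with $A_\tau$, the two-cardinal blocks at singular $\kappa^{+\tau'}$ being exactly what matches the trivial forcing used there in the iteration. Hypothesis~(4) guarantees that $\mathbb{A}$, hence $\vec M$, is an element of $W_\beta$.

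\textbf{Step 2: locating the stage.} Fix $\alpha\in(\kappa,\kappa^{+\beta})$ with $|\alpha|$ a regular cardinal and a sequence $\mathcal F=\langle (F_n,k_n)\mid n\in\omega\rangle$ with $F_n\s (M_\alpha)^{k_n}$; we must produce a chain witnessing $\lcc$ at $\alpha$ with respect to $\mathcal F$ for $\vec M$ in $W_\beta$. Using $\kappa^{+\beta}=\sup_{\tau<\beta}\kappa^{+\tau}$ and that $\beta$ is a limit ordinal, choose a successor ordinal $\tau<\beta$ (so $\kappa^{+\tau}$ is regular) with $\alpha<\kappa^{+\tau}$; then $\alpha\in(\kappa,\kappa^{+\tau})$ and $|\alpha|$ is regular, so by hypothesis~(1) $\vec M^\tau$ witnesses $\lcc$ at $\alpha$ in $W_\tau$. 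By Step~1 we have $M_\alpha=L_\alpha[\mathbb{A}]=L_\alpha[A_\tau]$ and $\vec M\restriction\alpha=\vec M^\tau\restriction\alpha$. Moreover $M_\alpha$ and $\mathcal F$ have transitive closure of cardinality at most $\max\{\aleph_0,|\alpha|\}<\kappa^{+\tau}$, so they lie in $H_{\kappa^{+\tau}}$, which by hypothesis~(3) is the same in $W_\tau$ and in $W_\beta$; in particular $M_\alpha,\mathcal F\in W_\tau$ and $\mathcal F$ is a legitimate input for $\lcc$ at $\alpha$ as computed in $W_\tau$.

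\textbf{Step 3: transfer.} Applying hypothesis~(1) inside $W_\tau$ we obtain a chain $\vec{\mathfrak B}=\langle\mathcal B_\gamma\mid\gamma<|\alpha|\rangle$ witnessing $\lcc$ at $\alpha$ with respect to $\mathcal F$ for $\vec M^\tau$; as before $\vec{\mathfrak B}$ has transitive closure of size below $\kappa^{+\tau}$, so $\vec{\mathfrak B}\in H_{\kappa^{+\tau}}^{W_\tau}=H_{\kappa^{+\tau}}^{W_\beta}\s W_\beta$. It remains to check that this same $\vec{\mathfrak B}$ witnesses $\lcc$ at $\alpha$ with respect to $\mathcal F$ for $\vec M$ in $W_\beta$, i.e.\ that Clauses~(a)--(e) of Definition~\ref{LCCupto}(3) hold there. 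The form of each $\mathcal B_\gamma$, the elementarity $\mathcal B_\gamma\prec\langle M_\alpha,{\in},\vec M\restriction\alpha,(F_n)_{n\in\omega}\rangle$, and the statement that $\langle B_\gamma\mid\gamma<|\alpha|\rangle$ is $\s$-increasing, continuous, and converging to $M_\alpha$ are absolute for set-sized structures between transitive models of $\zfc$, and refer to the same structures in $W_\tau$ and $W_\beta$ because $\vec M\restriction\alpha=\vec M^\tau\restriction\alpha$ and $M_\alpha$ is the same set. The requirements $\gamma\s B_\gamma$ and $|B_\gamma|<|\alpha|$ survive because $W_\tau$ and $W_\beta$ have the same cardinals. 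For the collapse clause, the Mostowski collapse of a set structure is absolute, and its target $\langle M_{\bar\gamma},{\in},\langle M_\delta\mid\delta\in\bar\gamma\rangle\rangle$ has $\bar\gamma<\kappa$ and is therefore unchanged, since $\vec M\restriction\kappa=\vec M^\tau\restriction\kappa$. As $\alpha$ and $\mathcal F$ were arbitrary, $\vec M$ witnesses $\lcc_{\reg}(\kappa,\kappa^{+\beta})$ in $W_\beta$.

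\textbf{Main obstacle.} The delicate point is Step~1: making precise how $\mathbb{A}$, assembled from blocks of the $A_\tau$ together with the special two-cardinal blocks at singular stages, reproduces each $\vec M^\tau$ exactly, and in particular reconciling the index bookkeeping with hypothesis~(2) and with the trivial forcing used at singular stages of the iteration. Once the coherence of $\mathbb{A}$ is in hand, Steps~2 and~3 are a routine argument combining smallness of the objects with absoluteness, whose only essential extra ingredient is that $\beta$ is a limit ordinal so that $\kappa^{+\beta}=\sup_{\tau<\beta}\kappa^{+\tau}$.
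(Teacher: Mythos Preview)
Your overall strategy---locate a stage $\tau$ where $\alpha$ already lives, pull a witnessing chain from $W_\tau$, and transfer it to $W_\beta$---is exactly the paper's. The gap is precisely at the point you flag as the main obstacle, and your Steps~2--3 proceed as if it were resolved when it is not.

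In Step~1 you assert that $\vec M\restriction\kappa^{+\tau+1}=\vec M^{\tau}$, i.e.\ $L_\zeta[\mathbb A]=L_\zeta[A_\tau]$ for \emph{every} $\zeta<\kappa^{+\tau+1}$. Hypothesis~(2) does not give this: it says the hierarchies agree at a single cardinal level (the relevant $H$'s coincide as sets), not that the predicates $\mathbb A$ and $A_\tau$ agree pointwise below $\kappa^{+\tau}$. Indeed $\mathbb A\cap\kappa^{+\tau}$ is assembled from blocks of $A_{\tau'}$ for $\tau'<\tau$, and nothing in the hypotheses forces these to equal the corresponding segments of $A_\tau$. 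What one does get from $L_{\kappa^{+\tau}}[A_\tau]=H_{\kappa^{+\tau}}=L_{\kappa^{+\tau}}[\mathbb A]$ is agreement on a \emph{club} $C\subseteq\kappa^{+\tau}$: for $\iota\in C$ one has $L_\iota[A_\tau]=L_\iota[\mathbb A]$.

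This matters for clause~(d) in Step~3. The collapse of $B_\gamma$ is some $M^{\tau}_{\bar\gamma}$, and you need $M^{\tau}_{\bar\gamma}=M_{\bar\gamma}$; that is only guaranteed when $\bar\gamma$ lies in the club. (Your claim that ``$\bar\gamma<\kappa$'' conflates the $\kappa$ of Definition~\ref{LCCupto} with the $\kappa$ of the lemma; in fact $\bar\gamma<|\alpha|=\kappa^{+\tau}$, so $\vec M\restriction\kappa=\vec M^{\tau}\restriction\kappa$ is not what is needed.) The paper's fix is exactly what is missing from your argument: take the witness $\vec{\mathfrak B}_0$ produced in $W_\tau$ and set $\vec{\mathfrak B}=\vec{\mathfrak B}_0\restriction C$. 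Restricted and re-indexed, this is still an increasing continuous chain of elementary substructures converging to $M_\alpha$, and now the collapse targets land where the two filtrations coincide. Your verbatim transfer does not work without this restriction.
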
 
\begin{proof} 	We work in $W_{\beta}$.  Let $\alpha \in (\kappa,\kappa^{+\beta})$ such that $|\alpha|$ is a regular cardinal. Let $ \mathbb{S}= \langle L_{\alpha}[\mathbb{A}], \in, (\mathcal{F}_{n})_{n\in\omega} \rangle  \in H_{|\alpha|^{+}}$. 
	
We will find $\vec{B}$ that witnesses $\lcc$ at $\alpha$ for $\mathbb{S}$.  There is $\vec{B_{0}} \in W_{\tau}$ where $\kappa^{+\tau} = |\alpha|$, which witnesses $\lcc$ at $\alpha$ in $W_{\tau}$ with respect to $(\mathcal{F}_{n})_{n \in \omega}$. Since $L_{\tau}[A_{\tau}]=H_{\tau}^{W_{\tau}} = H_{\tau}^{W_{\tau^{+}}}=L_{\tau}[A_{\tau^{+}}]$, it follows that there is a club $C \subseteq \kappa^{+\tau}$ such that $ \iota \in C $ implies $L_{\iota}[A_{\tau}] = L_{\iota}[\mathbb{A}]$. Thus $\vec{B}= \vec{B_0} \restriction C $ will witness $\lcc$ at $\alpha$ with respect to $(\mathcal{F}_n)_{n \in \omega}$ in $W_{\beta}$.
	\end{proof}

\begin{lemma}\label{Succ} Let $\kappa$ be a regular cardinal and $\beta$ an ordinal.  Suppose that there exists $\vec{M} = \langle L_{\alpha}[A] \mid \kappa \leq \alpha < \kappa^{+\beta} \rangle$ which witnesses $\lcc_{\reg}(\kappa,\kappa^{+\beta})$  and $\Psi(\vec{M},\kappa,\kappa^{+\beta})$ holds. If  $\kappa^{+\beta}$ is a regular cardinal, then $\mathbb{P}_{\kappa^{+\beta},\kappa^{+\beta+1}} \forces \lcc_{\reg}(\kappa,\kappa^{+\beta})$ and if $  \kappa^{+\beta}$ is a singular cardinal then $\mathbb{P}_{\kappa^{+\beta+1},\kappa^{+\beta+2}} \forces \lcc_{\reg}(\kappa,\kappa^{+\beta+1}) \wedge \Psi(\vec{M},\kappa,\kappa^{+\beta}) $.  
\end{lemma}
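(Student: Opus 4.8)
The plan is to exploit the strong closure of the forcing that gets added at this stage of the iteration: being highly closed, it cannot disturb any of the data living below its closure point. Consider first the case in which $\kappa^{+\beta}$ is regular and put $\mathbb{Q}:=\mathbb{P}_{\kappa^{+\beta},\kappa^{+\beta+1}}$. By Theorem~\ref{HWW}, applied with $\kappa^{+\beta}$ in place of $\kappa$, $\mathbb{Q}$ is ${<}\kappa^{+\beta}$-directed closed and $\kappa^{+\beta+1}$-cc; in particular $\mathbb{Q}$ adds no new sequence of length ${<}\kappa^{+\beta}$, hence no new subset of a set of size ${<}\kappa^{+\beta}$, and it preserves cardinals and cofinalities. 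Let $G$ be $\mathbb{Q}$-generic. By Fact~\ref{Htheta}, $H_{\kappa^{+\beta}}^{V[G]}=H_{\kappa^{+\beta}}^{V}$, so $\vec M$ is still a filtration for $H_{\kappa^{+\beta}}$ and $\Psi(\vec M,\kappa,\kappa^{+\beta})$ still holds, every value $M_\theta=H_\theta$ (for $\theta\in(\kappa,\kappa^{+\beta})\cap\reg$) being untouched. It then remains to see that $\vec M$ still witnesses $\lcc_{\reg}(\kappa,\kappa^{+\beta})$. Fix $\alpha\in(\kappa,\kappa^{+\beta})$ with $|\alpha|\in\reg$ and a parameter sequence $\mathcal F=\langle(F_n,k_n)\mid n\in\omega\rangle\in V[G]$ with $F_n\s(M_\alpha)^{k_n}$. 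Since $|M_\alpha|=|\alpha|<\kappa^{+\beta}$ and $\mathbb Q$ adds no new subset of a set of that size, $\mathcal F$ already belongs to $V$. Hence $V$ contains a sequence $\vec{\mathfrak B}$ witnessing $\lcc$ at $\alpha$ with respect to $\mathcal F$ in the sense of Definition~\ref{LCCupto}(3); and each clause there — elementarity into the fixed structure $\langle M_\alpha,\in,\vec M\restriction\alpha,(F_n)_n\rangle$, the $\s$-chain being increasing, continuous and converging to $M_\alpha$, and each collapse $\clps(\langle B_\tau,\in,\langle B_\delta\mid\delta\in B_\tau\cap\ord\rangle\rangle)$ being $\langle M_{\bar\tau},\in,\langle M_\delta\mid\delta<\bar\tau\rangle\rangle$ for some $\bar\tau<\kappa^{+\beta}$ — is absolute between $V$ and $V[G]$. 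So $\vec{\mathfrak B}$ still works in $V[G]$.

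For the case in which $\kappa^{+\beta}$ is singular I would run the same argument with $\mathbb{Q}:=\mathbb{P}_{\kappa^{+\beta+1},\kappa^{+\beta+2}}$, which by Theorem~\ref{HWW} (with $\kappa^{+\beta+1}$ in place of $\kappa$) is ${<}\kappa^{+\beta+1}$-directed closed, $\kappa^{+\beta+2}$-cc, and $\gch$-, cardinal- and cofinality-preserving; hence $\mathbb Q$ adds no new subset of any set of size $\le\kappa^{+\beta}$ and $H_{\kappa^{+\beta+1}}^{V[G]}=H_{\kappa^{+\beta+1}}^{V}$. The only extra ingredient is that $\lcc_{\reg}(\kappa,\kappa^{+\beta+1})$ asks for a filtration of length $\kappa^{+\beta+1}$, whereas $\vec M$ has length $\kappa^{+\beta}$; so, working in $V$ and using $\gch$ (available in the situation where the lemma is applied, and in any case valid below $\kappa^{+\beta}$ because $\vec M$ is an $L[A]$-filtration of $H_{\kappa^{+\beta}}$), I would extend $\vec M$ to a filtration $\vec{M}'=\langle L_\gamma[\mathbb A]\mid\kappa\le\gamma<\kappa^{+\beta+1}\rangle$ for $H_{\kappa^{+\beta+1}}$: pick $\mathbb A\s\kappa^{+\beta+1}$ with $\mathbb A\cap\kappa^{+\beta}=A$, so that $L_\gamma[\mathbb A]=L_\gamma[A]$ for all $\gamma<\kappa^{+\beta}$, and with $\mathbb A\restriction[\kappa^{+\beta},\kappa^{+\beta+1})$ coding a well-order of $H_{\kappa^{+\beta+1}}$ of order type $\kappa^{+\beta+1}$ in the standard way, so that $L_{\kappa^{+\beta+1}}[\mathbb A]=H_{\kappa^{+\beta+1}}$ (possible since $|H_{\kappa^{+\beta+1}}|=\kappa^{+\beta+1}$). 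Because $\kappa^{+\beta}$ is singular (cf.\ Remark~\ref{Inacc}), every $\alpha$ with $\kappa^{+\beta}\le\alpha<\kappa^{+\beta+1}$ has $|\alpha|=\kappa^{+\beta}\notin\reg$; hence the ordinals $\alpha\in(\kappa,\kappa^{+\beta+1})$ with $|\alpha|\in\reg$ are exactly those lying in $(\kappa,\kappa^{+\beta})$, and for each such $\alpha$ the collapsing clause of $\lcc$ mentions only $\vec{M}'\restriction\kappa^{+\beta}=\vec M$. Thus the witnesses demanded by $\lcc_{\reg}(\kappa,\kappa^{+\beta+1})$ are exactly the ones already handled above, they come from $V$, and — by the same absoluteness as before — they survive into $V[G]$; moreover $M'_\theta=H_\theta$ for every regular $\theta\in(\kappa,\kappa^{+\beta+1})$, since all such $\theta$ lie in $(\kappa,\kappa^{+\beta})$. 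This gives $\lcc_{\reg}(\kappa,\kappa^{+\beta+1})$, witnessed by $\vec{M}'$, together with $\Psi(\vec M,\kappa,\kappa^{+\beta})$, in $V[G]$.

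Apart from quoting Theorem~\ref{HWW} for the closure of $\mathbb Q$, the whole argument is absoluteness together with the principle that ${<}\lambda$-closed forcing changes nothing hereditarily of size ${<}\lambda$. The step I expect to take the most care is the singular case: checking that the collection of ordinals of regular cardinality does not grow when the interval is enlarged to $(\kappa,\kappa^{+\beta+1})$, and engineering the extension $\vec M\mapsto\vec{M}'$ so that it remains of the $L[\mathbb A]$-form — as Lemma~\ref{AbstractLimit} will later require — while agreeing with $\vec M$ below $\kappa^{+\beta}$ and capturing all of $H_{\kappa^{+\beta+1}}$ at the top.
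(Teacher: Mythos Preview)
Your argument is correct for the lemma as literally stated, but it takes a different route from the paper and, as a consequence, proves strictly less than the paper's proof does.

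The paper, in both cases, uses the generic predicate $B$ that the forcing $\mathbb{P}_{\kappa^{+\beta},\kappa^{+\beta+1}}$ (respectively $\mathbb{P}_{\kappa^{+\beta+1},\kappa^{+\beta+2}}$) produces via Theorem~\ref{HWW}, and splices it together with $A$ to form $C=(A\cap\kappa^{+\beta})\cup(B\setminus\kappa^{+\beta})$. The resulting $L[C]$-filtration has length $\kappa^{+\beta+1}$ in the regular case and $\kappa^{+\beta+2}$ in the singular case, and witnesses $\lcc_{\reg}(\kappa,\kappa^{+\beta+1})$, respectively $\lcc_{\reg}(\kappa,\kappa^{+\beta+2})$. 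In other words, the paper's proof establishes the conclusion with the upper index shifted up by one relative to the stated lemma; this is what is actually invoked in the successor step of Theorem~C, where from $\lcc_{\reg}(\kappa,\kappa^{+\theta})$ one needs to get $\lcc_{\reg}(\kappa,\kappa^{+\theta+1})$ after forcing.

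Your approach, by contrast, never touches the generic object: you rely solely on ${<}\kappa^{+\beta}$- (respectively ${<}\kappa^{+\beta+1}$-) closure plus absoluteness, and in the singular case you manufacture the extension $\vec M'$ already in $V$ by hand-coding $H_{\kappa^{+\beta+1}}$ into $\mathbb A\restriction[\kappa^{+\beta},\kappa^{+\beta+1})$. This is more elementary and perfectly valid for the conclusion as written, since the ordinals of regular cardinality in $(\kappa,\kappa^{+\beta+1})$ coincide with those in $(\kappa,\kappa^{+\beta})$ when $\kappa^{+\beta}$ is singular. What you lose is precisely the extra level of $\lcc$ that the generic $B$ buys: your filtration stops one step short of the paper's, and in the regular case you do not obtain any $\lcc$ on the new interval $(\kappa^{+\beta},\kappa^{+\beta+1})$ at all. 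So your proof suffices for the lemma's stated conclusion, but the paper's splicing argument is what makes the induction in Theorem~C go through.
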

\begin{proof} We split the proof into two cases depending on whether $\kappa^{+\beta}$ is regular or not. 

	$\blacktriangleright$ Suppose $\kappa^{+\beta}$ is a regular cardinal.  Let $B \subseteq (\kappa^{\beta},\kappa^{\beta+1})$ such that $\langle L_{\alpha}[B] \mid \alpha < \kappa^{+\beta+1} \rangle $ witnesses $\lcc_{\reg}(\kappa^{\beta},\kappa^{+\beta+1})$. Since $\mathbb{P}_{\kappa^{+\beta},\kappa^{+\beta+1}} $ is $< \kappa^{+\beta}$-closed, it follows that for $G$, $\mathbb{P}_{\kappa^{+\beta},\kappa^{+\beta+1}}$-generic we have, by Fact \ref{Htheta} that   $(H_{\kappa^{+\beta}})^{V[G]} = (H_{\kappa^{+\beta}})^{V}$.     	
	We then let $\vec{N}= \langle L_{\alpha}[C]   \mid \alpha < \kappa^{+\beta+1} \rangle $ where $C = (A \cap \kappa^{+\beta}) \cup (B \setminus \kappa^{+\beta})$, witness $\lcc_{\reg}(\kappa,\kappa^{+\beta+1})$.  
    	
    	$\blacktriangleright$ Suppose $\kappa^{+\beta}$ is a singular cardinal. Let $G$ be $\mathbb{P}_{\kappa^{+\beta+1},\kappa^{+\beta+2}}$-generic over $V$. Let $G$ be $\mathbb{P}$-generic, from Fact \ref{Htheta} it follows that for every cardinal $\theta < \kappa^{+\beta+1}$ we have $H_{\theta}^{V} = H_{\theta}^{V[G]}$.
    	
    	 Let $ B \subseteq \kappa^{+\beta+2}$ be such that $\vec{N}= \langle L_{\gamma}[B] \mid \gamma < \kappa^{+\beta+2} \rangle$ witnesses $\lcc(\kappa^{+\beta+1},\kappa^{+\beta+2}) $ in $V[G]$. Let $ C:= A \cup (B \setminus \kappa^{+\beta})$. Then $\vec{W}:= \langle L_{\alpha}[C] \mid \alpha < \kappa^{+\beta+2} \rangle $ witnesses $\lcc_{\reg}(\kappa,\kappa^{+\beta+2})$.     
	\end{proof}

\begin{thmc} \label{succSing} 	If $\gch$ holds and $\kappa$ is a regular cardinal and $\alpha$ is an ordinal, then there is a set forcing $\mathbb{P}$ which is $<\kappa$-directed closed and $\kappa^{+\alpha+1}$-cc, $\gch$ preserving such that in $V^{\mathbb{P}}$ there is a filtration $\langle M_{\alpha} \mid \alpha < \kappa^{+\alpha} \rangle $ such that $\Psi(\vec{M},\kappa,\kappa^{+\beta+1})$ holds and $\langle M_{\alpha}\mid \alpha < \kappa^{+\alpha}\rangle \models \lcc_{\reg}(\kappa,\kappa^{+\alpha}) $
\end{thmc}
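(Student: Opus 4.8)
The plan is to take $\mathbb{P}:=\mathbb{P}_{\kappa,\kappa^{+\alpha}}$, the iteration defined above, and to verify the conclusion by induction on $\alpha$. That $\mathbb{P}$ is well defined (the iteration never ``stops'') and that the preservation half of the statement holds — $\mathbb{P}$ is $<\kappa$-directed closed, $\kappa^{+\alpha+1}$-cc, and preserves $\gch$, cardinals and cofinalities — is already delivered by the preservation lemma established above, once one observes that its closure part follows from Fact~\ref{Baum}: every iterand $\mathbb{P}_{\kappa^{+\theta},\kappa^{+\theta+1}}$ is $<\kappa^{+\theta}$-directed closed, hence $<\kappa$-directed closed, and at a limit ordinal $\beta$ with $\cf(\beta)<\kappa$ we have $\cf(\kappa^{+\beta})=\cf(\beta)<\kappa$, so $\kappa^{+\beta}$ is singular and the iteration takes an inverse limit there, meeting the hypotheses of Fact~\ref{Baum}. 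So the real work is to produce, in $V^{\mathbb{P}}$, a filtration witnessing $\Psi$ and $\lcc_{\reg}(\kappa,\kappa^{+\alpha})$.

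To run the induction smoothly I would strengthen the statement: for every $\gamma\le\alpha$, $\mathbb{P}_{\kappa,\kappa^{+\gamma}}$ forces that there is $A\subseteq\kappa^{+\gamma}$ such that $\vec{M}=\langle L_\zeta[A]\mid\zeta<\kappa^{+\gamma}\rangle$ is a filtration with $L_{\kappa^{+\gamma}}[A]=H_{\kappa^{+\gamma}}$ for which $\Psi(\vec{M},\kappa,\kappa^{+\gamma})$ and $\lcc_{\reg}(\kappa,\kappa^{+\gamma})$ hold. The base case $\gamma=1$ is Theorem~\ref{HWW} (on the interval $(\kappa,\kappa^{+})$ every ordinal has cardinality the regular cardinal $\kappa$, so $\lcc$ and $\lcc_{\reg}$ agree there and $\Psi$ is vacuous). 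For a successor $\gamma=\theta+1$ I would apply the relevant case of Lemma~\ref{Succ} with the inductive filtration as its hypothesis $\vec{M}$: when $\kappa^{+\theta}$ is regular we force with $\dot{\mathbb{Q}}_\theta=\mathbb{P}_{\kappa^{+\theta},\kappa^{+\theta+1}}$, when it is singular $\dot{\mathbb{Q}}_\theta$ is trivial; in either case the patched predicate $C=(A\cap\kappa^{+\theta})\cup(B\setminus\kappa^{+\theta})$ produced there has the required shape, and $L_{\kappa^{+\gamma}}[C]=H_{\kappa^{+\gamma}}$ follows from Fact~\ref{Htheta} applied to the $<\kappa^{+\theta}$-closed block forcing together with the boundary clauses of Theorem~\ref{HWW}. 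For $\gamma$ a limit ordinal I would invoke Lemma~\ref{AbstractLimit} with $\beta=\gamma$ and $W_\tau=V[G\cap\mathbb{P}_{\kappa,\kappa^{+\tau}}]$: hypothesis~(1) of that lemma is exactly the inductive hypothesis at the stages $\tau<\gamma$; hypotheses~(2) and~(3), asserting that the relevant $H$'s computed at stage $\tau$ are not disturbed later, hold because the tail $\dot{\mathbb{R}}_{\kappa^{+\tau},\kappa^{+\gamma}}$ is $<\kappa^{+\tau}$-closed by Fact~\ref{Baum} and so adds no new bounded subsets; and hypothesis~(4) holds because the assembled predicate $\mathbb{A}$ is definable from the generic $G\cap\mathbb{P}_{\kappa,\kappa^{+\gamma}}$, hence lies in $W_\gamma$. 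Lemma~\ref{AbstractLimit} then yields that $\langle L_\zeta[\mathbb{A}]\mid\zeta<\kappa^{+\gamma}\rangle$ witnesses $\lcc_{\reg}(\kappa,\kappa^{+\gamma})$, and $\Psi$ together with the boundary equation $L_{\kappa^{+\gamma}}[\mathbb{A}]=H_{\kappa^{+\gamma}}$ are read off from the corresponding facts at the stages below, using that the iteration preserves $H_\theta$ for regular $\theta$. Setting $\gamma=\alpha$ proves the theorem.

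The main obstacle, as usual in such iterations, is bookkeeping the coherence of the witnessing predicates so that the hypotheses of Lemma~\ref{AbstractLimit} genuinely hold: one must carry in the induction the precise shape ``$\vec{M}=\langle L_\zeta[A]\mid\zeta<\kappa^{+\gamma}\rangle$ with $A\subseteq\kappa^{+\gamma}$ and $L_{\kappa^{+\gamma}}[A]=H_{\kappa^{+\gamma}}$'', and one must check at every stage that the tail of the iteration is closed enough that $H_{\kappa^{+\tau}}$, and therefore $L_{\kappa^{+\tau}}[A_\tau]$, is left untouched at later stages — this is precisely what makes the union $\mathbb{A}$ reconstruct a single filtration rather than an incoherent sequence, and it is the point where the inverse-limit choices in the definition of $\mathbb{P}$ and the closure computations of Fact~\ref{Baum} are essential. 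A secondary and more routine point is that at ordinals of singular cardinality we make no condensation claim — this is exactly the content of the $\reg$ qualifier in $\lcc_{\reg}$ — yet $M_\zeta$ must still be defined there for $\vec{M}$ to be a genuine filtration; this is handled by simply continuing the $L[\mathbb{A}]$-hierarchy, clause~(3) of the definition of filtration being verified from $\gch$, which the iteration preserves.
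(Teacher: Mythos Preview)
Your proposal is correct and follows essentially the same route as the paper's own proof: induction on the length of the iteration, with Theorem~\ref{HWW} as base case, Lemma~\ref{Succ} at successor stages, and Lemma~\ref{AbstractLimit} at limits, the key verification being that the tail $\dot{\mathbb{R}}_{\kappa^{+\tau},\kappa^{+\gamma}}$ is sufficiently closed (via Fact~\ref{Baum}) so that $H_{\kappa^{+\tau}}$ is frozen and the hypotheses of Lemma~\ref{AbstractLimit} hold. One small bookkeeping point: your strengthened induction hypothesis includes the boundary clause $L_{\kappa^{+\gamma}}[A]=H_{\kappa^{+\gamma}}$, but when $\gamma=\theta+1$ with $\kappa^{+\theta}$ singular the iterand $\dot{\mathbb{Q}}_\theta$ is trivial and this clause need not hold at that stage --- it is, however, not needed, since $\Psi$ only demands $M_\mu=H_\mu$ at \emph{regular} $\mu$, so simply drop it from the induction hypothesis (the paper does not carry it either).
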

\begin{proof} We prove by induction  that the following hold:
	
		\begin{enumerate} 
		\item for each $\tau < \beta$  there exists $A_{\tau} \subseteq \kappa^{+\tau} \in V[G_{\tau}]$ such that, in $V[G_{\tau}]$ we have that  $ \vec{M}^{\tau} = \langle L[A_{\tau}]_{\zeta} \mid \zeta < \kappa^{+\tau + 1} \rangle \models \lcc_{\reg}(\kappa,\kappa^{+\tau})$ and $\Psi(\vec{M},\kappa,\kappa^{+\tau})$
		\item For $\tau_0 < \tau_1 < \beta $ we have $H_{\tau_{0}^{+}}^{V[G_{\tau}]}=L_{\tau_{0}^{+}}[A_{\tau_0}] = L_{\tau_{0}^{+}}[A_{\tau_1}]=H_{\tau_{1}^{+}}^{V[G_{\tau}] }$, 
		\item for every $\tau < \beta$ we have $H_{\tau}^{V[G_{\tau}]} = H_{\tau}^{V[G_{\beta}]}$ and 
		\item   $$\mathbb{A}:=\bigcup\{ A_\tau 	\restriction (\kappa^{+\tau},\kappa^{+\tau+1}) \mid  \reg(\kappa^{+\tau}) \wedge \tau < \beta \} \cup \bigcup \{A_\tau 	\restriction (\kappa^{+\tau},\kappa^{+\tau+2}) \mid \text{Sing}(\kappa^{+\tau})\}$$ is an element of 
		$ V[G_{\beta}]$.
	\end{enumerate}

	 If $\beta=1$ the lemma follows from Theorem \ref{HWW}. If $\beta = \theta+1$, from our induction hypothesis and Lemma \ref{Succ} it follows that $\mathbb{P}_{\kappa,\kappa^{+\beta+}} \forces \lcc_{\reg}(\kappa,\kappa^{+\beta})$. 
	If $\beta$ is a limit ordinal all we need to verify is that  \begin{equation}\label{eq1} \begin{gathered} H_{\kappa^{+\tau+1}}^{V_{\tau}} = H_{\kappa^{+\tau+1}}^{V_{\beta}} \end{gathered} \end{equation} for every $\tau < \beta$ in order to apply Lemma \ref{AbstractLimit}. Since for each $\tau < \beta$ we have that $\mathbb{P}_{\kappa,\kappa^{+\zeta}} \forces ``\dot{\mathbb{R}}_{\kappa^{+\zeta}} \text{ is }  <\kappa^{+\zeta}\text{-closed}"$ and $\mathbb{P}_{\kappa,\kappa^{+\zeta}}$ preserve cardinals and cofinalities \eqref{eq1} follows from Fact \ref{Htheta}

 	\end{proof}

\section{Applciations}

In this section we show that the iteration of the forcing from \cite{HWW} can replace some uses of the main forcing in \cite{FHl}.

\begin{defn} Let $\mu, A, \vec{F}$ be sets. We say that $\Xi(A,\mu,\vec{f})$ holds iff $\mu$ is a regular cardinal, $A $ is a function such that $A:\mu^{+}\rightarrow 2 $ and $\vec{f}$ is a sequence of bijections $\langle f_{\beta} \mid \kappa \leq \beta < \mu^{+} \rangle $ such that for each $\beta < \mu$, $f_{\beta}:\mu \rightarrow \beta$, and the following hold:
	\begin{itemize}
		\item $H_{\mu^{+}} = L_{\mu^{+}}[A]$,
		\item $ (\xi,1)\in A \setminus \mu \leftrightarrow \exists C ( C \text{ is a club } \wedge C \subseteq \{\gamma < \mu \mid \otp(f_\xi[\gamma])\in A \})$,
		\item $ (\xi,1) \in A  \setminus \mu \leftrightarrow \exists C ( C \text{ is a club } \wedge C \subseteq \{\gamma < \mu \mid \otp(f_\xi[\gamma])\in A \})$.
	\end{itemize}
\end{defn}
\begin{lemma}\label{complexity}  Let $\mu$ be a regular cardinal, $A $ a function $A:\mu^{+}\rightarrow 2 $ and $\vec{f}=\langle f_{\beta} \mid \kappa \leq \beta < \mu^{+} \rangle $ a sequence of bijections such that $f_{\beta}:\mu \rightarrow \beta$ for each $\beta < \mu^{+}$. Suppose $\Xi(A,\mu,\vec{f})$ holds. Given $\zeta \in \mu^{+}\setminus \mu$, the following are equivalent: 
 \begin{enumerate}
 	\item $\zeta \in A \setminus \mu $,
 	\item $\exists f  \exists C ( f:\mu \rightarrow \zeta \wedge f \text{ is a bijection } \wedge  C \text{ is a club } \wedge C \subseteq \{\gamma < \mu \mid \otp(f[\gamma])\in A \}$,
 	\item $ \forall f  \exists C ( f:\mu \rightarrow \zeta \wedge f \text{ is a bijection } \wedge  C \text{ is a club } \wedge C \subseteq \{\gamma < \mu \mid \otp(f[\gamma])\in A \}$,
 	\item $\forall f  \forall C ( f:\mu \rightarrow \zeta \wedge f \text{ is a bijection } \wedge  C \text{ is a club } \rightarrow C \not\subseteq \{\gamma < \mu \mid \otp(f[\gamma])\not\in A \}$.
 \end{enumerate} Moreover $\xi \in A \setminus \mu$ is $\Delta_1(\{A \restriction \mu,\xi\})$ over $H_{\mu^{+}}$.
\end{lemma}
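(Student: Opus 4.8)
The plan is to derive everything from one combinatorial fact together with two of the defining clauses of $\Xi(A,\mu,\vec f)$ --- namely that $\zeta\in A\setminus\mu$ iff $\{\gamma<\mu\mid\otp(f_\zeta[\gamma])\in A\}$ contains a club of $\mu$, and $\zeta\notin A\setminus\mu$ iff $\{\gamma<\mu\mid\otp(f_\zeta[\gamma])\notin A\}$ contains a club. The combinatorial fact I would record first is: for any two bijections $f,g\colon\mu\to\zeta$ the set $D_{f,g}:=\{\gamma<\mu\mid f[\gamma]=g[\gamma]\}$ contains a club of $\mu$. Closure is automatic, since $f[\sup_i\gamma_i]=\bigcup_i f[\gamma_i]$ for any increasing sequence; unboundedness uses that $\mu$ is regular --- given $\gamma_0$, choose $\gamma_{n+1}<\mu$ above $\gamma_n$ and above $\sup(g^{-1}[f[\gamma_n]])\cup\sup(f^{-1}[g[\gamma_n]])$, and put $\gamma:=\sup_n\gamma_n$, so that $f[\gamma]=g[\gamma]$. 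Since $\gamma\in D_{f,g}$ forces $\otp(f[\gamma])=\otp(g[\gamma])$, the sets $\{\gamma<\mu\mid\otp(f[\gamma])\in A\}$ and $\{\gamma<\mu\mid\otp(g[\gamma])\in A\}$ (and likewise the two ``$\notin A$'' sets) agree on the club $D_{f,g}$; in particular one of them contains a club exactly when the other does.

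With this in hand the equivalences are routine. For $(1)\Leftrightarrow(2)$: by $\Xi$, $\zeta\in A\setminus\mu$ iff some club is contained in $\{\gamma<\mu\mid\otp(f_\zeta[\gamma])\in A\}$, so $(1)\Rightarrow(2)$ is witnessed by $f=f_\zeta$; conversely, if $(f,C)$ witnesses $(2)$ then $C\cap D_{f,f_\zeta}$ is a club contained in $\{\gamma<\mu\mid\otp(f_\zeta[\gamma])\in A\}$, whence $\Xi$ gives $\zeta\in A\setminus\mu$. For $(1)\Rightarrow(3)$ one repeats this: if $C_0$ is the club supplied by $\Xi$ and $f\colon\mu\to\zeta$ is an arbitrary bijection, then $C_0\cap D_{f,f_\zeta}$ witnesses the inner existential for that $f$; and $(3)\Rightarrow(2)$ is immediate, since a bijection $\mu\to\zeta$ exists (for instance $f_\zeta$, as $\mu\le\zeta<\mu^+$). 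Finally $(4)$ is literally the negation of ``there are a bijection $f\colon\mu\to\zeta$ and a club $C$ with $C\subseteq\{\gamma<\mu\mid\otp(f[\gamma])\notin A\}$''; running the previous step with $\in A$ replaced by $\notin A$ throughout and using the second clause of $\Xi$ shows this existential statement is equivalent to $\zeta\notin A\setminus\mu$, which, since $A$ is a function into $2$, is exactly the negation of $\zeta\in A\setminus\mu$. Hence $(4)\Leftrightarrow(1)$.

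For the ``moreover'' I would read off the complexity from $(2)$ and its mirror image rather than directly from $(4)$. Clause $(2)$ defines $\zeta\in A\setminus\mu$ by a $\Sigma_1$ formula over $\langle H_{\mu^+},\in\rangle$ with parameters $A\restriction\mu$ and $\zeta$: any witnessing $f$ and $C$ have transitive closure of size $\le\mu$ and so lie in $H_{\mu^+}$; ``$f$ is a bijection $\mu\to\zeta$'' and ``$C$ is a club of $\mu$'' are $\Delta_0$ (with $\mu=\dom(A\restriction\mu)$); and the only reference to $A$ is at arguments $\otp(f[\gamma])$ with $\gamma<\mu$, which are $<\mu$ as $|f[\gamma]|<\mu$, so membership there is decided by $A\restriction\mu$ (the operator $\otp$ is $\Delta_1$ over $H_{\mu^+}$ and is absorbed into the leading existential quantifier). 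The same computation applied to the mirror statement shows $\zeta\notin A\setminus\mu$ is also $\Sigma_1(\{A\restriction\mu,\zeta\})$ over $H_{\mu^+}$; since the two statements are complementary on $\mu^+\setminus\mu$, each is simultaneously $\Sigma_1$ and $\Pi_1$, hence $\Delta_1(\{A\restriction\mu,\zeta\})$ over $H_{\mu^+}$ --- and clause $(4)$, after rewriting ``$\otp(f[\gamma])\in A$'' in its $\Pi_1$ form, is the resulting explicit $\Pi_1$ definition. Taking $\xi=\zeta$ gives the statement as phrased.

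I expect the only real friction to be bookkeeping: verifying that the club fact uses nothing beyond regularity of $\mu$ (the case $\mu=\omega$ deserves a separate look, since a countable increasing sequence there has supremum $\mu$), and being careful in the complexity count that the occurrence of $\otp$ sits inside the quantifier block in the right way --- which is precisely why I would route the $\Pi_1$ half through the complementation trick (``both the formula and its negation are $\Sigma_1$'') instead of certifying $\Pi_1$-ness of $(4)$ by hand.
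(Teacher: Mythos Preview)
Your proof is correct and matches the paper's approach: both derive the equivalences from the bijection-independence of the club condition, which the paper states tersely as ``from the regularity of $\mu$'' and which you make explicit via the club $D_{f,g}=\{\gamma<\mu\mid f[\gamma]=g[\gamma]\}$. The only divergence is in the $\Delta_1$ claim: the paper reads $\Sigma_1$ from (2) and $\Pi_1$ directly from (4) after rewriting the $\otp$ clause in its universal form, whereas you obtain $\Pi_1$ by showing the negation is $\Sigma_1$ via the mirror of (2) and invoking complementation --- your route is a minor variation that sidesteps the quantifier bookkeeping the paper's direct verification of (4) requires.
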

\begin{proof} Let $\zeta \in \mu^{+}\setminus \mu$. As $\mu, A, \vec{f}$ witness the condensation axiom, it follows that 
	$\zeta \in A \setminus \mu \leftrightarrow \exists C ( C \text{ is a club } \wedge C \subseteq \{\gamma < \mu \mid \otp(f_\zeta[\gamma])\in A \}$. 
	Let $f$  be a bijection from $\mu$ onto $\zeta$. Then  from the regularity of $\mu$ it follows that there exists $ ( C \text{ a club }$ such that $D \subseteq \{\gamma < \mu \mid \otp(f_\zeta[\gamma])\in A \}$ iff there exists $D $ a club such that $ C \subseteq \{\gamma < \mu \mid \otp(f_\zeta[\gamma])\in A \}$. 
	Thus (1) (2) and (3) are equivalent. 
	
	Let us verify that (4) is equivalent to (1).  Since $ \mu,A,\vec{f}$ witness the condensation axiom, it follows that $ \zeta \not\in A $ iff  there exists a club $C$ such that $ C \subseteq \{ \gamma < \mu \mid \otp(f_{\zeta}[\gamma]) \not\in A \}$. Let $f$ be a bijection $f:\mu \rightarrow \zeta$. From the regularity of $\mu$ it follows that there exists a club $C$ such that $ C \subseteq \{ \gamma < \mu \mid \otp(f_{\zeta}[\gamma]) \not\in A \}$ iff there exists a club $D$ such that $ D \subseteq \{ \gamma < \mu \mid \otp(f_{\zeta}[\gamma]) \not\in A \}$. Thus (1) is equivalent to (4). 
	
	The moreover part follows from the equivalence between (1),(2) and (4), and the fact that $ C \not\subseteq  \{\gamma < \mu \mid \otp(f[\gamma])\not\in A \}$ is equivalent to $ \forall h \forall \gamma \forall \beta ( (\gamma \in C \wedge h:\beta \rightarrow f[\gamma] \wedge h \text{ is an isomorphis}) \rightarrow  (\beta,0) \in A)$.
	\end{proof}
Our next result, Theorem~D, is an adaptation of \cite[Theorem~39]{FHl}.

\begin{thmd}  Suppose that $\theta$ is an ordinal, $\kappa$ is a regular cardinal and $\kappa^{+\theta}$ is a regular cardinal. Then $\mathbb{P}_{\kappa,\kappa^{+\theta+1}}$ forces that $\lcc_{\reg}(\kappa,\kappa^{+\theta+1}) $ holds and that there exists a well order of $H_{\kappa^{+}}$ that is $\Delta_1$ definable over $H_{\kappa^{+}}$ in a parameter $a \subseteq \kappa^{+\theta}$. 
\end{thmd}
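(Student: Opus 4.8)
The plan is to split Theorem~D into its two assertions: the first, $\lcc_{\reg}(\kappa,\kappa^{+\theta+1})$, is essentially Theorem~C, and the second, the $\Delta_1$-definable wellorder, is obtained by reading off the name-coding that the forcing of \cite{HWW} builds into the first stage of the iteration. Write $\mathbb{P}:=\mathbb{P}_{\kappa,\kappa^{+\theta+1}}$, fix a $\mathbb{P}$-generic $G$, and for $\theta\ge 1$ factor $\mathbb{P}=\mathbb{P}_{\kappa,\kappa^+}*\dot{\mathbb{R}}_{\kappa^+,\kappa^{+\theta+1}}$, putting $G_1:=G\restriction\mathbb{P}_{\kappa,\kappa^+}$ (and $G_1:=G$, $\mathbb{P}=\mathbb{P}_{\kappa,\kappa^+}$, when $\theta=0$). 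For the first assertion I would apply Theorem~C to $\kappa$ and the ordinal $\theta$: since $\kappa^{+\theta}$ is regular the final iterand $\mathbb{P}_{\kappa^{+\theta},\kappa^{+\theta+1}}$ is genuinely forced, and Lemma \ref{Succ} upgrades $\lcc_{\reg}(\kappa,\kappa^{+\theta})$ to $\lcc_{\reg}(\kappa,\kappa^{+\theta+1})$; thus in $V[G]$ there is $\mathbb{A}\subseteq\kappa^{+\theta+1}$ with $\langle L_\zeta[\mathbb{A}]\mid\zeta<\kappa^{+\theta+1}\rangle\models\lcc_{\reg}(\kappa,\kappa^{+\theta+1})$, and $\mathbb{A}$ agrees on $[\kappa,\kappa^+)$ with the predicate $A$ that Theorem \ref{HWW} yields in $V[G_1]$, where $L_{\kappa^+}[A]=H_{\kappa^+}^{V[G_1]}$.

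I would next check that moving from $V[G_1]$ to $V[G]$ affects nothing that matters for $H_{\kappa^+}$. Every iterand of the tail $\dot{\mathbb{R}}_{\kappa^+,\kappa^{+\theta+1}}$ is trivial or $<\kappa^{+\tau}$-directed closed for some $\tau\ge 1$, hence $<\kappa^+$-closed; and at any limit stage $\beta$ with $\cf(\beta)\le\kappa$ the value $\kappa^{+\beta}$ has cofinality $\cf(\beta)<\kappa^{+\beta}$ (Remark \ref{Inacc}), so it is singular and the iteration takes an inverse limit there. By Fact \ref{Baum} the tail is therefore $<\kappa^+$-closed, and by Fact \ref{Htheta} $H_{\kappa^+}^{V[G]}=H_{\kappa^+}^{V[G_1]}$. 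In particular $A$, the fixed ground-model wellorder $\mathcal{W}$ of $H_{\kappa^+}$ of order type $\kappa^+$ that \cite{HWW} uses to enumerate nice names, and the bijections $\vec{f}=\langle f_\beta\mid\kappa\le\beta<\kappa^+\rangle$, $f_\beta\colon\kappa\to\beta$, added by $\mathbb{P}_{\kappa,\kappa^+}$ (Remark \ref{SeqBijections}) all persist into $V[G]$, and — $H_{\kappa^+}$ and $A$ being unchanged — the coding property $\Xi(A,\kappa,\vec{f})$ of Lemma \ref{complexity} still holds in $V[G]$.

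The heart of the proof is to extract a $\Delta_1$-definable wellorder from $A$. Recall that in \cite{HWW} the bit $A(\beta)$ for $\beta\in[\kappa,\kappa^+)$ equals $1$ exactly when $\beta$ is the G\"odel code ${\prec}\gamma,{\prec}\delta,\varepsilon{\succ}{\succ}$ of a triple with $\varepsilon<\kappa$ in the realization of the $\gamma$-th (in the sense of $\mathcal{W}$) nice $\mathbb{H}_\delta$-name for a subset of $\kappa$; hence for all $\gamma,\delta<\kappa^+$ the set $x_{\gamma,\delta}:=\{\varepsilon<\kappa\mid{\prec}\gamma,{\prec}\delta,\varepsilon{\succ}{\succ}\in A\}$ is read off from $A$, and — because $\mathbb{P}_{\kappa,\kappa^+}$ is $\kappa^+$-cc and the tail adds no subset of $\kappa$ — $\langle x_{\gamma,\delta}\mid\gamma,\delta<\kappa^+\rangle$ enumerates $\mathcal{P}(\kappa)\cap H_{\kappa^+}^{V[G]}$. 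I would wellorder $\mathcal{P}(\kappa)\cap H_{\kappa^+}$ by comparing, for $x$ and $y$, the $\in$-least pair ${\prec}\gamma,\delta{\succ}$ with $x=x_{\gamma,\delta}$ against the same for $y$, and then extend this to all of $H_{\kappa^+}$ in the standard way: each $u\in H_{\kappa^+}$ is the transitive collapse of some well-founded $(\gamma,R)$ with $\gamma<\kappa^+$ and $R\subseteq\gamma\times\gamma$, and via a bijection $\kappa\to\gamma$ such a pair is itself coded by a subset of $\kappa$, so one compares elements of $H_{\kappa^+}$ through the least codes of pairs representing them, quantifying over the bijections exactly as in the four interchangeable clauses of Lemma \ref{complexity}. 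This description invokes $A$ only through assertions ``$\beta\in A$'' with $\beta\in[\kappa,\kappa^+)$, and by Lemma \ref{complexity} each such assertion is $\Delta_1$ over $H_{\kappa^+}$ in the single parameter $a:=A\cap\kappa$; substituting these $\Delta_1(a)$ formulas — and keeping both the $\Sigma_1$ and the $\Pi_1$ form available at each clause, as in \cite[Theorem~39]{FHl} — exhibits the resulting wellorder of $H_{\kappa^+}$ as $\Delta_1$ over $H_{\kappa^+}$ in $a$. Since $a\subseteq\kappa\subseteq\kappa^{+\theta}$, this is the required parameter.

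I expect the last point to be the main obstacle: verifying that the unfolding of the nice names, the passage through the bijections $f_\beta$, and the formation of transitive collapses of well-founded relations coded by subsets of $\kappa$ can be carried out $\Delta_1$-uniformly over $H_{\kappa^+}$ once ``$\beta\in A$'' has been replaced by its $\Delta_1(a)$ equivalent from Lemma \ref{complexity} — that is, that every clause of the recursion admits simultaneously a $\Sigma_1$ and a $\Pi_1$ rendering. This is the ingredient carried over from \cite[Theorem~39]{FHl}; everything else reduces to Theorem~C, Remark \ref{Inacc}, Fact \ref{Baum}, Fact \ref{Htheta} and Remark \ref{SeqBijections}.
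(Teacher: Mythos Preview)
Your argument is correct for the statement as written, but it differs from the paper's proof in two respects worth noting.

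First, the paper's proof actually operates at the \emph{top} of the iteration, not the bottom: it takes $\mu=\kappa^{+\theta}$, uses the bijections $f_\beta\colon\kappa^{+\theta}\to\beta$ for $\beta\in(\kappa^{+\theta},\kappa^{+\theta+1})$ added by the last iterand $\mathbb{P}_{\kappa^{+\theta},\kappa^{+\theta+1}}$, and then applies Lemma~\ref{complexity} with this $\mu$ to show that $A\cap\kappa^{+\theta+1}$ is $\Delta_1$ over $H_{\kappa^{+\theta+1}}$ in the parameter $A\cap\kappa^{+\theta}$. (The occurrence of ``$H_{\kappa^{+}}$'' in the theorem statement is evidently a typo for $H_{\kappa^{+\theta+1}}$; otherwise there would be no reason to allow the parameter to be a subset of $\kappa^{+\theta}$ rather than just of $\kappa$, and the proof would not match the statement.) Your choice to work at $\mu=\kappa$ and factor off the first iterand is fine for the literal statement, and your closure argument for the tail via Remark~\ref{Inacc} and Fact~\ref{Baum} is correct, but it yields only a wellorder of $H_{\kappa^{+}}$, which is weaker than what the paper's proof delivers.

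Second, the paper obtains the wellorder more directly than you do: since $H_{\kappa^{+\theta+1}}=L_{\kappa^{+\theta+1}}[A]$, it simply takes the canonical $\Delta_1(A)$ wellorder of $L_{\kappa^{+\theta+1}}[A]$ and composes it with the $\Delta_1(A\cap\kappa^{+\theta})$ definition of $A$ coming from Lemma~\ref{complexity}. Your route --- enumerating subsets of $\kappa$ via the nice-name coding $x_{\gamma,\delta}$, then coding arbitrary elements of $H_{\kappa^{+}}$ by well-founded relations on $\kappa$ --- certainly works, but it reproves by hand what the $L[A]$-wellorder already gives for free. The advantage of your explicit construction is that it makes transparent exactly which clauses need $\Sigma_1$/$\Pi_1$ alternatives; the advantage of the paper's is brevity and that it scales immediately to the intended level $\kappa^{+\theta+1}$.
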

\begin{proof}  We have that $\mathbb{P}_{\kappa,\kappa^{+\theta+1}}$ forces that there exists $\vec{M}= \langle M_{\alpha} \mid \alpha < \kappa^{+\theta+1} \rangle$, a filtration, such that \begin{enumerate}
		\item  $H_{\kappa^{+\beta}}=M_{\kappa^{+\beta}}$ for every $\beta \leq \theta+1$,  
		\item  there exists $ A \subseteq 	\kappa^{+\theta+1}$ such that for all $\alpha < \kappa^{+\theta+1}$ we have $ M_{\alpha}= L_{\alpha}[A]$
		\item  $\langle M,\in,\vec{M} \rangle \models \lcc_{\reg}(\kappa,\kappa^{+\theta+1})$
		\end{enumerate} 
	 Let $ \langle f_{\beta} \mid \kappa < \beta < \kappa^{+}  \rangle $ be the sequence of bijections obtained by forcing with $\mathbb{P}_{\kappa,\kappa^{+}}$, see remark \ref{SeqBijections}. Then we have that $ \beta \in A$ iff $\{\gamma < \kappa^{+\theta} \mid \otp(f_{\beta}[\delta]) \in A \}$ contains a club and $\beta \not\in A $ iff $ \{ \gamma < \kappa^{+\theta} \mid  \otp(f_{\beta}[\gamma]) \not\in A \} $ contains a club. 
	 
	 Therefore by Lemma \ref{complexity} we can define $A \cap \kappa^{+\theta+1}$ in $H_{\kappa^{+\theta+1}}$ using $A \cap \kappa^{+\theta}$ with a $\Delta_1$ formula. The concatenation of the definition of $A$ with the $\Delta_1$ well order of $L_{\kappa^{+\theta+1}}[A]$ gives the $\Delta_1$ well order we sought.

\end{proof}

\begin{cord}  Suppose that $\theta$ is an ordinal, $\kappa$ is a regular cardinal. Then $\mathbb{P}_{\kappa,\kappa^{+\theta+1}}$ forces that for every $S \subseteq \kappa$ stationary we have $\Dl^{*}_{S}(\Pi^{1}_{2})$ and in particular $\diamondsuit(S)$.
	\end{cord}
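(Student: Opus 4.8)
The plan is to reduce everything to the first nontrivial stage $\mathbb{P}_{\kappa,\kappa^{+}}$ of the iteration. Write $\mathbb{P}_{\kappa,\kappa^{+\theta+1}} = \mathbb{P}_{\kappa,\kappa^{+}} * \dot{\mathbb{R}}$, where $\dot{\mathbb{R}} = \dot{\mathbb{R}}_{\kappa^{+},\kappa^{+\theta+1}}$ is the tail of the iteration from index $1$ onward. Each nontrivial one-step forcing occurring in $\dot{\mathbb{R}}$ is (forced to be) $<\kappa^{+}$-closed: every $\dot{\mathbb{Q}}_{\tau}$ with $\tau\geq 1$ is either trivial or equal to a $\mathbb{P}_{\kappa^{+\tau}}$, hence $<\kappa^{+\tau}$-directed closed and in particular $\kappa^{+}$-directed closed, and a direct limit is taken only at stages $\beta$ with $\kappa^{+\beta}$ regular, which by Remark~\ref{Inacc} forces $\beta$ to be weakly inaccessible with $\cf(\beta)=\beta>\kappa^{+}$; so an inverse limit is taken at every limit $\beta$ with $\cf(\beta)<\kappa^{+}$. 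Thus Fact~\ref{Baum} gives $\mathbb{P}_{\kappa,\kappa^{+}}\forces$ ``$\dot{\mathbb{R}}$ is $\kappa^{+}$-directed closed''. Being $<\kappa^{+}$-closed and cardinal preserving, $\dot{\mathbb{R}}$ adds no new subsets of $\kappa$ and fixes $H_{\kappa^{+}}$ (Fact~\ref{Htheta}). Consequently, a set $S\subseteq\kappa$ is stationary in $V^{\mathbb{P}_{\kappa,\kappa^{+\theta+1}}}$ if and only if it is already stationary in $V^{\mathbb{P}_{\kappa,\kappa^{+}}}$; moreover all $\Pi^{1}_{2}$ assertions over $H_{\kappa}$ are absolute between these two models, and any candidate $\Dl^{*}_{S}(\Pi^{1}_{2})$-sequence, being an element of $H_{\kappa^{+}}$, is common to both. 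So it suffices to prove that $\mathbb{P}_{\kappa,\kappa^{+}}$ forces $\Dl^{*}_{S}(\Pi^{1}_{2})$ for every stationary $S\subseteq\kappa$.

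To obtain this I would apply Theorem~D in the instance $\theta=0$ (which rests on Theorem~\ref{HWW} and Lemma~\ref{complexity}): $\mathbb{P}_{\kappa,\kappa^{+}}$ forces that $\lcc_{\reg}(\kappa,\kappa^{+})$ holds, witnessed by a filtration $\vec{M}=\langle L_{\beta}[A]\mid\beta<\kappa^{+}\rangle$ with $M_{\kappa}=H_{\kappa}$ and $M_{\kappa^{+}}=H_{\kappa^{+}}$ for some $A\subseteq\kappa^{+}$ — and since every ordinal in $(\kappa,\kappa^{+})$ has the regular cardinality $\kappa$, this is genuine $\lcc$ on the whole interval $(\kappa,\kappa^{+})$ — and it also forces that there is a well-order of $H_{\kappa^{+}}$ which is $\Delta_{1}$-definable over $H_{\kappa^{+}}$ from a parameter $a\subseteq\kappa$; by Theorem~\ref{HWW} the forcing preserves cardinals, cofinalities and $\gch$, so $\kappa$ stays regular. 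This is exactly the condensation-plus-definability configuration from which \cite{FMR} derives $\Dl^{*}_{S}(\Pi^{1}_{2})$ for every stationary $S\subseteq\kappa$: given such an $S$ and an object described by a $\Pi^{1}_{2}$ formula over $H_{\kappa}$, one picks $\alpha\in(\kappa,\kappa^{+})$ with the object in $M_{\alpha}$, takes an $\lcc(\kappa,\kappa^{+})$-chain $\vec{\mathfrak{B}}$ for $M_{\alpha}$, and at the club-many $\bar{\beta}<\kappa$ with $\mathfrak{B}_{\bar{\beta}}\cap\kappa=\bar{\beta}$ reads off $\clps(\mathfrak{B}_{\bar{\beta}})$, which is some $L_{\bar{\alpha}}[A\cap\bar{\alpha}]\in H_{\kappa}$; the $\Delta_{1}$ well-order then selects a canonical guess at $\bar{\beta}$ uniformly, and the persistence of $\Pi^{1}_{2}$ facts down to these collapsed, $L[A]$-correct structures turns this into a guessing sequence correct on a stationary subset of $S$. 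The implication $\Dl^{*}_{S}(\Pi^{1}_{2})\Rightarrow\diamondsuit(S)$ is the first-order instance and is recorded in \cite{FMR}.

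I expect the one genuinely delicate point to lie on the side of \cite{FMR}: one must verify that the data produced by Theorem~D (in the case $\theta=0$) literally meets the hypotheses of the sufficient condition for $\Dl^{*}_{S}(\Pi^{1}_{2})$ proved there — in particular that the filtration having the form $\langle L_{\beta}[A]\rangle$, together with a well-order of $H_{\kappa^{+}}$ that is $\Delta_{1}$ in a subset of $\kappa$, supplies precisely the $\Pi^{1}_{2}$-correctness at stationarily many levels that $\Dl^{*}_{S}(\Pi^{1}_{2})$ demands, whereas plain $\diamondsuit(S)$ would already follow from the raw guessing extracted from $\lcc(\kappa,\kappa^{+})$. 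One also has to be sure the iteration is genuinely set up so that Theorem~D's ``$\theta=0$'' conclusion is about the single forcing $\mathbb{P}_{\kappa,\kappa^{+}}$; the reduction in the first paragraph is then routine given Facts~\ref{Baum} and~\ref{Htheta}.
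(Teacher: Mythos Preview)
Your argument is correct, but it takes a longer route than the paper does. The paper's proof is a one-liner: it simply cites Theorem~D (for the given~$\theta$) together with \cite[Theorem~2.24]{FMR}. The point is that Theorem~D already asserts that \emph{in the full extension} $V^{\mathbb{P}_{\kappa,\kappa^{+\theta+1}}}$ one has $\lcc_{\reg}(\kappa,\kappa^{+\theta+1})$ --- in particular genuine $\lcc(\kappa,\kappa^{+})$, since every ordinal in $(\kappa,\kappa^{+})$ has regular cardinality --- together with the $\Delta_{1}$ well-order needed to feed into \cite[Theorem~2.24]{FMR}. So there is no need to factor the iteration and pass through the intermediate model $V^{\mathbb{P}_{\kappa,\kappa^{+}}}$.

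Your detour through the factorisation $\mathbb{P}_{\kappa,\kappa^{+\theta+1}}=\mathbb{P}_{\kappa,\kappa^{+}}*\dot{\mathbb{R}}$ and the $<\kappa^{+}$-closure of the tail is nonetheless sound: the verification that $\dot{\mathbb{R}}$ is $<\kappa^{+}$-directed closed via Fact~\ref{Baum} is correct, and since $H_{\kappa^{+}}$ (hence $\mathcal{P}(\kappa)$, all clubs in $\kappa$, and all second-order assertions over $H_{\kappa}$) is unchanged by $\dot{\mathbb{R}}$, any $\Dl^{*}_{S}(\Pi^{1}_{2})$-witness in the intermediate model persists to the full one. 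What your approach buys is that you only ever invoke Theorem~D in the base case $\theta=0$, where its statement is unambiguous (the parameter for the well-order is literally a subset of $\kappa$), and you make the upward preservation of $\Dl^{*}_{S}(\Pi^{1}_{2})$ explicit rather than hidden inside the general-$\theta$ statement of Theorem~D. The paper's approach buys brevity: once Theorem~D is established for arbitrary $\theta$, Corollary~D is immediate.
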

\begin{proof} Follows from Theorem~D and \cite[Theorem~2.24]{FMR}.
	\end{proof}

\section{acknowledgments}

The author is greatful to Assaf Rinot and Miguel Moreno for several discussions on local club condensation. 
The author thanks Liuzhen Wu and Peter Holy for discussions on how to force local club condensation, and Farmer Schlutzenberg and Martin Zeman for discussions on condensation properties of extender models. 

\bibliographystyle{alpha}

\end{document}